\renewcommand{\thesubfigure}{(\alph{subfigure})}
\newcommand{\QQ}{\mathbf{Q}}
\newcommand{\ZZ}{\mathbf{Z}}
\newcommand{\RR}{\mathbf{R}}
\newcommand{\CC}{\mathbf{C}}
\newcommand{\ds}{\displaystyle}
\newcommand{\ul}[1]{\underline{#1}}
\newcommand{\mf}[1]{\mathfrak{#1}}
\newcommand{\mc}[1]{\mathcal{#1}}
\newcommand{\gl}{\mathrm{GL}}
\renewcommand{\mod}[1]{\text{ }(\operatorname{mod}\text{ }#1)}
\renewcommand{\varepsilon}{\epsilon}
\DeclareMathOperator{\sgn}{sgn}
\DeclareMathOperator{\Gal}{Gal}
\DeclareMathOperator{\disc}{disc}
\DeclareMathOperator{\tr}{tr}
\newcommand{\vect}[1]{\begin{pmatrix}#1\end{pmatrix}}
\newtheorem{theorem}{Theorem}[section]
\newtheorem{theoremA}{Theorem}
\newtheorem{proposition}[theorem]{Proposition}
\newtheorem{lemma}[theorem]{Lemma}
\newtheorem{corollary}[theorem]{Corollary}
\theoremstyle{definition}
\newtheorem{remark}[theorem]{Remark}
\newtheorem{definition}[theorem]{Definition}
\numberwithin{equation}{section}
\newcommand{\conorm}[7]{!{(-100,0)}*{p}}
\newcommand{\nr}{{\mathrm{nr}}}
\newcommand{\sh}{\mathrm{sh}}
\newcommand{\lr}{\langle\,\cdot\,,\,\cdot\rangle}
\newcommand{\ga}{{\gcd}^\ast}
\renewcommand{\L}{{\mathcal{L}}}
\renewcommand{\C}{{\mathcal{C}}}
\newcommand{\W}{{\mathcal{W}}}
\renewcommand{\sf}{{\mathrm{sf}}}
\newcommand{\tame}{{\mathrm{tame}}}
\newcommand{\wild}{{\mathrm{wild}}}
\newcommand{\rrat}{{\mathrm{rrat}}}
\begin{document}

\title{T\MakeLowercase{he shapes of }G\MakeLowercase{alois quartic fields}}

\author{Piper H}
\address{
Department of Mathematics\\
Keller Hall\\
University of Hawai`i at M\={a}noa\\
Honolulu, HI 96822\\
USA
}
\email{piper@math.hawaii.edu}

\author{Robert Harron}
\address{
Department of Mathematics\\
Keller Hall\\
University of Hawai`i at M\={a}noa\\
Honolulu, HI 96822\\
USA
}
\email{rharron@math.hawaii.edu}
\thanks{The second author is partially supported by a Simons Collaboration Grant.}

\begin{abstract}
We determine the shapes of all degree $4$ number fields that are Galois. These lie in four infinite families depending on the Galois group and the tame versus wild ramification of the field. In the $V_4$ case, each family is a two-dimensional space of orthorhombic lattices and we show that the shapes are equidistributed, in a regularized sense, in these spaces as the discriminant goes to infinity (with respect to natural measures). We also show that the shape is a complete invariant in some natural families of $V_4$-quartic fields. For $C_4$-quartic fields, each family is a one-dimensional space of tetragonal lattices and the shapes make up a discrete subset of points in these spaces. We prove asymptotics for the number of fields with a given shape in this case.
\end{abstract}

\subjclass[2010]{11R16, 11R45, 11E12, 11P21}
\keywords{Quartic fields, equidistribution, lattices, carefree tuples}

\maketitle

 
 \tableofcontents
 
\section{Introduction}

The shape of a number field $K$ of degree $n$ is an equivalence class of lattices of rank $n-1$ (up to rotations, reflections, and scaling) that arises from the geometry of numbers. The study of this invariant began with the PhD thesis of David Terr (\cite{Terr}) in which it is shown that the shapes of both real and complex cubic fields are equidistributed (as the discriminant goes to infinity) in the space of shapes of rank $2$ lattices (i.e.\ the upper-half plane modulo the action of $\gl_2(\ZZ)$ by fractional linear transformations). Manjul Bhargava and the first author generalized this result to $S_4$-quartic and $S_5$-quintic fields in \cite{Manjul-Piper,PiperThesis}, conjecturing that such a `random' behaviour should hold for degree $n$ $S_n$-number fields for all $n$. On the other hand, also in \cite{Terr}, Terr shows that all Galois cubic fields have the same shape: hexagonal! The argument there is quite simple: the order $3$ automorphism of a Galois cubic field yields an order $3$ automorphism of its shape and the hexagonal lattice is the only rank $2$ lattice with an automorphism of order 3. This kind of argument drastically loses its strength when moving on to Galois quartic fields: there are infinitely many rank $3$ lattices containing an order $4$ automorphism or three order $2$ automorphisms. In this article, we determine the shapes of all Galois quartic fields showing that they lie in four infinite families depending on whether the Galois group is $C_4$ or $V_4$ and whether the field is tamely ramified or wildly ramified. This kind of Tame-Wild dichotomy was pointed out by the second author in \cite{PureCubicShapes} and also arises in \cite{ComplexCubics}. We also investigate how the shapes are distributed in these four families. As in \cite{PureCubicShapes,ComplexCubics}, the distribution of shapes in the $V_4$ case provides an explanation for the occurrence of log terms in the asymptotics of counting the number fields in question. We go into more detail now, considering each Galois group separately.

\subsection{Statement of results: $V_4$ case}

We will show in \S\ref{sec:V4_shapes} that the shape of a $V_4$-quartic field $K$ is an orthorhombic lattice, meaning that it can be described using a right rectangular prism. The field $K$ is determined by its three quadratic subfields $\QQ(\sqrt{\Delta_i})$ (with $\Delta_i$ a fundamental discriminant) and its shape is described by saying the ratios of the lengths of the sides of the prism are $\sqrt{|\Delta_1|}:\sqrt{|\Delta_2|}:\sqrt{|\Delta_3|}$ (see Theorem~\ref{thm:V4_main_theorem} below for more details). As a consequence, we obtain the following theorem saying that, within certain natural families of $V_4$-quartic fields, the shape determines the field.

\begin{theoremA}[Corollary~\ref{cor:V4_shape_complete_invariant} below]\mbox{}
	\begin{enumerate}
		\item The shape of a totally real $V_4$-quartic field determines it amongst the family of all totally real $V_4$-quartic fields.
		\item The shape of a tamely ramified $V_4$-quartic field determines it amongst the family of all tamely ramified $V_4$-quartic fields.
	\end{enumerate}
\end{theoremA}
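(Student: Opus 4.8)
The plan is to deduce the corollary from Theorem~\ref{thm:V4_main_theorem}. That theorem identifies the shape of a $V_4$-quartic field $K$ with the orthorhombic lattice whose side ratios are $\sqrt{|\Delta_1|}:\sqrt{|\Delta_2|}:\sqrt{|\Delta_3|}$; hence the shape determines, and is determined by, the unordered multiset $\{|\Delta_1|,|\Delta_2|,|\Delta_3|\}$ up to an overall positive scaling. On the other hand, $K$ is recovered from the set $\{\Delta_1,\Delta_2,\Delta_3\}$ of signed fundamental discriminants of its quadratic subfields, and this triple satisfies the $V_4$-compatibility condition that $\Delta_1\Delta_2\Delta_3$ be a perfect square (equivalently, the three quadratic characters form a group). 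Writing the squarefree parts as $\{bc,ca,ab\}$ for pairwise coprime $a,b,c$, one sees that each odd prime divides exactly $0$ or $2$ of the $\Delta_i$. So it suffices to prove that, within each of the two families, two fields whose discriminant multisets $\{|\Delta_i|\}$ are proportional must coincide.

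Next I would analyze the proportionality constant. Suppose $K,K'$ lie in the same family and $\{|\Delta_i|\}=\rho\,\{|\Delta_i'|\}$ as multisets for some $\rho\in\QQ_{>0}$ (matched by a bijection). For an odd prime $p$ we have $v_p(\Delta_i),v_p(\Delta_i')\in\{0,1\}$, and in each field an even number (namely $0$ or $2$) of the three discriminants are divisible by $p$. If $v_p(\rho)\geq 1$, then every $v_p(\Delta_i)$ would have to equal $1$, contradicting this parity; the reverse inequality is symmetric. Hence $v_p(\rho)=0$ for every odd $p$, so $\rho=2^{k}$ for some $k\in\ZZ$. This reduces everything to the prime $2$, where fundamentality forces $v_2(\Delta_i)\in\{0,2,3\}$ (the cases $\Delta_i$ odd, $\Delta_i=4d_i$ with $d_i\equiv 3\pmod 4$, and $\Delta_i=4d_i$ with $d_i\equiv 2\pmod 4$).

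I would then close each case. In the tame case (part (b)), $2$ is unramified, so every $v_2(\Delta_i)=v_2(\Delta_i')=0$, forcing $k=0$ and $\{|\Delta_i|\}=\{|\Delta_i'|\}$; the signs are then determined, because each squarefree part is $\equiv 1\pmod 4$, so $\Delta_i=|\Delta_i|$ when $|\Delta_i|\equiv 1$ and $\Delta_i=-|\Delta_i|$ when $|\Delta_i|\equiv 3\pmod 4$, recovering $\{\Delta_i\}$ and hence $K$. In the totally real case (part (a)) all $\Delta_i>0$, so the signs are already known and it remains to show $k=0$. Here the product condition is decisive: $\Delta_1\Delta_2\Delta_3$ and $\Delta_1'\Delta_2'\Delta_3'$ are both perfect squares, so $\sum_i v_2(\Delta_i)$ and $\sum_i v_2(\Delta_i)-3k$ are both even, whence $k$ is even. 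A short check of the per-prime constraint $v_2(\Delta_i),\,v_2(\Delta_i)-k\in\{0,2,3\}$ then eliminates all nonzero even values: up to swapping $K$ and $K'$, the case $k=2$ would force every $v_2(\Delta_i)=0$ and every $\Delta_i'=4\Delta_i$, but $4$ times an odd discriminant $\equiv 1\pmod 4$ is never fundamental. Thus $\rho=1$, the multisets agree, and $K=K'$.

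The main obstacle, as this outline suggests, is entirely at the prime $2$ in the totally real case, where wild ramification lets several subfields ramify at $2$ and the clean ``coprime product'' description of the $|\Delta_i|$ breaks down (for instance $\QQ(\sqrt2,\sqrt3)$ has $\{|\Delta_i|\}=\{8,12,24\}$). What saves the argument is that the dangerous rescaling $\Delta_i\mapsto 8\Delta_i$ — which does carry each individual odd fundamental discriminant to a fundamental discriminant — destroys the perfect-square condition on the product, so the rescaled triple is not the discriminant set of any $V_4$-field. I expect that pinning down this $2$-adic case analysis, together with confirming that Theorem~\ref{thm:V4_main_theorem} genuinely lets one read off $\{|\Delta_i|\}$ up to scaling (rather than some coarser invariant of the orthorhombic lattice), will be the only points requiring real care.
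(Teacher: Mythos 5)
Your proposal is correct, and it takes a genuinely different route from the paper's. The paper's proof of Corollary~\ref{cor:V4_shape_complete_invariant} is a direct reconstruction: from the shape one forms $(1,\Delta_2/\Delta_1,\Delta_3/\Delta_1)$, notes that in terms of the pairwise coprime $g_i$ these fractions ($g_1/g_2$, and $g_1/g_3$ or $g_1/(4g_3)$) are already in lowest terms, clears denominators to obtain the canonical primitive triple $(g_2g_3,g_1g_3,g_1g_2)$ or $(4g_2g_3,4g_1g_3,g_1g_2)$, and then a congruence-mod-$4$ inspection of that triple decides which of Williams' cases (i)--(iii) holds and hence recovers the three discriminants, i.e.\ the field. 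You instead compare two fields with the same shape and kill the proportionality constant: odd-prime parity gives $\rho=2^k$, the perfect-square condition on $\Delta_1\Delta_2\Delta_3$ gives $k$ even, the constraint $v_2(\Delta_i)\in\{0,2,3\}$ rules out $|k|\geq 4$, and fundamentality ($4d$ fundamental requires $d\equiv2,3\mod{4}$) rules out $k=\pm2$. Your version handles the delicate comparison in part (a) of a wild field against a tame one by the same $2$-adic bookkeeping, with no appeal to Williams' case distinction, whereas the paper's version has the virtue of being an effective recipe that computes the field from its shape. The two points you left open are indeed available from the paper: the perfect-square condition follows at once from $(g_1g_2g_3)^2=D_1D_2D_3$ and Theorem~\ref{thm:Williams}; and the shape does determine the ratio triple unambiguously, since the propositions of \S\ref{sec:lattices} show that $\sh_{oC}(x,y)$ and $\sh_{oI}(x,y)$ each determine $(x,y)$, while a wild (base-centered) shape can never coincide with a tame (body-centered) one because Theorem~\ref{thm:V4_main_theorem} assigns them distinct combinatorial types (hexagonal prism or cuboid versus truncated octahedron or rhombo-hexagonal dodecahedron).
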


\begin{remark}\mbox{}
	\begin{enumerate}
		\item Note that the \textit{discriminant} of a totally real $V_4$-quartic field is not a complete invariant. For instance, $\QQ(\sqrt{10},\sqrt{13})$ and $\QQ(\sqrt{10},\sqrt{26})$ both have discriminant $2^6\cdot5^2\cdot13^2$, but are not isomorphic. Their shapes are however distinct.
		\item This result is complementary to a recent result of Carlos Revera-Guaca and Guillermo Mantilla-Soler that says e.g.\ that in the family of totally real quartic fields with fundamental discriminant the shape is a complete invariant \cite[Theorem~2.12]{RGMS}. The discriminants of $V_4$-quartic fields are never fundamental.
		\item If $D_1\equiv D_2\equiv2\mod{4}$ (with $D_i>0$ squarefree), then $\QQ(\sqrt{D_1},\sqrt{D_2})$ and $\QQ(\sqrt{-D_1},\sqrt{-D_2})$ have the same shape, as do $\QQ(\sqrt{-D_1},\sqrt{D_2})$ and $\QQ(\sqrt{D_1},\sqrt{-D_2})$.
		\item The second author has shown in \cite{ComplexCubics} that the shape of a complex cubic field determines that field within the family of all cubic fields. In that case, the shape is a two-dimensional lattice and, as such, is given by a point in the complex upper-half plane. The complex cubic field is then obtained by adjoining to $\QQ$ a coordinate of the shape. This is similar to what is happening here since we can think of $\sqrt{|\Delta_i|}$ as coordinates describing the shape of the $V_4$-quartic.
	\end{enumerate}
\end{remark}

We have the following further example of the Tame-Wild dichotomy extending what was pointed out in \cite{PureCubicShapes,ComplexCubics}. Note that a Galois quartic field is wildly ramified if and only if $2$ ramifies.
\begin{theoremA}\label{thm:V4TameWild}
	The shape of a $V_4$-quartic field $K$ lies in one of two spaces based upon whether $2$ ramifies. When $2$ ramifies in $K$, its shape lies in the family $\mc{S}_{oC}$ of base-centered orthorhombic lattices. Otherwise, the shape is in the family $\mc{S}_{oI}$ of body-centered orthorhombic lattices.
\end{theoremA}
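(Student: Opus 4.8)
The plan is to compute the shape directly from an explicit integral basis of the ring of integers $\mathcal{O}_K$ and read off which family of orthorhombic lattices it lands in. I would begin by fixing the three quadratic subfields $\QQ(\sqrt{\Delta_i})$ with fundamental discriminants $\Delta_i$, so that $K = \QQ(\sqrt{\Delta_1},\sqrt{\Delta_2})$, and recall the standard structure theory for the ring of integers of a biquadratic field. The key arithmetic input is that a Galois quartic field is wildly ramified precisely when $2$ ramifies, and this is governed by the congruence classes of the $\Delta_i$ modulo $4$ (equivalently, by whether the squarefree radicands are $\equiv 1 \mod 4$ or $\equiv 2,3 \mod 4$). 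I would split into cases accordingly: the tame case, where $2$ is unramified, versus the wild case, where $2$ ramifies. In each case one writes down an explicit $\ZZ$-basis of $\mathcal{O}_K$ (for example using elements of the form $\tfrac{1+\sqrt{\Delta_i}}{2}$ where appropriate), projects onto the trace-zero subspace to get a rank-$3$ lattice, and computes the Gram matrix with respect to the trace form.

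The heart of the computation is that, after this projection, the Gram matrix is diagonal in the basis coming from the $\sqrt{\Delta_i}$ (because distinct square roots are orthogonal under the trace pairing, as the nontrivial automorphisms kill the cross terms), which is exactly why the lattice is orthorhombic with side ratios $\sqrt{|\Delta_1|}:\sqrt{|\Delta_2|}:\sqrt{|\Delta_3|}$ as asserted in Theorem~\ref{thm:V4_main_theorem}. The distinction between $\mc{S}_{oC}$ and $\mc{S}_{oI}$ then comes from the \emph{centering} of the lattice: the full ring of integers is not just the diagonal orthogonal lattice spanned by the $\sqrt{\Delta_i}$, but contains extra glue vectors coming from the half-integral elements $\tfrac{1+\sqrt{\Delta_i}}{2}$ and their products that lie in $\mathcal{O}_K$. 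I would carefully determine which half-sum vectors are integral in each case: in the wild (tame) case the pattern of admissible half-sums produces a base-centered (resp. body-centered) orthorhombic lattice. Concretely, I would identify the index-$2$ or index-$4$ overlattice of $\sqrt{\Delta_1}\ZZ \oplus \sqrt{\Delta_2}\ZZ \oplus \sqrt{\Delta_3}\ZZ$ generated inside $\mathcal{O}_K$ and match its centering type against the standard definitions of $\mc{S}_{oC}$ and $\mc{S}_{oI}$.

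The main obstacle I anticipate is the bookkeeping of the integral basis across the several sub-cases determined by the residues of the radicands modulo $4$, together with correctly identifying the centering vector. It is easy to write down \emph{an} orthogonal sublattice, but pinning down exactly which glue vectors belong to $\mathcal{O}_K$ — and hence whether the resulting centered lattice is of type $oC$ versus $oI$ — requires a careful analysis of the $2$-adic behavior, which is precisely where the tame/wild split manifests. I would therefore isolate this centering computation as the crux, likely handling it by explicitly checking the trace and norm of the candidate half-sum vectors to verify integrality, and then invoking the standardized description of orthorhombic Bravais classes to name the family. Since the side-length ratios and the orthogonality are already furnished by Theorem~\ref{thm:V4_main_theorem}, the remaining content of Theorem~\ref{thm:V4TameWild} is exactly this centering dichotomy, so I would structure the proof to reduce immediately to that point.
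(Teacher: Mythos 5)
Your plan is correct, and its skeleton is the same as the paper's: \S\ref{sec:V4_shapes} also starts from Williams' explicit integral bases (Theorem~\ref{thm:Williams}, whose three cases are exactly your congruence split, with cases (i)--(ii) wild and case (iii) tame), applies the perp map $\alpha\mapsto4\alpha-\tr(\alpha)$ to get $\mc{O}_K^\perp$, and uses the orthogonality $\langle j(\sqrt{D_i}),j(\sqrt{D_k})\rangle=4|D_i|\delta_{ik}$ (Lemma~\ref{lem:usefulcomputations}) as the key computational input. Where you genuinely diverge is the identification step. The paper does not argue via glue vectors over the orthogonal sublattice spanned by the $j(\sqrt{D_i})$; instead, in each case it exhibits an explicit \emph{obtuse superbase} of $\mc{O}_K^\perp$, computes its Gram matrix, and matches the resulting conorm diagram against the diagrams of the $oC$ and $oI$ families computed in Propositions~\ref{prop:baseconorm} and \ref{prop:bodyconorm} (Conway--Sloane). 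Your centering analysis is more elementary and does work: e.g.\ in case (i), $\mc{O}_K^\perp$ is generated by $4\sqrt{D_1}$, $4\sqrt{D_3}$, and $2(\sqrt{D_1}+\sqrt{D_2})$, visibly the orthogonal lattice on $4\sqrt{D_1},4\sqrt{D_2},4\sqrt{D_3}$ plus a face-centering vector, while in case (iii) the extra generator $\epsilon\sqrt{D_1}+\sqrt{D_2}+\sqrt{D_3}$ is precisely the body-centering vector. Your route even avoids a wrinkle the paper must handle: in case (iii) the paper needs two different obtuse superbases (hence two integral bases) according to whether $|D_1|+|D_2|$ is less or greater than $|D_3|$, since obtuseness fails otherwise; centering is insensitive to this. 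What the conorm formalism buys in exchange is uniformity with the rest of the article: it is the same machinery that proves the parametrizations $\sh_{oC}$ and $\sh_{oI}$ are bijections (needed for the equidistribution results) and that cleanly handles the degenerate subcases (hexagonal $b=\sqrt{3}a$, tetragonal $a=b$, and the two combinatorial types of $oI$), which Theorem~\ref{thm:V4_main_theorem} records but Theorem~\ref{thm:V4TameWild} does not need.

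One caution on logical structure: as phrased, your final reduction (``the side ratios and orthogonality are already furnished by Theorem~\ref{thm:V4_main_theorem}'') is circular, since Theorem~\ref{thm:V4TameWild} is exactly the coarse part of Theorem~\ref{thm:V4_main_theorem}, including the $oC$/$oI$ assignment; there is no independent proof of the latter to quote. This is only presentational, though: the trace-pairing computation you describe proves the orthogonality and the side ratios directly, so your argument is self-contained once you drop that citation and keep the computation.
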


Once we break up the fields according to being tame or wild, we can ask whether the shapes are ``random'' in the spaces $\mc{S}_{oC}$ and $\mc{S}_{oI}$, respectively (endowed with natural measures $\mu_{oC}$ and $\mu_{oI}$, respectively). We will show that this is the case. As in \cite{PureCubicShapes}, the spaces $\mc{S}_{oC}$ and $\mc{S}_{oI}$ have infinite measure and the asymptotics for counting these fields have log terms. We must therefore ``regularize'' our notion of equidistribution in a similar way. We prove the following result in \S\ref{sec:V4equid}.

\begin{theoremA}\label{thm:V4equid}
	The shapes of $V_4$-quartic fields are equidistributed, in a regularized sense, within the two-dimensional space in which they live. Specifically, let
	\[
		C_\wild=\frac{5}{48}\prod_{p\text{ odd}}\left(1-6p^{-2}+8p^{-3}-3p^{-4}\right)
	\]
	and
	\[
		C_\tame=\frac{1}{6}\prod_{p\text{ odd}}\left(1-6p^{-2}+8p^{-3}-3p^{-4}\right),
	\]
	where each infinite product is over all odd primes. If $W$ is a compact $\mu_{oC}$- or $\mu_{oI}$-continuity set,\footnote{Recall that a $\mu$-\textit{continuity set} for a measure $\mu$ is a measurable set whose boundary has measure $0$.} respectively, then
	\[
		\lim_{X\rightarrow\infty}\dfrac{N_\wild(X,W)}{X^{1/2}}=C_\wild\mu_{oC}(W)
	\]
	and
	\[
		\lim_{X\rightarrow\infty}\dfrac{N_\tame(X,W)}{X^{1/2}}=C_\tame\mu_{oI}(W),
	\]
	where $N_\wild(X,W)$ (resp.\ $N_\tame(X,W)$) denotes the number of $V_4$-quartic fields with discriminant bounded by $X$, shape in $W$, that are wildly (resp.\ tamely) ramified.
\end{theoremA}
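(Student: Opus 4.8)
The plan is to reduce the equidistribution statement to a lattice-point count governed by the region carved out by the discriminant bound and the shape constraint, extracting the constants from a $2$-adic local density together with an Euler product over odd primes. First I would set up the parametrization. By Theorem~\ref{thm:V4_main_theorem} a $V_4$-quartic field $K$ is determined by an unordered triple of fundamental discriminants $\{\Delta_1,\Delta_2,\Delta_3\}$ whose product is a perfect square, with $|\disc K|=|\Delta_1||\Delta_2||\Delta_3|$ and shape the orthorhombic lattice with side ratios $\sqrt{|\Delta_1|}:\sqrt{|\Delta_2|}:\sqrt{|\Delta_3|}$. Writing the three square classes as $uv$, $uw$, $vw$ for pairwise coprime squarefree integers $u,v,w$ (with a sign pattern having an even number of negatives, as forced by the $V_4$ condition), one has $|\Delta_1||\Delta_2||\Delta_3|=(uvw)^2$ in the tame case and the same up to a bounded power of $2$ in the wild case. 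Hence $|\disc K|\le X$ becomes $uvw\le c\,X^{1/2}$ for an explicit constant $c$ depending only on the $2$-adic type; this is the source of the $X^{1/2}$. By Theorem~\ref{thm:V4TameWild}, the tame fields ($2$ unramified, all $\Delta_i\equiv 1\mod{4}$) land in $\mc{S}_{oI}$ and the wild fields in $\mc{S}_{oC}$, so I would treat the two cases separately, the tame case fixing $u,v,w$ odd and the wild case running over the finitely many admissible $2$-adic congruence/sign classes.

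Next I would translate the shape condition. Since the shape depends only on the ratios of the $\sqrt{|\Delta_i|}$, and these are fixed by the ratios $u:v:w$, ``shape in $W$'' is equivalent to the direction of $(u,v,w)$ lying in a cone $\Omega_W$, the preimage of $W$ under the explicit coordinatization of the shape space in Theorem~\ref{thm:V4_main_theorem}. Thus $N_\tame(X,W)$ (and likewise $N_\wild$) equals, up to the symmetry factor from the $3!$ orderings and the sign choices, the number of pairwise coprime squarefree triples $(u,v,w)$ of the prescribed $2$-adic type satisfying $uvw\le c\,X^{1/2}$ and $(u:v:w)\in\Omega_W$. I would count these by inclusion–exclusion: encode squarefreeness by $\sum_{d^2\mid\cdot}\mu(d)$ and pairwise coprimality by Möbius sums over common divisors, reducing to counting integer points of various sublattices inside the scaled, angularly restricted region $\mc{R}(T)=\{(u,v,w):uvw\le T,\ (u:v:w)\in\Omega_W\}$ with $T=c\,X^{1/2}$.

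The main term comes from the volume of $\mc{R}(T)$. Because $\mc{R}$ is a cone truncated by $uvw\le T$, the scaling $(u,v,w)\mapsto\lambda(u,v,w)$ multiplies $uvw$ by $\lambda^3$, so $\mathrm{vol}\,\mc{R}(T)$ is linear in $T$; I would perform the Jacobian computation to verify that its angular factor is a fixed multiple of $\mu_{oI}(W)$ (resp.\ $\mu_{oC}(W)$). Here compactness of $W$ is essential, as it keeps $\Omega_W$ bounded away from the coordinate planes so that the volume is finite—the non-compact directions are precisely what produce the $\log X$ in the total count. Assembling the Möbius sums, the local density at a prime $p$ of being squarefree and pairwise coprime (at most one valuation nonzero, and that one equal to $1$) is
\[
	(1-p^{-1})^3\left(1+3p^{-1}\right)=1-6p^{-2}+8p^{-3}-3p^{-4},
\]
which over the odd primes yields exactly the Euler product appearing in $C_\wild$ and $C_\tame$, while the prime $2$ contributes a separate local factor—different in the two cases—that combines with the scaling constant $c$ and the symmetry factors to produce $\tfrac{5}{48}$ (wild) and $\tfrac16$ (tame). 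The continuity-set hypothesis guarantees $\partial\Omega_W$ has measure zero, so the count of integer points in $\mc{R}(T)$ equals its volume up to $o(T)$, giving the claimed limits.

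The step I expect to be the main obstacle is the $2$-adic analysis in the wild case: pinning down exactly which congruence classes of $(u,v,w)$ modulo powers of $2$, together with which sign patterns, correspond to genuine fundamental-discriminant triples for which $2$ ramifies, and then computing the resulting local density to obtain $\tfrac{5}{48}$. Organizing these sub-cases, and confirming that the shape lands in the base-centered family $\mc{S}_{oC}$ uniformly across them, requires care, whereas the odd-prime density and the volume computation are comparatively routine.
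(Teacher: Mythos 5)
Your strategy coincides with the paper's: the coprime squarefree triples $(u,v,w)$ you introduce are exactly the paper's gcd-triples $(g_1,g_2,g_3)$ with $D_k=g_ig_j$; the discriminant bound becomes $|g_1g_2g_3|\le cX^{1/2}$; the shape constraint becomes a cone condition on the ratios $|g_3/g_1|,|g_3/g_2|$; and your odd-prime density $(1-p^{-1})^3(1+3p^{-1})=1-6p^{-2}+8p^{-3}-3p^{-4}$ agrees exactly with the paper's count $\#\C_p^{\sf}=6p^4-8p^3+3p^2$ of excluded classes modulo $p^2$. The only technical differences are that the paper runs the local conditions through the Principle of Lipschitz applied to congruence classes modulo $\prod_{p\le Y}p^2$ and then a tail sieve (your M\"obius inversion needs the same tail control, since infinitely many local conditions are being imposed), and that it reduces general compact continuity sets to the boxes $W_{oC}(R_1,R_2)$, $W_{oI}(R_1,R_2)$ by step-function approximation (Lemma~\ref{lem:WoCsuffice}) rather than invoking Jordan measurability of the cone directly. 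Both of those substitutions are harmless.

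The genuine gap is the part you deferred, and it is not merely a matter of ``care'': it is where the constants in the theorem statement are actually produced, so as written the values $\tfrac{5}{48}$ and $\tfrac16$ are unverified. The paper settles it using Williams's classification (Theorem~\ref{thm:Williams}) of the $2$-adic structure into case (i) $\{D_i\}\equiv\{2,2,3\}\mod 4$, case (ii) $\{1,2,2\}$ or $\{1,3,3\}\mod 4$, and case (iii) $\{1,1,1\}\mod 4$, the wild family being the union of cases (i) and (ii). Two inputs vary between these two wild sub-cases: the power of $2$ in the discriminant ($\Delta_K=2^6(g_1g_2g_3)^2$ versus $2^4(g_1g_2g_3)^2$, so your constant $c$ is $2^{-3}$ in one sub-case and $2^{-2}$ in the other), and the number of admissible congruence classes of $(g_1,g_2,g_3)$ modulo $4$ (two classes, $(1,3,2)$ and $(3,1,2)$, in case (i); four in case (ii)). The shape normalization also differs (in case (ii) one has $\Delta_{1,2}=4D_{1,2}$ but $\Delta_3=D_3$), though this happens to cancel because $\mu_{oC}$ is scale-invariant. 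Consequently there is no single ``local density at $2$'' multiplying a common main term: the wild count is a sum of two differently weighted contributions, $\left(\tfrac{1}{96}+\tfrac{1}{24}\right)X^{1/2}\left(\log R_2-\log R_1\right)^2=\tfrac{5}{96}\cdot 2\,\mu_{oC}(W)\,X^{1/2}$ (times the odd Euler product), which is how $\tfrac{5}{48}$ arises. Your framework does allow for this, since you propose summing over ``admissible $2$-adic congruence/sign classes,'' but until that case analysis is pinned down and the sum executed, the proof is incomplete precisely at the point that distinguishes the theorem's constants from a generic constant.
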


\begin{remark}\mbox{}
	\begin{enumerate}
		\item Andrew Baily showed in \cite{Baily} that the number of $V_4$-quartic fields with discriminant bounded by $X$ grows like
		\[
			X^{1/2}\log^2(X).
		\]
		The usual notion of equidistribution would have the denominator in the limits above be this $X^{1/2}\log^2(X)$. We say ``in a regularized sense'' to indicate that we have modified this denominator.
		As in \cite{PureCubicShapes}, we show that requiring the fields to have shape in some \textit{compact} set removes the log factors. Furthermore, we can ``see'' both of the log factors in the space of shapes: if $W$ is a ``box'' constraining the (two) shape parameters to lie between $1$ and $R$, then our results shows that the number of fields grows like
		\[
			X^{1/2}\log^2(R).
		\]
		This seems to indicate that we might be able to better understand log terms in the asymptotics of counts of number fields if we understand the spaces in which their shapes live. We refer to \cite[\S1.3]{ComplexCubics} for further discussion of this phenomenon.
		\item We may phrase this result in terms of weak convergence of measures as in \cite[\S3.1]{PureCubicShapes}.
		\item We prove these results by parametrizing the $V_4$-quartic fields in question using strongly carefree triples satisfying certain congruence conditions and lying in some region. We use the Principle of Lipschitz and a sieve to count these triples.
		\item Our results on the determination and equidistribution of shapes of $V_4$-quartic fields is generalized to totally real tame $C_2^3$-octic fields in the PhD thesis of Jamal Hassan.
	\end{enumerate}
\end{remark}

\subsection{Statement of results: $C_4$ case}
In \S\ref{sec:C4fields_description}, we show that the shapes of $C_4$-quartic fields are tetragonal lattices, i.e.\ they can be described by a right rectangular prism with square base. The ratio of the height to the side length of the base, which we call the \textit{side ratio}, is given by an interesting ramification invariant, as follows. A $C_4$-quartic field $K$ has a unique quadratic subfield $K_2$. Let $\Delta_2$ denote its discriminant and let $\mc{N}$ denote the absolute norm of the relative discriminant of $K/K_2$. Let
\[
	\rrat_K:=\frac{\mc{N}}{|\Delta_2|},
\]
which we will call the \textit{ramification ratio} of $K$. Note that every prime that ramifies in $K_2$ must ramify in $K$,\footnote{Indeed, the inertia field of a prime ramified in $K$ can only be $K_2$ or $\QQ$, so that if it ramifies in $K_2$, its ramification index must be $4$.} so that $\rrat_K$ is a positive integer. In fact, $\rrat_K=(2^eA)^2$ where $0\leq e\leq3$ and $A$ can be any squarefree, odd integer ($A$ is the product of the odd primes that ramify in $K$, but not in $K_2$). The ramification ratio of $K$ dictates the shape of $K$ as given in the following theorem (see Theorem~\ref{thm:C4_main_theorem} for a more precise statement).
\begin{theoremA}
	The shape of a $C_4$-quartic field $K$ lies in one of two families depending on whether $2$ ramifies. When $2$ ramifies in $K$, the shape is a primitive tetragonal lattice with side ratio $\sqrt{2}\cdot\rrat_K^{-1/4}$. Otherwise, the shape is a body-centered tetragonal lattice with side ratio $\rrat^{-1/4}$.
\end{theoremA}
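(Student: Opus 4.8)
The plan is to exploit the order-four automorphism generating $\Gal(K/\QQ)\cong C_4$ as an isometry of the Minkowski lattice, and then to make a few metric measurements that pin the shape down. Let $\sigma$ generate $\Gal(K/\QQ)$ and write $K_2=K^{\langle\sigma^2\rangle}$, a real quadratic field, say $K_2=\QQ(\sqrt d)$ with $d>0$ squarefree. Since $\sigma$ permutes the archimedean embeddings of $K$, it acts as an isometry of $K\otimes\RR$ fixing the line $\RR\cdot 1$, hence acts isometrically on the trace-zero hyperplane where the shape lives. On $K\otimes\CC$ the eigenvalues of $\sigma$ are the fourth roots of unity, so on the trace-zero part they are $i,-1,-i$, whose product is $-1$; thus $\sigma$ acts there as an order-four \emph{rotoreflection}. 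I would first record that this forces the shape to be tetragonal: the real $(-1)$-eigenline (the \emph{axis}) is orthogonal to the real $2$-plane spanned by the $\pm i$-eigenvectors (the \emph{base}), and $\sigma$ rotates the base by a quarter turn, so the primitive sublattice of the base plane, being a rank-two lattice with an order-four rotational symmetry, is a square lattice. The only remaining freedom is the Bravais centering, which for tetragonal lattices is either primitive or body-centered.

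Next I would compute two invariants. First, the axis is exactly the trace-zero line of $K_2$: an integer of $K$ whose trace-zero projection lies on the axis must lie in $\mc{O}_{K_2}$, and a short computation in both residue cases ($d\equiv1$ and $d\equiv 2,3\pmod 4$) shows that the primitive axis vector $v_c$ has $\langle v_c,v_c\rangle=|\Delta_2|$; note that $\sqrt d$ maps to a \emph{real} number under every embedding of $K$, so this holds whether $K$ is totally real or totally complex. Second, since $\langle 1,1\rangle=[K:\QQ]=4$, the covolume of the shape lattice is $\tfrac12\sqrt{|\disc K|}$, and by the relative discriminant formula $\disc K=\mc{N}\,\Delta_2^2$ this equals $\tfrac12\sqrt{\mc{N}}\,|\Delta_2|$. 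Writing $c=|v_c|=\sqrt{|\Delta_2|}$ and letting $a$ be the side of the square base, the covolume identity (which reads $a^2c$ in the primitive case and $a^2c/2$ in the body-centered case) determines $a$, and hence the side ratio $c/a$, once the centering is known.

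The crux, and the main obstacle, is therefore to determine the centering, equivalently whether the body-center glue vector $\tfrac12(v_c+w)$, with $w$ a diagonal of the base square, lies in the projected lattice. This is a purely $2$-adic question about the module structure of $\mc{O}_K$, and is precisely where the tame--wild dichotomy enters: I would analyze $\mc{O}_K\otimes\ZZ_2$ using explicit integral bases for cyclic quartic fields split according to the ramification of $2$, expecting to show that the glue vector is integral exactly when $2$ is unramified (the tame case), yielding a body-centered lattice, whereas in the wild case no such vector exists and the lattice is primitive. Feeding each outcome into the covolume identity gives $a=(\mc{N}|\Delta_2|)^{1/4}$ in the tame case and $a=(\mc{N}|\Delta_2|)^{1/4}/\sqrt2$ in the wild case; since $c=\sqrt{|\Delta_2|}$, the side ratio $c/a$ comes out to $\rrat_K^{-1/4}$ and $\sqrt2\,\rrat_K^{-1/4}$ respectively, the extra $\sqrt2$ being exactly the factor produced by halving the covolume in the body-centered case. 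Verifying this $2$-adic integral structure, and thereby the centering, is the step I expect to require the most care.
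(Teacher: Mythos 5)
Your structural framework is correct and genuinely different from the paper's, but it has a genuine gap, and the gap sits exactly on the theorem's content: you never determine the centering. The symmetry argument (the generator $\sigma$ acts on the trace-zero space with eigenvalues $i,-1,-i$, hence as an order-four rotoreflection, forcing a tetragonal lattice that is either primitive or body-centered) is sound, and your two metric invariants are correct: the primitive axis vector satisfies $\langle v_c,v_c\rangle=|\Delta_2|$, and the projected lattice has covolume $\frac{1}{2}\sqrt{\mc{N}}\,|\Delta_2|$, so each centering hypothesis does reproduce the corresponding claimed side ratio (I checked this against the paper's case-by-case values and the arithmetic is consistent). However, the assertion ``body-centered if and only if $2$ is unramified'' \emph{is} the tame--wild dichotomy the theorem asserts; as written, your argument only shows the shape is tetragonal with side ratio $\sqrt{2}\cdot\rrat_K^{-1/4}$ or $\rrat_K^{-1/4}$ according to an undecided binary choice, and you explicitly defer that decision (``expecting to show\dots'', ``the step I expect to require the most care''). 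Nothing in your computations can decide it: the covolume identity is used to solve for $a$ \emph{after} the centering is assumed, so both outcomes are consistent with everything you have established. Deciding whether the glue vector $\frac{1}{2}(v_c+w)$ lies in the projected lattice requires knowing $\mc{O}_K$ integrally at $2$, and that work is absent.

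For comparison, this deferred step is essentially the entirety of the paper's proof: it uses the parametrization $K=\QQ\bigl(\sqrt{A(D+B\sqrt{D})}\bigr)$ with $D=B^2+C^2$, splits into five cases according to the behavior of $2$ (congruence conditions on $A,B,C,D$ modulo $4$), invokes the known integral bases of Hudson--Williams and Spearman--Williams in each case, and exhibits an explicit obtuse superbase of $\mc{O}_K^\perp$ whose Conway--Sloane conorm diagram identifies the lattice as $tP$ or $tI$ with the stated parameters. If you carried out your plan, your $2$-adic verification would involve the same kind of case-by-case integral-basis computation, though your framework would need less from each case: you only have to test a single vector for membership in the lattice, rather than produce a full superbase and conorm diagram, and your symmetry argument explains a priori why the shape must be tetragonal, which the paper only sees a posteriori from its Gram matrices. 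So the approach is attractive and would very likely succeed, but as it stands the central claim is conjectured, not proved.
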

\begin{remark}\mbox{}
\begin{enumerate}
\item Since the discriminant of a $C_4$-quartic field is of the form $2^fA^2D^3$, where $D$ is squarefree and relatively prime to $A$, and $\rrat_K=(2^eA)^2$ and $e$ is determined by $f$, the discriminant of a $C_4$ field determines its shape.
	\item In a recent preprint, Wilmar Bola\~{n}os and Mantilla-Soler compute a Gram matrix for the trace form of any tame cyclic field (of arbitrary degree). In particular, a Gram matrix representing the shape of a totally real tame $C_4$-quartic field can be obtained from \cite[Corollary~3.11]{BMS}.
\end{enumerate}
\end{remark}

Since $\rrat_K$ is an integer, these shapes yield discrete sets of points in the spaces of tetragonal lattices. As such they are not dense, let alone equidistributed. On the other hand, we are able to count how many fields have a given shape. There are infinitely many fields with a given shape so we provide asymptotics for the number of such fields with bounded discriminant.
\begin{theoremA}
	Let $A$ be a squarefree, odd integer and for primes $p$, let
	\[
		f_A(p)=\begin{cases}
					2&\text{if }p\equiv1\mod{4}\text{ and }p\nmid A,\\
					0&\text{otherwise.}
				\end{cases}
	\]
	Let
	\[
		C_{\Sigma_A}:=\prod_{p\text{ prime}}\left(1-\frac{f_A(p)}{p}\right)\left(1-\frac{1}{p}\right).
	\]
	\begin{enumerate}
		\item Let $N_\tame(X;A)$ be the number of tamely ramified $C_4$-quartic fields $K$ with $\rrat_K=A^2$, ${\Delta_K\leq X}$, and that are totally real or totally imaginary according whether $A>0$ or not. Then,
		\[
			N_\tame(X;A)=\frac{C_{\Sigma_A}}{2^2\cdot\rrat_K^{1/3}}X^{1/3}+o(X^{1/3}).
		\]
		\item Let $N_\wild(X;A,e)$ (for $e=1,2$, or $3$) be the number of wildly ramified $C_4$-quartic fields $K$ with $\rrat_K=(2^eA)^2$, ${\Delta_K\leq X}$, and that are totally real or totally imaginary according whether $A>0$ or not. Then,
		\[
			N_\wild(X;A,e)=\frac{C_{\Sigma_A}}{2^{2^{3-e}}\cdot\rrat_K^{1/3}}X^{1/3}+o(X^{1/3}).
		\]
	\end{enumerate}
\end{theoremA}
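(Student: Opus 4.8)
The plan is to parametrize the $C_4$-quartic fields by conjugate pairs of primitive quartic Dirichlet characters and to reduce each of the two counts to a weighted sum of a multiplicative function over squarefree integers, which I then estimate by a Tauberian argument. Recall from \S\ref{sec:C4fields_description} that a $C_4$-quartic field $K$ corresponds to a pair $\{\chi,\ol\chi\}$ of primitive quartic characters, where $\chi^2=\chi_{\Delta_2}$ is the quadratic character cutting out the subfield $K_2=\QQ(\sqrt{\Delta_2})$. By the conductor-discriminant formula $|\Delta_K|=|\Delta_2|\,\mf f_\chi^2$, while the tower formula gives $|\Delta_K|=|\Delta_2|^2\mc N$, so that $\rrat_K=(\mf f_\chi/|\Delta_2|)^2$ and hence $\Delta_K=|\Delta_2|^3\,\rrat_K$. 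Fixing $\rrat_K$, the constraint $\Delta_K\le X$ is therefore equivalent to $|\Delta_2|\le(X/\rrat_K)^{1/3}=:Y$. Writing $D$ for the odd part of $\Delta_2$, the classical criterion for $\QQ(\sqrt{\Delta_2})$ to embed in a cyclic quartic forces $D$ to be a squarefree product of primes $p\equiv1\mod{4}$, coprime to $A$.

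Next I would compute, for fixed $\rrat_K$, the number of admissible $\chi$ over a fixed subfield $K_2$. In the tame case $\chi$ is unramified at $2$, has one of the two order-$4$ local characters at each $p\mid D$, and the unique order-$2$ character at each $p\mid A$; this gives $2^{\omega(D)}$ characters, and since $\chi$ has order $4$ the conjugation $\chi\leftrightarrow\ol\chi$ is fixed-point-free, yielding $2^{\omega(D)-1}$ fields. The signature is governed by $\chi(-1)$, which turns out to be a fixed parity condition on the primes dividing $D$ (and those dividing $A$); the sign of $A$ records whether this selects the totally real or totally imaginary family. Thus
\[
	N_\tame(X;A)=\sum_{\substack{D\le Y,\ D\text{ squarefree}\\ p\mid D\Rightarrow p\equiv1\mod{4},\ (D,A)=1\\ \text{signature condition}}}2^{\omega(D)-1}.
\]

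The analytic heart is the asymptotic $\sum_{D\le Y}2^{\omega(D)}\sim c\,Y$ over $D$ as above (ignoring the signature condition for the moment). The associated Dirichlet series $F(s)=\prod_{p\equiv1(4),\,p\nmid A}(1+2p^{-s})$ has a simple pole at $s=1$, as one sees by comparison with $\zeta(s)L(s,\chi_{-4})$ (the weight $2$ on primes of density $1/2$ produces exactly a first-order pole); a Wiener--Ikehara Tauberian theorem then gives the linear asymptotic with $c$ equal to the residue, which one identifies with the constant $C_{\Sigma_A}$. The signature condition is detected by $\chi_8$ on the relevant primes, and the corresponding twisted Dirichlet series $\prod_p(1+2\chi_8(p)p^{-s})$ is holomorphic at $s=1$; hence imposing it halves the main term with an error of size $o(Y)$. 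Combining the conjugation factor $\tfrac12$, the signature factor $\tfrac12$, and $Y=(X/\rrat_K)^{1/3}$ yields $N_\tame(X;A)=\frac{C_{\Sigma_A}}{2^2\,\rrat_K^{1/3}}X^{1/3}+o(X^{1/3})$.

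For the wild case the skeleton is identical: the sum over the odd part $D$ is unchanged, so the same Euler product $C_{\Sigma_A}$ and the same power $X^{1/3}$ appear, and only the local analysis at $2$ differs. There the $2$-part of $\Delta_2$ and the set of admissible $2$-adic quartic local characters depend on $e$, and together with the signature constraint at $2$ they produce the prefactor $2^{-2^{3-e}}$. \emph{The main obstacle is this finite but delicate $2$-adic character bookkeeping}, which must be carried out for each $e\in\{1,2,3\}$ to produce $2^{2^{3-e}}$, together with verifying that the residue computation returns precisely the stated constant $C_{\Sigma_A}$; by contrast, the odd-prime count and the Tauberian estimate are uniform across all four cases.
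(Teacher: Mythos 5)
Your part (a) is correct in outline and takes a genuinely different route from the paper's. The paper never mentions characters: it invokes the parametrization $K=\QQ\bigl(\sqrt{A(D+B\sqrt{D})}\bigr)$, $D=B^2+C^2$, of \cite{HHRWH}, so that counting fields with fixed $A$ becomes counting representations of squarefree $D$ as sums of two squares (multiplicity $Q(D)=2^{\omega(D)-1}$ for odd $D$), with the case/signature selection turning into the congruence $D\equiv1$ or $5\pmod{8}$ via Lemma~\ref{lem:QD_with_cong}. Your conductor--discriminant reduction $\Delta_K=|\Delta_2|^3\rrat_K$, the count of $2^{\omega(D)-1}$ conjugate character pairs per $D$, and the signature criterion (parity of the number of primes $\equiv5\pmod{8}$ dividing $D$, which is exactly the $D\bmod 8$ condition) arrive at the identical sum, and the analytic step is then essentially the paper's: factorization through $\zeta(s)L(s,\chi_{-4})$, mod-$8$ twists holomorphic at $s=1$, Wiener--Ikehara. (The paper also runs Wirsing--Odoni in the cases without a mod-$8$ condition, which is what buys its $O(X^{1/3}/(\log X)^{1-\epsilon})$ errors; your all-Tauberian version suffices for $o(X^{1/3})$. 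One repair: the constant as printed, with $1-f_A(p)/p$, diverges to $0$; the residue you compute, namely $L(1,\chi_{-4})$ times the value of the correction factor at $s=1$, equals $\prod_p(1+f_A(p)/p)(1-1/p)$, the form used in Theorem~\ref{thm:C4_asymptotics}, so ``identifying the constant'' requires that sign fix.)

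Part (b), however, is a genuine gap: you defer exactly the computation that constitutes the proof, namely, for each $e$, enumerating the local characters $\chi_2$ at $2$ whose square is the $2$-part of $\chi_{K_2}$, tracking how their conductor enters $\Delta_K$, and how $\chi_2(-1)$ interacts with the signature. This cannot be waved through with ``it produces $2^{-2^{3-e}}$'', because carrying it out honestly does \emph{not} return the stated constant when $e=1$: there the local square must be $\chi_8$ (the character of $\QQ(\sqrt{2})$), and there are \emph{four} characters of $(\ZZ/16\ZZ)^\times$ with square $\chi_8$, two of each value at $-1$; your accounting then gives $2^{\omega(D)}$ fields of each signature per admissible odd $D$ (quadratic subfield $\QQ(\sqrt{2D})$), hence a main term $C_{\Sigma_A}X^{1/3}\big/\bigl(2^{3}\rrat_K^{1/3}\bigr)$ rather than $C_{\Sigma_A}X^{1/3}\big/\bigl(2^{4}\rrat_K^{1/3}\bigr)$. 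The discrepancy is not an artifact of your framework: $\QQ\bigl(\sqrt{10+\sqrt{10}}\bigr)$ and $\QQ\bigl(\sqrt{10+3\sqrt{10}}\bigr)$ are distinct totally real $C_4$-fields, both of discriminant $2^{11}\cdot5^3$ and $\rrat_K=4$ (so $e=1$, $A=1$), whereas the paper's own reduction in its case (i) credits $D=10$ with only $Q(10)=1$ field --- it uses unordered representations $B^2+C^2$, although for even $D$ both orderings satisfy the case-(i) constraints and, by uniqueness of the parametrization, give distinct fields. So the step you postponed is precisely where all the content and the subtlety of part (b) live; as written your proposal does not prove it, and completing it carefully would force a re-examination of the stated prefactor for $e=1$ (the $e=2,3$ cases do come out as claimed by the same bookkeeping).
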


\penalty-100
\begin{remark}\mbox{}
	\begin{enumerate}
		\item This result is proved in \S\ref{sec:C4shapes_distribution} by relating the counts to how many ways certain integers can be written as a sum of two squares. The tools we use are the Wirsing--Odoni method and the Wiener--Ikehara Tauberian Theorem.
		\item In Theorem~\ref{thm:C4_asymptotics}, for $N_\wild(X;A,e)$ for $e=1,3$, we in fact get an error of $O\!\left(X^{1/3}/(\log X)^{1-\epsilon}\right)$, for all $\epsilon>0$.
		\item The proportions of the number of fields with different $\rrat_K$ depends arithmetically on the value of $\rrat_K$. For instance, if $p$ is an odd prime, the proportion of fields with ramification ratio $\rrat_K$ versus those with ramification ratio $p\cdot\rrat_K$ is
		\[
		\begin{cases}
			p^{2/3}+\dfrac{2}{p^{1/3}}	& p\equiv1\mod{4}\\
			p^{2/3}				& p\equiv3\mod{4}.
		\end{cases}
	\]
	This implies that these proportions do not arise simply from the action of some real Lie group as is the case for the measure in the $V_4$ case (see Remark~\ref{rem:invariantmeasure}).
	\end{enumerate}
\end{remark}

\subsection{Outline of this article}
In \S\ref{sec:shape}, we recall some basic definitions and facts concerning shapes of number fields. In \S\ref{sec:lattices}, we overview the relevant features of conorm diagrams, as introduced by John Conway and Neil Sloane \cite{Conway-Sloane}. This provides an elegant and convenient way to treat rank $3$ lattices. This section also contains a discussion of the natural measures that come with the spaces of orthorhombic lattices we study. We then move on to proving the main results of this article in the remaining four sections, beginning with the $V_4$ case, then the $C_4$ case. In sections \ref{sec:V4_shapes} and \ref{sec:C4fields_description}, we determine the shapes of $V_4$- and $C_4$-quartic fields, respectively. The equidistribution of shapes of $V_4$-quartics is shown in \S\ref{sec:V4equid} and the asymptotics for $C_4$-quartics of a given shape are derived in \S\ref{sec:C4shapes_distribution}.

\section{The shape of a number field}\label{sec:shape}
In this brief section, we recall the notion of the shape of a number field. For additional number-theoretic details, we refer the reader to \cite[\S I.5]{Neukirch}.

The \emph{shape} of a rank $d$ lattice $\Lambda$ in a real inner product space $V$ is its equivalence class under orthogonal transformations and homotheties. The shape can be encoded as a Gram matrix modulo a change-of-basis action by $\gl_d(\ZZ)$ and a scaling action by $\RR^\times$, as follows. Given a basis $B=(v_1,\dots,v_d)$ of $\Lambda$, we may form its Gram matrix $G_B:=(\langle v_i, v_j\rangle)$, where $\lr$ denotes the inner product on $V$. If $B^\prime=(v_1^\prime,,\dots,v_d^\prime)$ is another basis of $\Lambda$, then there is an element $g\in\gl_n(\ZZ)$ such that
\[
	\vect{v_1^\prime\\\vdots\\v_d^\prime}=g\vect{v_1\\\vdots\\v_d}
\]
(and vice versa). The bilinearity of $\lr$ implies that
\[
	G_{B^\prime}=gG_Bg^T.
\]
Accordingly, letting $\mc{G}$ denote the space of positive definite symmetric $d\times d$ real matrices, we define a left action of $g\in\gl_2(\RR)$ on $G\in\mc{G}$ by
\[
	g\cdot G:=gGg^T.
\]
We can also think of $\RR^\times$ as acting on the basis $B$ by scaling. The (right) action of $\lambda\in\RR^\times$ on $G\in\mc{G}$ is then $G\cdot\lambda:=\lambda^2G$. We then have a bijection between shapes of rank $d$ lattices (i.e.\ lattices up to rotations, reflections, and scaling) and the set
\[
	\gl_d(\ZZ)\backslash\mc{G}/\RR^\times,
\]
the map being given by taking $B$ to be a basis of $\Lambda$ and sending $\Lambda$ to $\sh(\Lambda):=\gl_d(\ZZ)\cdot G_B\cdot\RR^\times$.\footnote{The surjectivity of this map can be seen as a consequence of the spectral theorem for symmetric real matrices.}

The geometry of numbers attaches a lattice to a number field $K$, as follows. Let $K$ be a number field of degree $n$ and let $\sigma_1,\dots,\sigma_n:K\rightarrow\CC$ denote its $n$ complex embeddings. We call the map $j:K\rightarrow\CC^n$ given by $\alpha\mapsto(\sigma_1(\alpha),\dots,\sigma_n(\alpha))$ the \textit{Minkowski embedding of $K$}. It is a fundamental result of the geometry of numbers that the $\RR$-span of the image of $j$ is an $n$-dimensional real inner product space (where the inner product is the restriction of the standard Hermitian inner product on $\CC^n$). We call this space the \textit{Minkowski space of $K$} and denote it by $K_\RR$. The image of the ring of integers $\mc{O}_K$ of $K$ under $j$ is a lattice of rank $n$ in $K_\RR$ that we denote $\Lambda_K$. The covolume of this lattice is $\sqrt{|\Delta_K|}$ so that as $|\Delta_K|\rightarrow\infty$ the $\Lambda_K$ get ``bigger''. However, the vector $j(1)$ is of constant length $\sqrt{n}$, thus skewing the shapes of the $\Lambda_K$ in a family of degree $n$ fields ordered by discriminant. We therefore define the \textit{shape of $K$}, denoted $\sh(K)$, to be the shape of the lattice $\Lambda_K^\perp$ obtained by taking the orthogonal projection of $\Lambda_K$ onto the orthogonal complement of $j(1)$.

Concretely, we will frequently obtain the shape as follows. First note, that for any $\alpha\in K$,
\[
	\langle j(1),j(\alpha)\rangle=\tr(\alpha),
\]
where $\tr:K\rightarrow\QQ$ is the usually trace map of the field extension $K/\QQ$. We may therefore define a ``perp map'' from $K$ to itself by\footnote{We have scaled by $n$ so as to preserve integrality, i.e.\ so that the image of $\mc{O}_K$ under the perp map lies in $\mc{O}_K$. This is not strictly necessary for our purposes, but is convenient.}
\[
	\alpha^\perp:=n\alpha-\tr(\alpha).
\]
Letting $\mc{O}_K^\perp$ denote the image of $\mc{O}_K$ under the perp map, we then get, from standard linear algebra formulas for orthogonal projection, that $j(\mc{O}_K^\perp)=n\Lambda_K^\perp$. Therefore, the shape of $K$ is also the shape of the lattice $j(\mc{O}_K^\perp)$. If $(1,\gamma_1,\dots,\gamma_{n-1})$ is an integral basis of $K$, then $(n\gamma_1-\tr(\gamma_1),\dots,n\gamma_{n-1}-\tr(\gamma_{n-1}))$ is a $\ZZ$-basis of $\mc{O}_K^\perp$. With this, we can explicitly calculate the shape of $K$ knowing such an integral basis.

\section{Preliminaries on rank 3 lattices}\label{sec:lattices}
This section recalls an elegant theory due to Conway and Sloane (\cite{Conway-Sloane}) for parametrizing rank $3$ lattices. The so-called conorm diagrams of rank $3$ matrices are quite close to Gram matrices, but understanding when two of them correspond to the same lattice is simpler. Conorm diagrams also allow for an easy determination of the Voronoi cell of a lattice. After a brief overview of the theory of conorm diagrams (following \cite{Conway-Sloane}), we produce the conorm diagrams for the families of lattices that arise in our study of shapes of Galois quartic fields. We end this section by defining natural measures on spaces of orthorhombic lattices for use in our theorem on the equidistribution of shapes of $V_4$-quartic fields.

\subsection{Voronoi reduction theory}

We refer the reader to \cite{Conway-Sloane} for more details.

The term \emph{putative conorm diagram} refers to a labeling of the points of the Fano plane (or, really, its dual) $\mathbf{P}^2(\mathbf{F}_2)$ by real numbers. Here is why. Let $\Lambda$ be a rank 3 lattice in a Euclidean space. An \emph{obtuse superbase} of $\Lambda$ is a quadruple $(v_0,v_1,v_2,v_3)$ of vectors in $\Lambda$ such that
	\begin{itemize}
		\item $(v_1,v_2,v_3)$ is a basis of $\Lambda$,
		\item $v_0+v_1+v_2+v_3=0$, and
		\item $v_i\cdot v_j\leq0$ for all $i\neq j$ (the \emph{obtuse} condition).
	\end{itemize}
	A quadruple $(v_0,v_1,v_2,v_3)$ that does not necessarily satisfy the third condition is simply called a \emph{subperbase}.
Given a superbase, let $-p_{ij}=v_i\cdot v_j$ for $i\neq j$; these are the \emph{putative conorms} of $\Lambda$. These numbers are encoded on the Fano plane (or, really, its dual) as in Figure~\ref{fig:conorm_diagram_definition} in what is called the \emph{putative conorm diagram} of the superbase.
\begin{figure}[h]
	\begin{center}
		\subfigure{
			\begin{tikzpicture}[scale=1.8]
				\draw [-] (-1,0) -- (1,0);
				\draw [-] (-1,0) -- (0,-1.732);
				\draw [-] (1,0) -- (0,-1.732);
				\draw [-] (-1,0) -- (0.5,-0.866);
				\draw [-] (1,0) -- (-0.5,-0.866);
				\draw [-] (0,0) -- (0,-1.732);
				\draw[black] (0,-0.577) circle [x radius=0.577, y radius=0.577];
				\draw[black, fill] (0,0) circle [x radius=0.03, y radius=0.03];
				\draw[black, fill] (1,0) circle [x radius=0.03, y radius=0.03];
				\draw[black, fill] (-1,0) circle [x radius=0.03, y radius=0.03];
				\draw[black, fill] (0,-1.732) circle [x radius=0.03, y radius=0.03];
				\draw[black, fill] (-0.5,-0.866) circle [x radius=0.03, y radius=0.03];
				\draw[black, fill] (0.5,-0.866) circle [x radius=0.03, y radius=0.03];
				\draw[black, fill] (0,-0.577) circle [x radius=0.03, y radius=0.03];
				
				\node at (0.08,-0.42) {$0$};
				\node at (0.7, -0.9) {$p_{12}$};
				\node at (0, 0.15) {$p_{13}$};
				\node at (-0.7, -0.9) {$p_{23}$};
				\node at (1.15, 0.1) {$p_{01}$};
				\node at (0, -1.87) {$p_{02}$};
				\node at (-1.15, 0.1) {$p_{03}$};
			\end{tikzpicture}
			}
	\caption{\label{fig:conorm_diagram_definition}Conorm diagram of an obtuse superbase.}
	\end{center}
\end{figure}
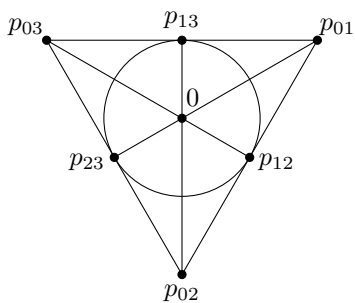
If the superbase is in fact obtuse, one removes the word putative everywhere. In other words, a \emph{conorm diagram} is a putative conorm diagram of an obtuse superbase of some $\Lambda$ (really, up to some automorphism of the Fano plane). The main theorem of \cite{Conway-Sloane} says
\begin{itemize}
	\item the collection of conorm diagrams is exactly those putative conorm diagrams whose entries are non-negative with minimum 0 and whose support does not lie in a proper subspace;
	\item every rank 3 lattice has an obtuse superbase;
	\item	two lattices are isomorphic if and only if their conorm diagrams differ by an automorphism of the Fano plane.
\end{itemize}

Conway and Sloane develop an algorithm they call \emph{Voronoi reduction} which transforms a putative conorm diagram for $\Lambda$ into a conorm diagram for $\Lambda$.

We note that the above results show that two lattices have the same shape if and only if there is an automorphism of the Fano plane that brings one conorm diagram to a scaled version of the other.

\subsection{Combinatorial type of a lattice}

Recall that the Voronoi cell of a lattice is the set of points closer to the origin than to any other lattice point.

\begin{theorem}[\cite{Fedorov85,Fedorov91}, {\cite[Figure~7 and Theorem~9]{Conway-Sloane}}]
	The Voronoi cells of rank 3 lattices come in 5 combinatorially distinct\footnote{By \emph{combinatorially distinct}, we mean that the triples $(V,E,F)$ encoding the number of vertices, edges, and faces, are distinct.} families represented by the 5 primary parallelohedra:
	\begin{itemize}
		\item[(I)] the \emph{truncated octahedron}, with $(V,E,F)=(24,36,14)$;
		\item[(II)] the \emph{rhombo-hexagonal dodecahedron}, with $(V,E,F)=(18,28,12)$;
		\item[(III)] the \emph{rhombic dodecahedron}, with $(V,E,F)=(14,24,12)$;
		\item[(IV)] the \emph{hexagonal prism}, with $(V,E,F)=(12,18,8)$;
		\item[(V)] the \emph{cuboid}, with $(V,E,F)=(8,12,6)$.
	\end{itemize}
	The family in question can be read off from the configuration of zeroes in the conorm diagram. See Figure~\ref{fig:conorm_diagrams} for the general conorm diagrams of each family.
\end{theorem}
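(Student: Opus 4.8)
The plan is to separate the two ingredients of the statement: the enumeration ``there are exactly five families,'' which I would reduce to a combinatorial count of zero-patterns in the Fano plane, and the identification ``which parallelohedron each family is, together with its $(V,E,F)$,'' which rests on the Conway--Sloane dictionary between conorms and the faces of the Voronoi cell.

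First I would record the key reduction. By the main theorem of \cite{Conway-Sloane} recalled above, a rank $3$ lattice is encoded, up to isomorphism, by its conorm diagram up to an automorphism of $\PP^2(\FF_2)$, and these diagrams are exactly the non-negative labelings with minimum $0$ whose \emph{support} $S$ (the set of points carrying a nonzero conorm) is not contained in a proper subspace. The crucial claim is that the combinatorial type $(V,E,F)$ of the Voronoi cell depends only on $S$, not on the actual positive values: the set of Voronoi-relevant vectors is determined by which conorms vanish, so the face lattice is constant as the positive conorms vary within a fixed support cone. Granting this, the classification reduces to enumerating the admissible supports up to $\mathrm{Aut}(\PP^2(\FF_2))=\gl_3(\FF_2)$.

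I would organize the count by the size of the zero set $Z$, the complement of $S$ among the $7$ points, noting that ``minimum $0$'' forces $Z\neq\emptyset$ while ``support spans'' forces $S$ to be neither a single point nor a line (the nonzero points of a proper subspace). I then use that $\gl_3(\FF_2)$ is transitive on points, $2$-transitive on points, and transitive both on lines (collinear triples) and on bases (non-collinear triples). This gives: $|Z|=1$, one orbit; $|Z|=2$, one orbit (here $|S|=5$ cannot lie in a $3$-point line, so it spans); $|Z|=3$, two orbits according to whether $Z$ is collinear or a triangle (in both cases $|S|=4$ spans); $|Z|=4$, one orbit, the unique admissible case, in which $S$ is a triangle (were $S$ the remaining line it would not span); and $|Z|\ge 5$ gives nothing, since then $|S|\le 2$ cannot span. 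The total is $1+1+2+1=5$, matching the five families.

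Finally I would attach a parallelohedron and its $(V,E,F)$ to each orbit by computing the Voronoi cell of one explicit representative. At the extremes this is transparent: an orthogonal basis (support a triangle, $|Z|=4$) gives the cuboid with its six facets $\pm e_i$, while the body-centered cubic superbase (the symmetric diagram with a single central zero, $|Z|=1$) gives the truncated octahedron; the intermediate cases $|Z|=2$ and the two $|Z|=3$ orbits realize the rhombic and rhombo-hexagonal dodecahedra and the hexagonal prism, in an order pinned down by the explicit computation. The main obstacle is exactly this geometric dictionary: proving that the face lattice is constant on each support cone, and computing how the truncated octahedron degenerates---which facets merge or collapse---as conorms are sent to $0$. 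This is the substance of \cite{Conway-Sloane} (and of Fedorov's original classification), so I would either reproduce their explicit expressions for the relevant vectors in terms of the conorms or invoke Figure~7 and Theorem~9 there; the combinatorial enumeration above then certifies that these five cells are all that can occur.
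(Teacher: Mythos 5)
This statement is not proved in the paper at all: it is quoted background, attributed to Fedorov and to \cite[Figure~7 and Theorem~9]{Conway-Sloane}, so there is no internal proof to compare against. Your proposal is a correct reconstruction of the cited result, and it is honest about where the real geometric content lies. Your Fano-plane enumeration is sound: admissibility forces the zero set $Z$ to be nonempty and the support $S$ to span, $\gl_3(\FF_2)$ is $2$-transitive on the $7$ points and transitive on lines and on non-collinear triples (bases), and the case analysis $|Z|=1,2,3,4$ with the spanning constraint indeed yields exactly $1+1+2+1=5$ orbits; these match the five diagrams in the paper's Figure~\ref{fig:conorm_diagrams} (one central zero; two zeros; three collinear zeros; three non-collinear zeros; four zeros with $S$ a triangle). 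What your argument does not (and cannot, without reproducing Conway--Sloane) supply is the dictionary: that the face lattice of the Voronoi cell depends only on the support of the conorm diagram, together with the identification of each orbit with a specific parallelohedron and its $(V,E,F)$ --- you correctly flag this and defer it to Theorem~9 and Figure~7 of \cite{Conway-Sloane}, which is precisely what the paper itself cites. So your route buys something the paper does not provide, namely a self-contained certification that five is the right count of zero-patterns, while resting on the same external input for the geometry; one small caution is that the realizability of every admissible diagram by an actual lattice (needed so that all five orbits actually occur) is also part of the Conway--Sloane main theorem, which you implicitly use and should cite at that step as well.
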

\renewcommand{\thesubfigure}{\,\,(\Roman{subfigure})}
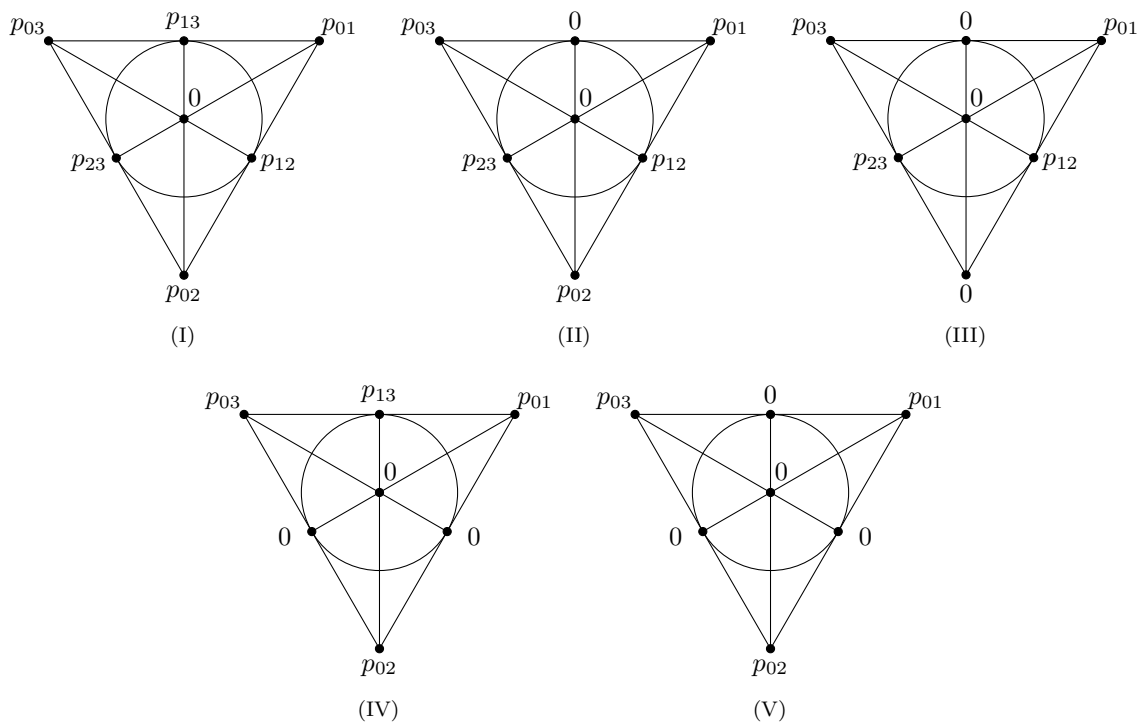
\begin{figure}[h]
	\begin{center}
		\subfigure[]{
			\begin{tikzpicture}[scale=1.8]
				\draw [-] (-1,0) -- (1,0);
				\draw [-] (-1,0) -- (0,-1.732);
				\draw [-] (1,0) -- (0,-1.732);
				\draw [-] (-1,0) -- (0.5,-0.866);
				\draw [-] (1,0) -- (-0.5,-0.866);
				\draw [-] (0,0) -- (0,-1.732);
				\draw[black] (0,-0.577) circle [x radius=0.577, y radius=0.577];
				\draw[black, fill] (0,0) circle [x radius=0.03, y radius=0.03];
				\draw[black, fill] (1,0) circle [x radius=0.03, y radius=0.03];
				\draw[black, fill] (-1,0) circle [x radius=0.03, y radius=0.03];
				\draw[black, fill] (0,-1.732) circle [x radius=0.03, y radius=0.03];
				\draw[black, fill] (-0.5,-0.866) circle [x radius=0.03, y radius=0.03];
				\draw[black, fill] (0.5,-0.866) circle [x radius=0.03, y radius=0.03];
				\draw[black, fill] (0,-0.577) circle [x radius=0.03, y radius=0.03];
				
				\node at (0.08,-0.42) {$0$};
				\node at (0.7, -0.9) {$p_{12}$};
				\node at (0, 0.15) {$p_{13}$};
				\node at (-0.7, -0.9) {$p_{23}$};
				\node at (1.15, 0.1) {$p_{01}$};
				\node at (0, -1.87) {$p_{02}$};
				\node at (-1.15, 0.1) {$p_{03}$};
			\end{tikzpicture}
		}
		\subfigure[]{
			\begin{tikzpicture}[scale=1.8]
				\draw [-] (-1,0) -- (1,0);
				\draw [-] (-1,0) -- (0,-1.732);
				\draw [-] (1,0) -- (0,-1.732);
				\draw [-] (-1,0) -- (0.5,-0.866);
				\draw [-] (1,0) -- (-0.5,-0.866);
				\draw [-] (0,0) -- (0,-1.732);
				\draw[black] (0,-0.577) circle [x radius=0.577, y radius=0.577];
				\draw[black, fill] (0,0) circle [x radius=0.03, y radius=0.03];
				\draw[black, fill] (1,0) circle [x radius=0.03, y radius=0.03];
				\draw[black, fill] (-1,0) circle [x radius=0.03, y radius=0.03];
				\draw[black, fill] (0,-1.732) circle [x radius=0.03, y radius=0.03];
				\draw[black, fill] (-0.5,-0.866) circle [x radius=0.03, y radius=0.03];
				\draw[black, fill] (0.5,-0.866) circle [x radius=0.03, y radius=0.03];
				\draw[black, fill] (0,-0.577) circle [x radius=0.03, y radius=0.03];
				
				\node at (0.08,-0.42) {$0$};
				\node at (0.7, -0.9) {$p_{12}$};
				\node at (0, 0.15) {$0$};
				\node at (-0.7, -0.9) {$p_{23}$};
				\node at (1.15, 0.1) {$p_{01}$};
				\node at (0, -1.87) {$p_{02}$};
				\node at (-1.15, 0.1) {$p_{03}$};
			\end{tikzpicture}
		}
		\subfigure[]{
			\begin{tikzpicture}[scale=1.8]
				\draw [-] (-1,0) -- (1,0);
				\draw [-] (-1,0) -- (0,-1.732);
				\draw [-] (1,0) -- (0,-1.732);
				\draw [-] (-1,0) -- (0.5,-0.866);
				\draw [-] (1,0) -- (-0.5,-0.866);
				\draw [-] (0,0) -- (0,-1.732);
				\draw[black] (0,-0.577) circle [x radius=0.577, y radius=0.577];
				\draw[black, fill] (0,0) circle [x radius=0.03, y radius=0.03];
				\draw[black, fill] (1,0) circle [x radius=0.03, y radius=0.03];
				\draw[black, fill] (-1,0) circle [x radius=0.03, y radius=0.03];
				\draw[black, fill] (0,-1.732) circle [x radius=0.03, y radius=0.03];
				\draw[black, fill] (-0.5,-0.866) circle [x radius=0.03, y radius=0.03];
				\draw[black, fill] (0.5,-0.866) circle [x radius=0.03, y radius=0.03];
				\draw[black, fill] (0,-0.577) circle [x radius=0.03, y radius=0.03];
				
				\node at (0.08,-0.42) {$0$};
				\node at (0.7, -0.9) {$p_{12}$};
				\node at (0, 0.15) {$0$};
				\node at (-0.7, -0.9) {$p_{23}$};
				\node at (1.15, 0.1) {$p_{01}$};
				\node at (0, -1.87) {$0$};
				\node at (-1.15, 0.1) {$p_{03}$};
			\end{tikzpicture}
		}
		
		\subfigure[]{
			\begin{tikzpicture}[scale=1.8]
				\draw [-] (-1,0) -- (1,0);
				\draw [-] (-1,0) -- (0,-1.732);
				\draw [-] (1,0) -- (0,-1.732);
				\draw [-] (-1,0) -- (0.5,-0.866);
				\draw [-] (1,0) -- (-0.5,-0.866);
				\draw [-] (0,0) -- (0,-1.732);
				\draw[black] (0,-0.577) circle [x radius=0.577, y radius=0.577];
				\draw[black, fill] (0,0) circle [x radius=0.03, y radius=0.03];
				\draw[black, fill] (1,0) circle [x radius=0.03, y radius=0.03];
				\draw[black, fill] (-1,0) circle [x radius=0.03, y radius=0.03];
				\draw[black, fill] (0,-1.732) circle [x radius=0.03, y radius=0.03];
				\draw[black, fill] (-0.5,-0.866) circle [x radius=0.03, y radius=0.03];
				\draw[black, fill] (0.5,-0.866) circle [x radius=0.03, y radius=0.03];
				\draw[black, fill] (0,-0.577) circle [x radius=0.03, y radius=0.03];
				
				\node at (0.08,-0.42) {$0$};
				\node at (0.7, -0.9) {$0$};
				\node at (0, 0.15) {$p_{13}$};
				\node at (-0.7, -0.9) {$0$};
				\node at (1.15, 0.1) {$p_{01}$};
				\node at (0, -1.87) {$p_{02}$};
				\node at (-1.15, 0.1) {$p_{03}$};
			\end{tikzpicture}
		}
		\subfigure[]{
			\begin{tikzpicture}[scale=1.8]
				\draw [-] (-1,0) -- (1,0);
				\draw [-] (-1,0) -- (0,-1.732);
				\draw [-] (1,0) -- (0,-1.732);
				\draw [-] (-1,0) -- (0.5,-0.866);
				\draw [-] (1,0) -- (-0.5,-0.866);
				\draw [-] (0,0) -- (0,-1.732);
				\draw[black] (0,-0.577) circle [x radius=0.577, y radius=0.577];
				\draw[black, fill] (0,0) circle [x radius=0.03, y radius=0.03];
				\draw[black, fill] (1,0) circle [x radius=0.03, y radius=0.03];
				\draw[black, fill] (-1,0) circle [x radius=0.03, y radius=0.03];
				\draw[black, fill] (0,-1.732) circle [x radius=0.03, y radius=0.03];
				\draw[black, fill] (-0.5,-0.866) circle [x radius=0.03, y radius=0.03];
				\draw[black, fill] (0.5,-0.866) circle [x radius=0.03, y radius=0.03];
				\draw[black, fill] (0,-0.577) circle [x radius=0.03, y radius=0.03];
				
				\node at (0.08,-0.42) {$0$};
				\node at (0.7, -0.9) {$0$};
				\node at (0, 0.15) {$0$};
				\node at (-0.7, -0.9) {$0$};
				\node at (1.15, 0.1) {$p_{01}$};
				\node at (0, -1.87) {$p_{02}$};
				\node at (-1.15, 0.1) {$p_{03}$};
			\end{tikzpicture}
		}
	\caption{\label{fig:conorm_diagrams}Conorm diagrams of the 5 families of Voronoi cells.}
	\end{center}
\end{figure}
\renewcommand{\thesubfigure}{(\alph{subfigure})}

\begin{definition}
	We use the term \emph{combinatorial type} of a rank 3 lattice to refer to which of the above 5 parallelohedra represents the Voronoi cell of the lattice.
\end{definition}

We now work out conorm diagrams for the families of lattices we will encounter in studying the shapes of Galois quartic fields.

\subsection{Tetragonal and cubic lattices}
In this section, we determine the conorm diagrams of the tetragonal and cubic lattices, the latter being a special case of the former. We being by recalling what tetragonal and cubic lattices are.

Consider a right rectangular prism of height $c$ with square base of side $a$, with $a\neq c$. A \emph{primitive tetragonal lattice} ($tP$) consists of the vertices of this prism together with all its translates that tile space. A \emph{body-centered tetragonal lattice} ($tI$) is like a primitive one, but with the centre of each prism added to the set of lattice points and $c\neq\sqrt{2}a$. A \textit{primitive} (resp.\ \textit{body-centered}) \textit{cubic lattice} ($cP$ and $cI$, respectively) is as above, but with $a=c$. A \textit{face-centered cubic lattice} ($cF$) is obtained from a primitive one by adding the centre of each face of the prism to the set of lattice points; it is, in fact, the same as the body-centered tetragonal lattice with $c=\sqrt{2}a$.

\begin{proposition}
	There are two combinatorial types of body-centered tetragonal lattices depending on whether ${\frac{c}{a}<\sqrt{2}}$ or ${\frac{c}{a}>\sqrt{2}}$. Their conorm diagrams are given in Figure~\ref{fig:conorms_tI}. The body-centered cubic lattice is obtained by setting $\frac{c}{a}=1$ and the face-centered cubic by setting $\frac{c}{a}=\sqrt{2}$.
	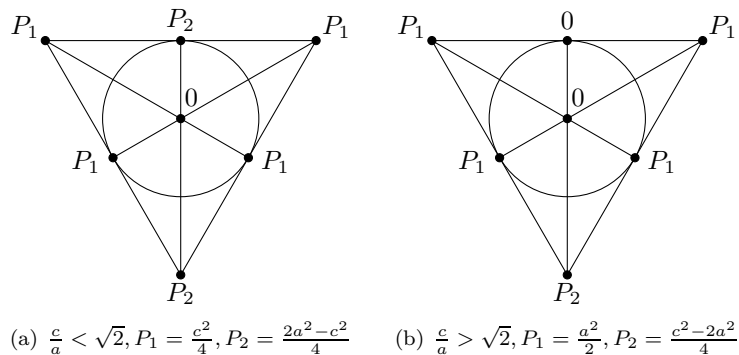
\begin{figure}[h]
	\begin{center}
		\subfigure[$\frac{c}{a}<\sqrt{2}, P_1=\frac{c^2}{4}, P_2=\frac{2a^2-c^2}{4}$]{
			\begin{tikzpicture}[scale=1.8]
				\draw [-] (-1,0) -- (1,0);
				\draw [-] (-1,0) -- (0,-1.732);
				\draw [-] (1,0) -- (0,-1.732);
				\draw [-] (-1,0) -- (0.5,-0.866);
				\draw [-] (1,0) -- (-0.5,-0.866);
				\draw [-] (0,0) -- (0,-1.732);
				\draw[black] (0,-0.577) circle [x radius=0.577, y radius=0.577];
				\draw[black, fill] (0,0) circle [x radius=0.03, y radius=0.03];
				\draw[black, fill] (1,0) circle [x radius=0.03, y radius=0.03];
				\draw[black, fill] (-1,0) circle [x radius=0.03, y radius=0.03];
				\draw[black, fill] (0,-1.732) circle [x radius=0.03, y radius=0.03];
				\draw[black, fill] (-0.5,-0.866) circle [x radius=0.03, y radius=0.03];
				\draw[black, fill] (0.5,-0.866) circle [x radius=0.03, y radius=0.03];
				\draw[black, fill] (0,-0.577) circle [x radius=0.03, y radius=0.03];
				
				\node at (0.08,-0.42) {$0$};
				\node at (0.7, -0.9) {$P_1$};
				\node at (0, 0.15) {$P_2$};
				\node at (-0.7, -0.9) {$P_1$};
				\node at (1.15, 0.1) {$P_1$};
				\node at (0, -1.87) {$P_2$};
				\node at (-1.15, 0.1) {$P_1$};
			\end{tikzpicture}
		}
		\subfigure[$\frac{c}{a}>\sqrt{2}, P_1=\frac{a^2}{2}, P_2=\frac{c^2-2a^2}{4}$]{
			\begin{tikzpicture}[scale=1.8]
				\draw [-] (-1,0) -- (1,0);
				\draw [-] (-1,0) -- (0,-1.732);
				\draw [-] (1,0) -- (0,-1.732);
				\draw [-] (-1,0) -- (0.5,-0.866);
				\draw [-] (1,0) -- (-0.5,-0.866);
				\draw [-] (0,0) -- (0,-1.732);
				\draw[black] (0,-0.577) circle [x radius=0.577, y radius=0.577];
				\draw[black, fill] (0,0) circle [x radius=0.03, y radius=0.03];
				\draw[black, fill] (1,0) circle [x radius=0.03, y radius=0.03];
				\draw[black, fill] (-1,0) circle [x radius=0.03, y radius=0.03];
				\draw[black, fill] (0,-1.732) circle [x radius=0.03, y radius=0.03];
				\draw[black, fill] (-0.5,-0.866) circle [x radius=0.03, y radius=0.03];
				\draw[black, fill] (0.5,-0.866) circle [x radius=0.03, y radius=0.03];
				\draw[black, fill] (0,-0.577) circle [x radius=0.03, y radius=0.03];
				
				\node at (0.08,-0.42) {$0$};
				\node at (0.7, -0.9) {$P_1$};
				\node at (0, 0.15) {$0$};
				\node at (-0.7, -0.9) {$P_1$};
				\node at (1.15, 0.1) {$P_1$};
				\node at (0, -1.87) {$P_2$};
				\node at (-1.15, 0.1) {$P_1$};
			\end{tikzpicture}
		}
	\caption{\label{fig:conorms_tI}Conorm diagrams of body-centered tetragonal lattices. The body-centered cubic lattice is obtained by taking $P_1=P_2$ in the diagram on the left, while the face-centered cubic lattice is obtained by taking $P_2=0$ in either diagram.}
	\end{center}
\end{figure}
\end{proposition}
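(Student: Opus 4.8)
The plan is to realize the body-centered tetragonal lattice concretely and then extract its conorm diagram from an explicit obtuse superbase, using the recipe $p_{ij}=-v_i\cdot v_j$ together with Conway and Sloane's characterization. Place the square base in the $xy$-plane, so that $tI$ is the lattice $\Lambda$ generated by $(a,0,0)$, $(0,a,0)$, $(0,0,c)$ together with the body center $\frac12(a,a,c)$; its covolume is $\frac12 a^2 c$, as expected. All eight half-diagonals $\frac12(\pm a,\pm a,\pm c)$ lie in $\Lambda$ and have common squared length $\frac14(2a^2+c^2)$. The crux of the argument is to select, in each of the two regimes, four lattice vectors that sum to zero, form a superbase, and satisfy the obtuse condition $v_i\cdot v_j\le 0$; the theorem of \cite{Conway-Sloane} then guarantees that the resulting diagram is the conorm diagram, unique up to an automorphism of the Fano plane.

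For $\frac ca<\sqrt2$ I would take the superbase consisting of the four half-diagonals having an even number of minus signs, namely $v_1=\frac12(a,a,c)$, $v_2=\frac12(a,-a,-c)$, $v_3=\frac12(-a,a,-c)$, and $v_0=-(v_1+v_2+v_3)=\frac12(-a,-a,c)$. A determinant computation shows $(v_1,v_2,v_3)$ is a basis, and the six pairwise products give four conorms equal to $P_1=\frac{c^2}{4}$ and two equal to $P_2=\frac{2a^2-c^2}{4}$, the latter sitting on a pair of complementary points of the Fano plane. The obtuse condition holds precisely when $P_2\ge0$, that is, when $\frac ca\le\sqrt2$, which is exactly the stated regime. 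For $\frac ca>\sqrt2$ this superbase is no longer obtuse, and Voronoi reduction leads instead to $v_1=(a,0,0)$, $v_3=(0,a,0)$, $v_2=\frac12(-a,-a,c)$, $v_0=\frac12(-a,-a,-c)$; here $v_1\cdot v_3=0$ forces the conorm $p_{13}$ to vanish, while the remaining pairwise products give four conorms equal to $P_1=\frac{a^2}{2}$ and one equal to $P_2=\frac{c^2-2a^2}{4}$. Again the obtuse condition is equivalent to $P_2\ge0$, i.e.\ $\frac ca\ge\sqrt2$.

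It then remains to match these data to Figure~\ref{fig:conorms_tI} up to an automorphism of the Fano plane, and to read off the combinatorial type from the pattern of zeros. In the first diagram the six pairwise conorms are positive, so the minimum-$0$ condition forces the central conorm to be $0$; this is the unique zero, and the lattice is of type (I), the truncated octahedron. In the second diagram the vanishing of $p_{13}$ supplies an additional zero, placing the lattice in type (II), the rhombo-hexagonal dodecahedron. The special cases fall out of the formulas: setting $\frac ca=1$ makes $P_1=P_2$ in the left diagram, giving the body-centered cubic lattice, while setting $\frac ca=\sqrt2$ makes $P_2=0$ in either diagram, collapsing both to a diagram whose zeros are a complementary pair together with the center, namely the rhombic dodecahedron (type (III)) of the face-centered cubic lattice.

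The main obstacle I anticipate is not the arithmetic but the verification that the two exhibited superbases are genuinely the obtuse (Voronoi-reduced) ones in their respective ranges, and that the transition occurs exactly at $\frac ca=\sqrt2$: one must check that each superbase fails the obtuse condition on the far side of $\sqrt2$, and confirm via one step of Conway--Sloane Voronoi reduction that crossing $\sqrt2$ carries one diagram to the other. A secondary point requiring care is the bookkeeping of the Fano automorphism identifying my labeling of the conorms with the placement in Figure~\ref{fig:conorms_tI}, together with the observation that the two diagrams agree along the common boundary $P_2=0$.
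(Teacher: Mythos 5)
Your proposal is correct and takes essentially the same approach as the paper: for $\frac{c}{a}\le\sqrt{2}$ the paper uses exactly your superbase of half-diagonals with an even number of minus signs, computes the same conorms $P_1=\frac{c^2}{4}$, $P_2=\frac{2a^2-c^2}{4}$, and observes obtuseness fails precisely past $\sqrt{2}$. The only difference is cosmetic: where the paper disposes of the regime $\frac{c}{a}>\sqrt{2}$ by citing one step of Voronoi reduction plus a Fano-plane automorphism, you write down the resulting obtuse superbase $(a,0,0)$, $(0,a,0)$, $\tfrac12(-a,-a,c)$, $\tfrac12(-a,-a,-c)$ explicitly and verify its conorms directly, which is a slightly more self-contained rendering of the same argument.
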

\begin{proof}
	Independent of the value of $c/a$, the vectors $w_1=(c,0,0),w_2=(0,a,0),w_3=(0,0,a)$ form a basis of a primitive tetragonal lattice. The associated body-centered lattice is then the $\ZZ$-span of $w_1,w_2,w_3$, and the midpoint $w_4=\frac{1}{2}(c,a,a)$. Let
	\begin{align*}
		v_0&=\frac{1}{2}(c,a,a)&v_1&=\frac{1}{2}(-c,-a,a)&v_2&=\frac{1}{2}(c,-a,-a)&v_3&=\frac{1}{2}(-c,a,-a).
	\end{align*}
	Since
	\begin{align*}
		w_1&=-v_1-v_3&		&&v_1&=w_3-w_4\\
		w_2&=-v_1-v_2&\text{and}&&v_2&=w_1-w_4\\
		w_3&=-v_2-v_3&		&&v_3&=w_2-w_4\\
		w_4&=v_0=-v_1-v_2-v_3
	\end{align*}
	we see that $(v_0,v_1,v_2,v_3)$ is an obtuse superbase of the body-centered tetragonal lattice. Computing the putative conorm diagram shows that it is already obtuse when $c\leq\sqrt{2}a$. When $c>\sqrt{2}a$, applying one step of the Voronoi reduction algorithm to the vertical line and an appropriate automorphism of the Fano plane yields the desired result.
\end{proof}

\begin{proposition}
	The unique family of primitive tetragonal lattices has conorm diagram given in Figure~\ref{fig:conorms_tP}. The primitive cubic lattice is obtained by taking $a=c$.
\end{proposition}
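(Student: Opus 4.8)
The plan is to mirror the proof of the previous proposition: write down an explicit subperbase of a primitive tetragonal lattice, compute its putative conorms by hand, and check that the resulting diagram is already a genuine conorm diagram, so that (unlike the body-centered case) no step of Voronoi reduction is ever required.

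First I would take the natural basis $w_1=(c,0,0)$, $w_2=(0,a,0)$, $w_3=(0,0,a)$ of the primitive tetragonal lattice of height $c$ over a square base of side $a$, matching the axis conventions of the $tI$ computation. Setting $v_i=w_i$ for $i=1,2,3$ and $v_0=-(w_1+w_2+w_3)$ gives a subperbase, since $(v_1,v_2,v_3)$ is a basis and $v_0+v_1+v_2+v_3=0$ by construction. Computing $-p_{ij}=v_i\cdot v_j$ is then immediate: the base vectors are mutually orthogonal, so $p_{12}=p_{13}=p_{23}=0$, while $p_{01}=c^2$ and $p_{02}=p_{03}=a^2$. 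Up to a relabeling of the Fano plane (i.e.\ a Fano automorphism, which preserves the lattice) this is exactly the diagram of Figure~\ref{fig:conorms_tP}, and its pattern of zeroes ($p_{12}=p_{13}=p_{23}=0$, with $p_{01},p_{02},p_{03}$ nonzero) places it in combinatorial type (V), the cuboid.

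The one point requiring attention is the verification that this putative diagram is a bona fide conorm diagram, which is where the contrast with the body-centered case lives. Here every conorm is non-negative for all $a,c>0$ with no dependence on the ratio $c/a$; equivalently, the obtuse condition $v_i\cdot v_j\leq0$ holds outright, so there is a single family rather than the dichotomy seen for $tI$. To finish I would invoke the characterization quoted from \cite{Conway-Sloane}: the entries are non-negative with minimum $0$, and the support $\{p_{01},p_{02},p_{03}\}$ consists of the three vertices of the triangle, which do not form a line of the Fano plane and hence span, so the diagram is valid. Specializing $a=c$ collapses $p_{01}=p_{02}=p_{03}=a^2$ while keeping the inner conorms zero, yielding precisely the primitive cubic lattice and establishing the final sentence. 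I anticipate no real obstacle; the only care needed is to match the equal conorms to the nodes of the figure and to confirm the non-collinearity of the support.
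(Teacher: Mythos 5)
Your proof is correct and is essentially the paper's own argument: the paper likewise exhibits the obtuse superbase $v_0=(-c,-a,-a)$, $v_1=(0,a,0)$, $v_2=(c,0,0)$, $v_3=(0,0,a)$ and reads off the conorm diagram, the only difference being your ordering of $v_1,v_2,v_3$, which you correctly absorb into a Fano-plane relabeling. Your extra verifications (obtuseness holds for all $a,c>0$, and the support $\{p_{01},p_{02},p_{03}\}$ is not contained in a line) are exactly what the paper leaves implicit.
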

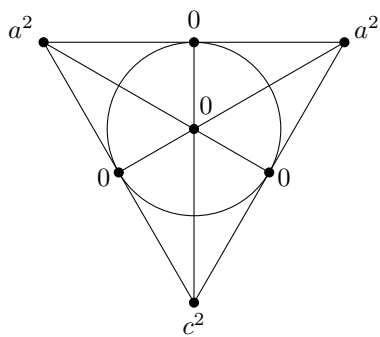
\begin{figure}[h]
	\begin{center}
		\subfigure{
			\begin{tikzpicture}[scale=2]
				\draw [-] (-1,0) -- (1,0);
				\draw [-] (-1,0) -- (0,-1.732);
				\draw [-] (1,0) -- (0,-1.732);
				\draw [-] (-1,0) -- (0.5,-0.866);
				\draw [-] (1,0) -- (-0.5,-0.866);
				\draw [-] (0,0) -- (0,-1.732);
				\draw[black] (0,-0.577) circle [x radius=0.577, y radius=0.577];
				\draw[black, fill] (0,0) circle [x radius=0.03, y radius=0.03];
				\draw[black, fill] (1,0) circle [x radius=0.03, y radius=0.03];
				\draw[black, fill] (-1,0) circle [x radius=0.03, y radius=0.03];
				\draw[black, fill] (0,-1.732) circle [x radius=0.03, y radius=0.03];
				\draw[black, fill] (-0.5,-0.866) circle [x radius=0.03, y radius=0.03];
				\draw[black, fill] (0.5,-0.866) circle [x radius=0.03, y radius=0.03];
				\draw[black, fill] (0,-0.577) circle [x radius=0.03, y radius=0.03];
				
				\node at (0.08,-0.42) {0};
				\node at (0.6, -0.9) {0};
				\node at (0, 0.15) {0};
				\node at (-0.6, -0.9) {0};
				\node at (1.15, 0.1) {$a^2$};
				\node at (0, -1.87) {$c^2$};
				\node at (-1.15, 0.1) {$a^2$};
			\end{tikzpicture}
		}
	\caption{\label{fig:conorms_tP}Conorm diagram of the primitive tetragonal lattice. The primitive cubic lattice is obtained by taking $a=c$.}
	\end{center}
\end{figure}
\begin{proof}
	The obtuse superbase given by
	\begin{align*}
		v_0&=(-c,-a,-a)&v_1&=(0,a,0)&v_2&=(c,0,0)&v_3&=(0,0,a)
	\end{align*}
	yields the claimed conorm diagram.
\end{proof}

\subsection{Orthorhombic and hexagonal lattices}
Consider a right rectangular prism of height $c$ whose base is a non-square rectangle. Let $a$ denote its depth and $b$ its width, labelled so that $a\leq b$.\footnote{Technically, when $a=b$ what we have is a tetragonal lattice. Base-centered tetragonal lattices will occur later on in a family with base-centered orthorhombic lattices, so we allow $a=b$ here. Note that a base-centered tetragonal lattice with base of side $a$ is the same as a primitive tetragonal lattice with base of side $a/\sqrt{2}$.} Assume $a\neq c\neq b$. A \emph{primitive orthorhombic lattice} ($oP$) consists of the vertices of this prism together with all its translates that tile space. If we add a lattice point in the centre of every base, we get a \emph{base-centered orthorhombic lattice} ($oC$). A special case occurs when $b=\sqrt{3}a$: a \emph{primitive hexagonal lattice} ($hP$). Equivalently, take a right prism of height $c$ whose base is a regular hexagon of side $a$, together with a lattice point in the centre of each base. These latter lattice points are the vertices of the rectangular prisms of the ($oC$, $b=\sqrt{3}a$). The body-centered orthorhombic lattice ($oI$) is obtained similarly to the body-centered tetragonal lattice, i.e.\ by taking the primitive orthorhombic lattice and adding the centre of each prism to the lattice. We choose the side lengths so that $a<b<c$ in this case.

\begin{proposition}\label{prop:baseconorm}
	The unique family of base-centered orthorhombic lattices has conorm diagram given in Figure~\ref{fig:conorms_oC}. The primitive hexagonal lattice is obtained by taking $b=\sqrt{3}a$.
\end{proposition}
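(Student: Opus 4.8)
The plan is to mimic exactly the strategy used in the two preceding propositions on tetragonal lattices: exhibit an explicit obtuse superbase of the lattice, compute its putative conorm diagram by taking the pairwise dot products, and then verify (via Voronoi reduction if necessary) that it matches the claimed diagram in Figure~\ref{fig:conorms_oC}. First I would fix coordinates for the base-centered orthorhombic lattice $oC$. The primitive orthorhombic lattice is spanned by $w_1=(a,0,0)$, $w_2=(0,b,0)$, $w_3=(0,0,c)$, and to base-center it we add the midpoint $w_4=\tfrac12(a,b,0)$ of each base (the base lying in the $xy$-plane, since $c$ is the height). The lattice is then the $\ZZ$-span of $w_1,w_2,w_3,w_4$; note $w_1=2w_4-w_2$, so a cleaner basis is $(w_4,w_2,w_3)$ or similar, and I would record that the lattice consists of all integer combinations where the $x$- and $y$-coordinates are simultaneously integer or simultaneously half-integer multiples of $a$ and $b$.

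Next I would construct the superbase. Guided by the tetragonal proofs, I would try the four vectors built from $\pm w_4$ and the vertical vector $w_3$, namely something of the form $v_0=\tfrac12(a,b,0)+\text{(vertical correction)}$ together with three vectors $v_1,v_2,v_3$ summing to $-v_0$ with $(v_1,v_2,v_3)$ a basis. The natural candidates are the midpoint-type vectors $\tfrac12(\pm a,\pm b,0)$ for the horizontal part, combined with $\pm w_3$ to control the third coordinate. I would then compute the six conorms $-p_{ij}=v_i\cdot v_j$. Because the three coordinate directions are mutually orthogonal, each dot product splits as a sum of contributions $\pm a^2/4$, $\pm b^2/4$, and a $c$-term, and I expect the diagram to display conorms built from $\tfrac{a^2}{4}$, $\tfrac{b^2-a^2}{4}$ (or similar), and $\tfrac{c^2}{4}$-type entries, with exactly one zero appearing in the diagram (consistent with an $oC$ lattice being combinatorial type giving the claimed figure), plus the mandatory zero at the center.

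The main obstacle, as in the body-centered tetragonal case, is that the naive superbase will be obtuse only for a restricted range of the ratios among $a$, $b$, $c$; outside that range one or more conorms come out negative and I must apply a step of Voronoi reduction together with an automorphism of the Fano plane to restore obtuseness. I would therefore check the sign of each putative conorm as a function of $a,b,c$ (using $a\le b$ and $a<b<\text{or the relevant ordering}$), identify which entry can go negative, and run the reduction on the corresponding line to push the diagram into the stated normal form. Finally, to get the primitive hexagonal lattice I would substitute $b=\sqrt{3}a$ into the resulting conorms and confirm that the extra symmetry forces the appropriate coincidence among the entries, matching the $hP$ specialization asserted in the proposition; this is a direct computation once the general $oC$ diagram is in hand.
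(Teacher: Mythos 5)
Your overall strategy---exhibit an explicit obtuse superbase, compute its pairwise inner products, and match the resulting conorms against Figure~\ref{fig:conorms_oC}---is exactly the paper's. But there is a genuine gap: the superbase is never actually produced, and the candidate family you propose to search inside does not contain one that works uniformly. The paper's entire proof is the choice
\[
v_0=(-a,0,-c),\qquad v_1=\tfrac12(a,-b,0),\qquad v_2=(0,0,c),\qquad v_3=\tfrac12(a,b,0),
\]
whose essential feature is that $v_0=-(w_1+w_3)$ and $v_2=w_3$ have \emph{integral} horizontal parts: they are neither midpoint-type vectors nor ``midpoint plus vertical correction.'' For this choice the nonzero off-diagonal products are $v_0\cdot v_1=v_0\cdot v_3=-a^2/2$, $v_0\cdot v_2=-c^2$, and $v_1\cdot v_3=-(b^2-a^2)/4\le 0$ (using only the labeling convention $a\le b$), so the superbase is obtuse for \emph{every} value of $c$ and no Voronoi reduction or case analysis is ever needed. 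By contrast, in your proposal all four vectors have horizontal part $\tfrac12(\pm a,\pm b)$. Since the four must sum to zero, the generic configuration uses the four distinct vectors $\tfrac12(\pm a,\pm b)$, and then two disjoint pairs among them have horizontal inner product $+(b^2-a^2)/4\ge 0$; compensating both pairs with vertical corrections $\pm c$ forces other pairs to acquire terms $+c^2$, and one checks that obtuseness then requires $b^2-a^2\le 4c^2\le a^2+b^2$. In fact, for $a<b$ no assignment of vertical corrections in your family is obtuse for all $c$ (it always fails for $c$ small or $c$ large). So the ``run Voronoi reduction and apply a Fano automorphism'' step that you defer to is not a minor contingency: it is where all the work would sit, split into parameter regimes, and it is not carried out.

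A second symptom that the computation was not done is that your predicted answer contradicts the figure you are supposed to match. Figure~\ref{fig:conorms_oC} has \emph{two} zeros besides the central one, namely $p_{12}=p_{23}=0$, with nonzero entries $p_{01}=p_{03}=a^2/2$, $p_{02}=c^2$, and $p_{13}=(b^2-a^2)/4$; this is combinatorial type (IV), the hexagonal prism, degenerating to the cuboid (V) when $a=b$. A diagram with exactly one zero off center is type (II), which in this paper arises for body-centered, not base-centered, orthorhombic lattices. Your final step is correct in spirit: once the right diagram is in hand, substituting $b=\sqrt{3}\,a$ produces the coincidence $p_{13}=p_{01}=p_{03}=a^2/2$, which is indeed the primitive hexagonal lattice.
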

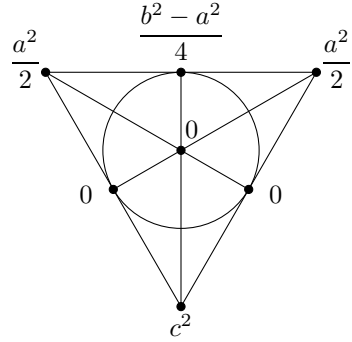
\begin{figure}[h]
	\begin{center}
		\subfigure{
			\begin{tikzpicture}[scale=1.8]
				\draw [-] (-1,0) -- (1,0);
				\draw [-] (-1,0) -- (0,-1.732);
				\draw [-] (1,0) -- (0,-1.732);
				\draw [-] (-1,0) -- (0.5,-0.866);
				\draw [-] (1,0) -- (-0.5,-0.866);
				\draw [-] (0,0) -- (0,-1.732);
				\draw[black] (0,-0.577) circle [x radius=0.577, y radius=0.577];
				\draw[black, fill] (0,0) circle [x radius=0.03, y radius=0.03];
				\draw[black, fill] (1,0) circle [x radius=0.03, y radius=0.03];
				\draw[black, fill] (-1,0) circle [x radius=0.03, y radius=0.03];
				\draw[black, fill] (0,-1.732) circle [x radius=0.03, y radius=0.03];
				\draw[black, fill] (-0.5,-0.866) circle [x radius=0.03, y radius=0.03];
				\draw[black, fill] (0.5,-0.866) circle [x radius=0.03, y radius=0.03];
				\draw[black, fill] (0,-0.577) circle [x radius=0.03, y radius=0.03];
				
				\node at (0.08,-0.42) {$0$};
				\node at (0.7, -0.9) {$0$};
				\node at (0, 0.3) {$\dfrac{b^2-a^2}{4}$};
				\node at (-0.7, -0.9) {$0$};
				\node at (1.15, 0.1) {$\dfrac{a^2}{2}$};
				\node at (0, -1.87) {$c^2$};
				\node at (-1.15, 0.1) {$\dfrac{a^2}{2}$};
			\end{tikzpicture}
		}
	\caption{\label{fig:conorms_oC}Conorm diagram of the base-centered orthorhombic lattice. The primitive hexagonal lattice is obtained by taking $b=\sqrt{3}a$. When $a=b$, we obtain a primitive tetragonal lattice with base of side $a/\sqrt{2}$.}
	\end{center}
\end{figure}
\begin{proof}
	An obtuse superbase is given by
	\begin{align*}
		v_0&=(-a,0,-c)&v_1&=\frac{1}{2}(a,-b,0)&v_2&=(0,0,c)&v_3&=\frac{1}{2}(a,b,0).
	\end{align*}
\end{proof}

\begin{proposition}\label{prop:bodyconorm}
There are three combinatorial types of body-centered orthorhombic lattices depending on whether $a^2+b^2$ is less than, equal to, or greater than $c^2$. Their conorm diagrams are given in Figure~\ref{fig:conorms_oI}.
	\begin{figure}[h]
	\begin{center}
		\subfigure[$a^2+b^2\leq c^2$]{
			\begin{tikzpicture}[scale=1.8]
				\draw [-] (-1,0) -- (1,0);
				\draw [-] (-1,0) -- (0,-1.732);
				\draw [-] (1,0) -- (0,-1.732);
				\draw [-] (-1,0) -- (0.5,-0.866);
				\draw [-] (1,0) -- (-0.5,-0.866);
				\draw [-] (0,0) -- (0,-1.732);
				\draw[black] (0,-0.577) circle [x radius=0.577, y radius=0.577];
				\draw[black, fill] (0,0) circle [x radius=0.03, y radius=0.03];
				\draw[black, fill] (1,0) circle [x radius=0.03, y radius=0.03];
				\draw[black, fill] (-1,0) circle [x radius=0.03, y radius=0.03];
				\draw[black, fill] (0,-1.732) circle [x radius=0.03, y radius=0.03];
				\draw[black, fill] (-0.5,-0.866) circle [x radius=0.03, y radius=0.03];
				\draw[black, fill] (0.5,-0.866) circle [x radius=0.03, y radius=0.03];
				\draw[black, fill] (0,-0.577) circle [x radius=0.03, y radius=0.03];
				
				\node at (0.08,-0.42) {$0$};
				\node at (0.7, -0.9) {$\dfrac{a^2}{2}$};
				\node at (0, 0.15) {$0$};
				\node at (-0.7, -0.9) {$\dfrac{b^2}{2}$};
				\node at (1.15, 0.1) {$\dfrac{a^2}{2}$};
				\node at (0, -2) {$\dfrac{c^2-(a^2+b^2)}{4}$};
				\node at (-1.15, 0.1) {$\dfrac{b^2}{2}$};
			\end{tikzpicture}
		}
		\subfigure[$a^2+b^2\geq c^2$]{
			\begin{tikzpicture}[scale=1.8]
				\draw [-] (-1,0) -- (1,0);
				\draw [-] (-1,0) -- (0,-1.732);
				\draw [-] (1,0) -- (0,-1.732);
				\draw [-] (-1,0) -- (0.5,-0.866);
				\draw [-] (1,0) -- (-0.5,-0.866);
				\draw [-] (0,0) -- (0,-1.732);
				\draw[black] (0,-0.577) circle [x radius=0.577, y radius=0.577];
				\draw[black, fill] (0,0) circle [x radius=0.03, y radius=0.03];
				\draw[black, fill] (1,0) circle [x radius=0.03, y radius=0.03];
				\draw[black, fill] (-1,0) circle [x radius=0.03, y radius=0.03];
				\draw[black, fill] (0,-1.732) circle [x radius=0.03, y radius=0.03];
				\draw[black, fill] (-0.5,-0.866) circle [x radius=0.03, y radius=0.03];
				\draw[black, fill] (0.5,-0.866) circle [x radius=0.03, y radius=0.03];
				\draw[black, fill] (0,-0.577) circle [x radius=0.03, y radius=0.03];
				
				\node at (0.08,-0.42) {$0$};
				\node at (0.7, -0.9) {$P_2$};
				\node at (0, 0.15) {$P_1$};
				\node at (-0.7, -0.9) {$P_3$};
				\node at (1.15, 0.1) {$P_3$};
				\node at (0, -1.87) {$P_1$};
				\node at (-1.15, 0.1) {$P_2$};
				\node at (0, -2) {$\phantom{\dfrac{c^2-(a^2+b^2)}{4}}$};
			\end{tikzpicture}
		}
	\captionsetup{singlelinecheck=off}
	\caption[.]{\label{fig:conorms_oI}Conorm diagrams of body-centered orthorhombic lattices. Here,
	\begin{equation*}
		P_1=\dfrac{-a^2+b^2+c^2}{4}, P_2=\dfrac{a^2-b^2+c^2}{4}, P_3=\dfrac{a^2+b^2-c^2}{4}.
	\end{equation*}
	}
	\end{center}
\end{figure}
\end{proposition}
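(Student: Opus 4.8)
The plan is to mirror the proof of the body-centered tetragonal case, replacing the square base by a rectangular one. I would place the prism with edge vectors $w_1=(a,0,0)$, $w_2=(0,b,0)$, $w_3=(0,0,c)$ and body center $w_4=\tfrac12(a,b,c)$, so that the body-centered orthorhombic lattice is the $\ZZ$-span of $w_1,w_2,w_3,w_4$. As the candidate obtuse superbase I would take the four ``even-sign'' vectors
\[
	v_0=\tfrac12(a,b,c),\quad v_1=\tfrac12(-a,-b,c),\quad v_2=\tfrac12(a,-b,-c),\quad v_3=\tfrac12(-a,b,-c).
\]
These sum to $0$, and exactly the same relations as in the body-centered tetragonal case (with $(c,a,a)$ replaced by $(a,b,c)$) hold, namely $w_1=-v_1-v_3$, $w_2=-v_1-v_2$, $w_3=-v_2-v_3$, $w_4=v_0=-v_1-v_2-v_3$, and conversely $v_1=w_3-w_4$, $v_2=w_1-w_4$, $v_3=w_2-w_4$. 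Thus $(v_0,v_1,v_2,v_3)$ is a superbase of the lattice.

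Next I would compute the putative conorms $-p_{ij}=v_i\cdot v_j$. A direct calculation gives $p_{01}=p_{23}=P_3$, $p_{02}=p_{13}=P_1$, and $p_{03}=p_{12}=P_2$, with $P_1,P_2,P_3$ as in the statement, matching Figure~\ref{fig:conorms_oI}(b). Since $a<b<c$, both $P_1=\frac{-a^2+b^2+c^2}{4}$ and $P_2=\frac{a^2-b^2+c^2}{4}$ are strictly positive, whereas $P_3=\frac{a^2+b^2-c^2}{4}$ is $\geq0$ exactly when $a^2+b^2\geq c^2$. Hence the superbase is already obtuse precisely in the regime $a^2+b^2\geq c^2$, which establishes the diagram of Figure~\ref{fig:conorms_oI}(b), with the central conorm equal to $0$ as in the earlier diagrams.

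For the remaining regime $a^2+b^2<c^2$, the only negative putative conorms are $p_{01}=p_{23}=P_3<0$, so I would run one step of Conway--Sloane Voronoi reduction at the pair $(0,1)$: replace $v_1$ by $-v_1$ and add $v_1$ to each of $v_2,v_3$, keeping $v_0$ fixed. This produces the new superbase $u_0=v_0$, $u_1=\tfrac12(a,b,-c)$, $u_2=(0,-b,0)$, $u_3=(-a,0,0)$, whose conorms I would compute to be $\frac{c^2-(a^2+b^2)}{4}$, two copies of $\frac{a^2}{2}$, two copies of $\frac{b^2}{2}$, and a single $0$; since $c^2>a^2+b^2$ these are all non-negative, so the new superbase is obtuse. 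Comparing the zero pattern (the vanishing conorm is collinear, on a Fano line, with the distinguished conorm $\frac{c^2-(a^2+b^2)}{4}$) with Figure~\ref{fig:conorms_oI}(a) shows that an appropriate automorphism of the Fano plane carries one diagram to the other, giving Figure~\ref{fig:conorms_oI}(a).

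Finally, reading off the configuration of zeros identifies the three combinatorial types: when $a^2+b^2>c^2$ only the central conorm vanishes (truncated octahedron); when $a^2+b^2<c^2$ exactly one further conorm vanishes (rhombo-hexagonal dodecahedron); and the boundary case $a^2+b^2=c^2$ is the common specialization $P_3=0$ of the two diagrams, where three collinear conorms vanish (rhombic dodecahedron). I expect the only genuinely delicate step to be the bookkeeping in the $a^2+b^2<c^2$ case: one must carry out the Voronoi reduction move correctly and exhibit the Fano-plane automorphism matching the reduced diagram to Figure~\ref{fig:conorms_oI}(a), whereas the conorm computations themselves are routine.
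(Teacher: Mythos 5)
Your proposal is correct and takes essentially the same route as the paper: the paper's proof simply exhibits the two obtuse superbases, and these are exactly the superbases you produce --- your ``even-sign'' quadruple is the paper's superbase for $a^2+b^2\geq c^2$, and the quadruple $(u_0,u_1,u_2,u_3)$ you obtain from one Voronoi-reduction step is precisely the paper's superbase for $a^2+b^2\leq c^2$ up to reindexing, with conorms matching Figure~\ref{fig:conorms_oI} up to an automorphism of the Fano plane. Your version supplies details the paper leaves implicit (the superbase relations, the reduction step deriving one superbase from the other, and the reading-off of the three combinatorial types), mirroring the paper's own treatment of the body-centered tetragonal case.
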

\begin{proof}
	When $a^2+b^2\leq c^2$, an obtuse superbase is given by
	\begin{align*}
		v_0&=\frac{1}{2}(a,b,c)&v_1&=(-a,0,0)&v_2&=\frac{1}{2}(a,b,-c)&v_3&=(0,-b,0).
	\end{align*}
	When $a^2+b^2\geq c^2$, an obtuse superbase is given by
	\begin{align*}
		v_0&=\frac{1}{2}(a,b,c)&v_1&=\frac{1}{2}(-a,-b,c)&v_2&=\frac{1}{2}(a,-b,-c)&v_3&=\frac{1}{2}(-a,b,-c).
	\end{align*}
\end{proof}

\subsection{Measures on spaces of orthorhombic lattices}
The orthorhombic lattices discussed in the previous section will arise as shapes of $V_4$-quartic fields and we will show that the shapes of $V_4$ -quartic fields are equidistributed within certain spaces of orthorhombic lattices. Such a statement requires that we define measures on these spaces. These measures will be inherited from a natural group action by diagonal matrices. This material is only needed in \S\ref{sec:V4equid} for the proof of Theorem~\ref{thm:V4equid}.

We begin by parametrizing the space of shapes of base-centered orthorhombic lattices, i.e.\ we study these lattices up to rotations, reflections, and scaling. Let $\Lambda$ be a base-centered orthorhombic lattice. After some possible rotations (that align the sides of the rectangular prism inside this lattice with the coordinate axes), we may assume that
\begin{align*}
		v_1&=\frac{1}{2}(a,-b,0)&v_2&=(0,0,c)&v_3&=\frac{1}{2}(a,b,0),
\end{align*} is a basis of $\Lambda$ as in Proposition~\ref{prop:baseconorm}. If $a>b$, we may apply a reflection so that, without loss of generality, we may assume that $a\leq b$. Since the edges of length $a$ and $b$ are distinguished from $c$ (since they form the base of the rectangular prism), it is natural to scale $\Lambda$ so that its height is $1$. In other words, we will take as parameters $a/c$ and $b/c$. Accordingly, let
\[
	G_{oC}(x,y):=\vect{\dfrac{x^2+y^2}{4}&0&\dfrac{x^2-y^2}{4}\\[5pt]0&1&0\\\dfrac{x^2-y^2}{4}&0&\dfrac{x^2+y^2}{4}},
\]
\[
	\mc{G}_{oC}:=\left\{G_{oC}(x,y):0<x\leq y\right\}\subseteq\mc{G},
\]
and
\[
	\mc{S}_{oC}:=\left\{\gl_3(\ZZ)\cdot G\cdot\RR^\times:G\in\mc{G}_{oC}\right\}.
\]
We now show that these Gram matrices give a complete set of representatives of the shapes of base-centered orthorhombic lattices.
\begin{proposition}
	The map $\mc{G}_{oC}\rightarrow\mc{S}_{oC}\subseteq\gl_3(\ZZ)\backslash\mc{G}/\RR^\times$ sending $G_{oC}(x,y)$ to
	\[
		\sh_{oC}(x,y):=\gl_3(\ZZ)\cdot G_{oC}(x,y)\cdot\RR^\times
	\]
	is a bijection and $\mc{S}_{oC}$ is the space of shapes of base-centered orthorhombic lattices. If $\Lambda$ has base with sides of length $a$ and $b$, and height $c$, then the shape of $\Lambda$ is $\sh_{oC}(a/c,b/c)$ or $\sh_{oC}(b/c,a/c)$ according to whether $a\leq b$ or $a\geq b$.
\end{proposition}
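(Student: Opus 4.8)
The plan is to establish the two assertions separately: that $\mc S_{oC}$ coincides with the set of shapes of base-centered orthorhombic lattices (which will also yield the final formula), and that the parametrization is injective. The first is a direct computation, while the second --- the genuine content --- is handled through the conorm diagram of Proposition~\ref{prop:baseconorm} together with the Conway--Sloane criterion recalled above, namely that two rank $3$ lattices have the same shape precisely when their conorm diagrams agree up to an automorphism of the Fano plane and a scaling.

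For the identification and the formula, I would start from a base-centered orthorhombic lattice $\Lambda$ with base sides $a\leq b$ and height $c$, and take the basis $v_1=\tfrac12(a,-b,0)$, $v_2=(0,0,c)$, $v_3=\tfrac12(a,b,0)$ furnished by Proposition~\ref{prop:baseconorm}. A short computation gives its Gram matrix, and scaling the lattice by $1/c$ (equivalently, multiplying the Gram matrix by $c^{-2}$) produces exactly $G_{oC}(a/c,b/c)$, with $a/c\leq b/c$ placing it in $\mc G_{oC}$; hence $\sh(\Lambda)=\sh_{oC}(a/c,b/c)$. When $a\geq b$ one first applies the reflection interchanging the two base axes, which is orthogonal and therefore shape-preserving, to obtain $\sh_{oC}(b/c,a/c)$. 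This proves the final sentence. Conversely, each $G_{oC}(x,y)$ is visibly the Gram matrix of such a lattice (base sides $x,y$, height $1$), so $\mc S_{oC}$ is precisely the space of shapes of base-centered orthorhombic lattices and the map is onto $\mc S_{oC}$ by definition.

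The main work is injectivity. Fixing $c=1$, the conorm diagram of $G_{oC}(x,y)$ is the one in Figure~\ref{fig:conorms_oC} with $a=x$ and $b=y$: its zeros sit at the centre and at $p_{12},p_{23}$ (a non-collinear triple, so the combinatorial type is IV), while the nonzero entries are $p_{01}=p_{03}=x^2/2$, $p_{13}=(y^2-x^2)/4$, and $p_{02}=1$. The key observation is that the three zeros form a triangle in the Fano plane whose three sides cover six of the seven points, so $p_{02}$ is the unique point lying off every side; it is therefore fixed by every Fano automorphism preserving the zero-set, whereas the stabilizer of that triangle acts as the full symmetric group $S_3$ on the remaining three points $p_{01},p_{03},p_{13}$. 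Consequently the shape determines, up to a common scaling and nothing more, the value at the distinguished point $p_{02}$ together with the multiset $\{x^2/2,\,x^2/2,\,(y^2-x^2)/4\}$.

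To recover $(x,y)$, I would normalize using the distinguished value $p_{02}=1$: since a matching Fano automorphism must carry distinguished point to distinguished point, comparing two such diagrams forces the scaling to be trivial. Then $x^2/2$ is read off as the value occurring with multiplicity at least two in the multiset (it always does, as $p_{01}=p_{03}$), giving $x$, and the remaining value equals $(y^2-x^2)/4$ and hence gives $y$; with the convention $0<x\leq y$ this assignment is unambiguous. I expect the delicate points to be precisely this Fano-automorphism bookkeeping --- verifying that the permutations fixing the zero-configuration cannot identify diagrams coming from distinct $(x,y)$ --- and the degenerate boundaries: the hexagonal case $y=\sqrt3\,x$, where all three side-values coincide (one checks the structural relations $x^2/2=v$, $(y^2-x^2)/4=v$ still pin down $(x,y)$ uniquely), and the tetragonal case $x=y$, where $p_{13}$ becomes a fourth zero and the combinatorial type degenerates, which should be treated by noting that it corresponds to a primitive tetragonal lattice.
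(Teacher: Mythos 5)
Your proposal is correct and takes essentially the same approach as the paper: surjectivity and the final formula come from the basis in Proposition~\ref{prop:baseconorm} after rotating, reflecting, and rescaling, while injectivity uses the Conway--Sloane criterion on the conorm diagram, pinning down the point $p_{02}$ (whose label $1$ forces the scaling to be trivial) and then reading $x$ off the repeated label and $y$ off the remaining one. The only cosmetic difference is that you identify $p_{02}$ as the unique point off the sides of the triangle of zeros, whereas the paper identifies it as the unique nonzero-labelled point off the unique line all of whose labels are nonzero; your explicit attention to the boundary cases $y=\sqrt{3}\,x$ and $x=y$ is, if anything, slightly more careful than the paper's own proof, which tacitly assumes $x<y$.
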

\begin{proof}
	The discussion above this proposition describes how, starting from an arbitrary base-centered orthorhombic lattice $\Lambda$ whose base has sides of length $a$ and $b$ and whose height is $c$, we may apply rotations, reflections, and scalings to get an equivalent lattice whose Gram matrix is $G_{oC}(a/c,b/c)$ or $G_{oC}(b/c,a/c)$ according to whether $a\leq b$ or $a\geq b$. Conversely, one can write down the lattice whose shape is a given element of $\mc{S}_{oC}$. This shows that $\mc{S}_{oC}$ is indeed the space of shapes of base-centered orthorhombic lattices.

	Now suppose that $\gl_3(\ZZ)\cdot G_{oC}(x_1,y_1)\cdot\RR^\times=\gl_3(\ZZ)\cdot G_{oC}(x_2,y_2)\cdot\RR^\times$ (where $x_1\leq y_1$ and $x_2\leq y_2$). These two Gram matrices correspond to lattices whose conorm diagrams $\mc{C}_1$ and $\mc{C}_2$ are given as in Figure~\ref{fig:conorms_oC} with $a=x_i$, $b=y_i$, and $c=1$. There is a unique line in $\mc{C}_i$ all of whose points have a non-zero label, so that any automorphism of the Fano plane bringing $\mc{C}_1$ to $\mc{C}_2$ must fix this line. There is a unique point in $\mc{C}_1$ not on this line with a non-zero label, so that this point must also be fixed. Since the label on this point in both $\mc{C}_1$ and $\mc{C}_2$ is $1$, no scaling can occur. At least two of the non-zero labels on the fixed line in $\mc{C}_1$ are equal and so must match the (at least) two equal labels on the fixed line in $\mc{C}_2$. This implies that $x_1^2=x_2^2$. Since $x_i>0$, this forces them to be equal. The remaining non-zero entry then forces $y_1=y_2$.
\end{proof}

\begin{definition}
	We define the measure $\mu_{oC}$ on $\mc{S}_{oC}$ by
	\[
		d\mu_{oC}(x,y):=d^\times xd^\times y=\dfrac{dxdy}{xy},
	\]
	where $x$ and $y$ are the coordinates in $\sh_{oC}(x,y)$ and $dxdy$ denotes the usual Lebesgue measure on a subset of $\RR^2$.
\end{definition}
\begin{remark}\label{rem:invariantmeasure}
	The motivation for this definition is that this measure is the one inherited from the natural group action on the space of shapes. For instance, the whole space $\gl_d(\ZZ)\backslash\mc{G}/\RR^\times$ inherits a natural measure from the action of $\gl_d(\RR)$ on $\mc{G}$. For $\mc{S}_{oC}$, note that $\mc{G}_{oC}$ is a ``translate'' of the Gram matrices of primitive orthorhombic lattices. Indeed, let
	\[
		G(x,y):=\vect{x^2\\&y^2\\&&1}
	\]
	and let
	\[
		P_{oC}:=\vect{1/2&1/2&0\\0&0&1\\1/2&-1/2&0}.
	\]
	Then,
	\[
		P_{oC}\cdot G(x,y)=G_{oC}(x,y).
	\]
	The set of Gram matrices of the form $G(x,y)$ is an orbit of the group
	\[
		\mc{T}:=\left\{G(a,b):a,b\in\RR_{>0}\right\}\leq\gl_3(\RR);
	\]
	indeed $G(\sqrt{x},\sqrt{y})\cdot G(1,1)=G(x,y)$. Similarly, the set of Gram matrices of the form $G_{oC}(x,y)$ is an orbit of the isomorphic group $\mc{T}_{oC}:=P\mc{T}P^{-1}$; indeed, for $T=G(\sqrt{x},\sqrt{y})$,
	\begin{align*}
		PTP^{-1}\cdot G_{oC}(1,1)	&=PTP^{-1}(PG(1,1)P^{T})P^{-T}T^TP^T\\
								&=PTG(1,1)T^TP^T\\
								&=P\cdot (T\cdot G(1,1))\\
								&=G_{oC}(x,y).
	\end{align*}
	The Haar measure on the group $\mc{T}$ (and hence also $\mc{T}_{oC}$) is (any positive multiple of) $d^\times x d^\times y$, where the parameters $x,y$ give the element
	\[
		\vect{x\\&y\\&&1}\in\mc{T}.
	\]
	It is then natural to transfer this measure over to the subset $\mc{G}_{oC}$ of the (free) orbit of $\mc{T}_{oC}$, as we have done in the definition above.
\end{remark}

Recall that equidistribution is a statement about weak convergence of a sequence of measures and recall that a sequence of measures $\{\mu_n\}$ on $\mc{S}_{oC}$ converges weakly to $\mu_{oC}$ if for all $f\in C_c(\mc{S}_{oC})$ (the continuous functions with compact support),
\[
	\lim_{n\rightarrow\infty}\int_{\mc{S}_oC}fd\mu_n=\int_{\mc{S}_oC}fd\mu_{oC}.
\]
Since we will be counting number fields in an explicit way below, we must simplify our lives when it comes to which kinds of functions we need to test this convergence on. For two positive real numbers $R_1<R_2$, let
\[
	W_{oC}(R_1,R_2)=\left\{\sh_{oC}(x,y):R_1\leq x\leq y<R_2\right\}
\]
and let $\chi_{oC,R_1,R_2}$ denote its characteristic function. We now show that it is sufficient to test these functions for the purposes of proving equidistribution.
\begin{lemma}\label{lem:WoCsuffice}
	Suppose that
	\[
		\lim_{n\rightarrow\infty}\int_{\mc{S}_{oC}}\chi_{oC,R_1,R_2}d\mu_n=\int_{\mc{S}_{oC}}\chi_{oC,R_1,R_2}d\mu_{oC}.
	\]
	for all $R_1,R_2\in\RR_{>0}$ with $R_1<R_2$. Then, $\mu_n$ converges weakly to $\mu_{oC}$.
\end{lemma}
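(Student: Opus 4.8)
The plan is a density argument: reduce weak convergence against arbitrary $f\in C_c(\mc{S}_{oC})$ to convergence against the box indicators $\chi_{oC,R_1,R_2}$, by showing that step functions built from these indicators approximate $f$ uniformly. Working in the coordinates of $\sh_{oC}$, write $\Omega:=\{(x,y):0<x\le y\}$ for the parameter region, so that functions on $\mc{S}_{oC}$ are functions on $\Omega$. The crucial observation, which the geometry of $\Omega$ makes clean, is that $\chi_{oC,R_1,R_2}$ \emph{factors} on $\Omega$: a point satisfies $R_1\le x\le y<R_2$ exactly when $x\ge R_1$ and $y<R_2$ (the inequality $x\le y$ holding automatically), so $\chi_{oC,R_1,R_2}(x,y)=\mathbf{1}[x\ge R_1]\,\mathbf{1}[y<R_2]$ on $\Omega$. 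Consequently, for any grid $u_0<u_1<\cdots<u_N$ of positive reals, the four-term combination $\chi_{oC,u_i,u_{j+1}}-\chi_{oC,u_{i+1},u_{j+1}}-\chi_{oC,u_i,u_j}+\chi_{oC,u_{i+1},u_j}$ (with the convention that a symbol is $0$ whenever its first index is $\ge$ its second) equals, on $\Omega$, the \emph{exact} cell indicator $\Delta_{ij}:=\mathbf{1}[u_i\le x<u_{i+1}]\,\mathbf{1}[u_j\le y<u_{j+1}]$. Thus each $\Delta_{ij}$ lies in the $\ZZ$-linear span of the $\chi_{oC,\cdot,\cdot}$, and the family $\{\Delta_{ij}:i\le j\}$ partitions $W_{oC}(u_0,u_N)$ (cells below the diagonal being empty in $\Omega$, diagonal cells being small triangles).

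Given $f\in C_c(\mc{S}_{oC})$, I would note that its support, being compact in $\Omega$, lies in $\{a\le x\le y\le b\}$ for some $0<a<b$, hence in $W_{oC}(u_0,u_N)$ once $u_0<a$ and $u_N>b$. Refining the grid to mesh $\delta$ and setting $s_\delta:=\sum_{i\le j}f(p_{ij})\,\Delta_{ij}$ for sample points $p_{ij}$ in the cells, uniform continuity of $f$ gives $\|f-s_\delta\|_\infty\le\omega_f(\delta\sqrt2)\to0$, where $\omega_f$ is the modulus of continuity of $f$, and both $f$ and $s_\delta$ are supported in $W_{oC}(u_0,u_N)$. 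Since each $\Delta_{ij}$ is a fixed finite combination of box indicators, the hypothesis yields $\int\Delta_{ij}\,d\mu_n\to\int\Delta_{ij}\,d\mu_{oC}$; summing over the finitely many cells gives $\int s_\delta\,d\mu_n\to\int s_\delta\,d\mu_{oC}$ for each fixed $\delta$. Note that, because the cell indicators are produced \emph{exactly} (not merely approximately) as linear combinations, no continuity-set hypothesis on $\mu_{oC}$ is needed at this stage.

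I would then assemble the estimate
\[
	\left|\int f\,d\mu_n-\int f\,d\mu_{oC}\right|\le\left|\int(f-s_\delta)\,d\mu_n\right|+\left|\int s_\delta\,d(\mu_n-\mu_{oC})\right|+\left|\int(s_\delta-f)\,d\mu_{oC}\right|.
\]
The middle term tends to $0$ by the previous paragraph, while the outer two are bounded by $\|f-s_\delta\|_\infty$ times $\mu_n(W_{oC}(u_0,u_N))$ and $\mu_{oC}(W_{oC}(u_0,u_N))$, respectively. The single point requiring care is the uniform bound $\sup_n\mu_n(W_{oC}(u_0,u_N))<\infty$: this is exactly supplied by the hypothesis, since $\mu_n(W_{oC}(u_0,u_N))=\int\chi_{oC,u_0,u_N}\,d\mu_n$ converges to the finite value $\mu_{oC}(W_{oC}(u_0,u_N))$ (finiteness of the latter being immediate from $\int\!\!\int_{u_0\le x\le y\le u_N}\frac{dx\,dy}{xy}<\infty$), and a convergent sequence is bounded. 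Letting $n\to\infty$ and then $\delta\to0$ forces the right-hand side to $0$, giving the desired weak convergence. I expect no genuine obstacle beyond this bookkeeping; the whole argument hinges on the product factorization on $\Omega$, which is what upgrades the inclusion--exclusion to exact cell indicators.
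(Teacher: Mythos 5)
Your proof is correct, and its combinatorial core is exactly the paper's: on the parameter region $\{0<x\le y\}$ the indicator $\chi_{oC,R_1,R_2}$ factors as $\mathbf{1}[x\ge R_1]\,\mathbf{1}[y<R_2]$, so grid-cell indicators are \emph{exact} four-term alternating sums of the $\chi_{oC,\cdot,\cdot}$; your identity for $\Delta_{ij}$ is literally the paper's inclusion--exclusion $\chi_{\mc{C}}=\chi_{oC,x_0,y_0}-\chi_{oC,x_0+r,y_0}-\chi_{oC,x_0,y_0-r}+\chi_{oC,x_0+r,y_0-r}$. Where you genuinely diverge is in the analytic glue that reduces general $f\in C_c(\mc{S}_{oC})$ to these cells. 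The paper sandwiches $f$ between step functions $f_1\le f\le f_2$ with $\int(f_2-f_1)\,d\mu_{oC}<\epsilon$ and cites the argument of \cite[Theorem~3.1]{PureCubicShapes}: positivity of the measures $\mu_n$ then squeezes $\limsup$ and $\liminf$ of $\int f\,d\mu_n$ without ever needing uniform control of the $\mu_n$ themselves. You instead use a sup-norm approximation $s_\delta$ and a three-term triangle inequality, which does require the bound $\sup_n\mu_n(W_{oC}(u_0,u_N))<\infty$; your observation that this bound is itself forced by the hypothesis (the sequence $\int\chi_{oC,u_0,u_N}\,d\mu_n$ converges to the finite value $\mu_{oC}(W_{oC}(u_0,u_N))$, hence is bounded) is correct and is precisely what closes your variant. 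The trade-off: your argument is fully self-contained and needs nothing about $\mu_{oC}$ beyond its finiteness on boxes (no appeal to absolute continuity with respect to Lebesgue measure, which the paper invokes to justify its two-sided approximation), while the paper's one-sided sandwich exploits positivity to bypass the uniform mass bound entirely and outsources the bookkeeping to an earlier reference. Both are valid proofs of the lemma.
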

\begin{proof}
	Recall that for the usual Lebesgue measure on $\RR_{>0}^2$, any continuous function $f$ with compact support can be`` approximated'' above and below by two ``step functions'', i.e.\ for every $f$ and for every $\epsilon>0$, there are two functions $f_1$ and $f_2$ that are finite linear combinations of characteristic functions of squares such that $f_1\leq f\leq f_2$ and
	\[
		\int_{\RR_{>0}^2}(f_2-f_1)dxdy<\epsilon.
	\]
	Since the measure $\mu_{oC}$ is absolutely continuous with respect the Lebesgue measure on $\RR_{>0}^2$, this is still true for it, where the ``squares'' are replaced by their intersection with the set $\{x\leq y\}$. A straightforward proof as in \cite[Theorem~3.1]{PureCubicShapes} then shows that it suffices to test convergence on these ``squares''. To prove this lemma, it now suffices to show that the characteristic functions of these ``squares'' are finite linear combinations of the $\chi_{oC,R_1,R_2}$.
	
	So, let $\mc{C}$ be a ``square'' in $\mc{S}_{oC}$ whose vertices are $(x_0,y_0),(x_0+r,y_0),(x_0+r,y_0-r),$ and ${(x_0,y_0-r)}$, and let $\chi_\mc{C}$ denote its characteristic function. A simple inclusion-exclusion shows that
	\[
		\chi_\mc{C}=\chi_{oC,x_0,y_0}-\chi_{oC,x_0+r,y_0}-\chi_{oC,x_0,y_0-r}+\chi_{oC,x_0+r,y_0-r}.
	\]
\end{proof}

The following result will therefore be useful in \S\ref{sec:V4equid} below.
\begin{lemma}\label{lem:WoCmeasure}
	For $R_1<R_2\in\RR_{>0}$,
	\[
		\mu_{oC}(W_{oC}(R_1,R_2))=\frac{1}{2}(\log(R_2)-\log(R_1))^2.
	\]
\end{lemma}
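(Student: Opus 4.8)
The plan is to evaluate $\mu_{oC}(W_{oC}(R_1,R_2))$ directly as a double integral and then convert the multiplicative (Haar-type) measure $d^\times x\,d^\times y$ into ordinary Lebesgue measure by a logarithmic change of variables. Unwinding the definitions of $W_{oC}(R_1,R_2)$ and of $\mu_{oC}$, we have
\[
	\mu_{oC}(W_{oC}(R_1,R_2)) = \iint_{R_1\leq x\leq y< R_2}\frac{dx\,dy}{xy}.
\]

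Next I would substitute $u=\log x$ and $v=\log y$, so that $d^\times x = dx/x = du$ and $d^\times y = dy/y = dv$. Since $\log$ is a smooth, strictly increasing bijection of $\RR_{>0}$ onto $\RR$, this change of variables carries the region $\{R_1\leq x\leq y< R_2\}$ in the $(x,y)$-plane bijectively onto the region $\{\log R_1\leq u\leq v< \log R_2\}$ in the $(u,v)$-plane, and it transforms the measure $\tfrac{dx\,dy}{xy}$ into the Lebesgue measure $du\,dv$. The integral therefore reduces to computing the plain area
\[
	\iint_{\log R_1\leq u\leq v< \log R_2}du\,dv.
\]

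Finally, this region is a right triangle: it consists of the points of the square $[\log R_1,\log R_2)^2$ lying on or below the diagonal $u=v$. As the diagonal has measure zero (and the strict versus non-strict boundary inequalities are likewise measure-zero), this region is exactly half the square, of area $\tfrac{1}{2}(\log R_2-\log R_1)^2$, which is the asserted value. There is essentially no obstacle in this argument: the only point deserving a word of care is that the boundary contributions (in particular the diagonal $u=v$) are negligible, so the strict inequality in the definition of $W_{oC}(R_1,R_2)$ does not affect the answer. This elementary $\log^2$ computation is precisely what will feed into \S\ref{sec:V4equid} to produce one of the two logarithmic factors appearing in the counting asymptotics for $V_4$-quartic fields.
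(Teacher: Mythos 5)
Your proof is correct. It differs from the paper's evaluation in a small but genuine way: the paper computes the same double integral $\int_{R_1}^{R_2}\int_{R_1}^{y}\frac{dx\,dy}{xy}$ by direct iterated integration --- the inner $x$-integral yields $\frac{\log(y)-\log(R_1)}{y}$, then a substitution $u=\log y$ and some algebra produce the square of the difference --- whereas you perform the logarithmic change of variables $u=\log x$, $v=\log y$ globally at the outset, so that the multiplicative measure becomes Lebesgue measure and the region becomes the portion of the square $[\log R_1,\log R_2)^2$ on one side of the diagonal; by the $u\leftrightarrow v$ symmetry (and the fact that the diagonal is null) this is exactly half the square. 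Your route buys conceptual transparency: it makes visible \emph{why} the answer is $\tfrac12\bigl(\log(R_2/R_1)\bigr)^2$, namely it is literally half the area of a square of side $\log(R_2/R_1)$, and it dispenses with the closing algebraic simplification. The paper's route is a routine calculus computation requiring no discussion of change of variables or boundary sets. Both are complete; your remark that the strict/non-strict inequalities and the diagonal contribute measure zero is the only point of care in your version, and you address it.
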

\begin{proof}
	We have that
	\begin{align}
		\int_{W_{oC}(R_1,R_2)}d\mu_{oC}	&=\int_{R_1}^{R_2}\int_{R_1}^y\dfrac{1}{xy}dxdy\label{eqn:WoCintegralFirst}\\
										&=\int_{R_1}^{R_2}\dfrac{\log(y)-\log(R_1)}{y}dy\nonumber\\
										&=\int_{\log(R_1)}^{\log(R_2)}udu-\log(R_1)(\log(R_2)-\log(R_1))\nonumber\\
										&=\dfrac{(\log R_2)^2-(\log R_1)^2}{2}-\log(R_1)\log(R_2)+(\log R_1)^2\nonumber\\
										&=\frac{1}{2}(\log(R_2)-\log(R_1))^2\nonumber.
	\end{align}
\end{proof}

We now proceed analogously for body-centered lattices. Let $\Lambda$ be a body-centered orthorhombic lattice. After some possible rotations as above, we may assume that
\begin{align*}
		v_1&=\frac{1}{2}(-a,-b,c)&v_2&=\frac{1}{2}(a,-b,-c)&v_3&=\frac{1}{2}(-a,b,-c)
\end{align*} is a basis of $\Lambda$ as in Proposition~\ref{prop:bodyconorm}.\footnote{Even if $a^2+b^2\leq c^2$, this is still a basis, though not part of an obtuse subperbase.} By applying reflections, we may assume, without loss of generality, that $a<b<c$. We take as parameters $a/c$ and $b/c$ like above. Accordingly, let
\[
	G_{oI}(x,y):=\vect{\dfrac{x^2+y^2+1}{4}&\dfrac{-x^2+y^2-1}{4}&\dfrac{x^2-y^2-1}{4}\\[5pt]\dfrac{-x^2+y^2-1}{4}&\dfrac{x^2+y^2+1}{4}&\dfrac{-x^2-y^2+1}{4}\\[5pt]\dfrac{x^2-y^2-1}{4}&\dfrac{-x^2-y^2+1}{4}&\dfrac{x^2+y^2+1}{4}},
\]
\[
	\mc{G}_{oI}:=\left\{G_{oI}(x,y):0<x<y<1\right\}\subseteq\mc{G},
\]
and
\[
	\mc{S}_{oI}:=\left\{\gl_3(\ZZ)\cdot G\cdot\RR^\times:G\in\mc{G}_{oI}\right\}.
\]
Similarly to above, we show that $\mc{G}_{oI}$ is  a complete set of representatives of the shapes of body-centered orthorhombic lattices.
\begin{proposition}
	The map $\mc{G}_{oI}\rightarrow\mc{S}_{oI}\subseteq\gl_3(\ZZ)\backslash\mc{G}/\RR^\times$ sending $G_{oI}(x,y)$ to
	\[
		\sh_{oI}(x,y):=\gl_3(\ZZ)\cdot G_{oI}(x,y)\cdot\RR^\times
	\]
	is a bijection and $\mc{S}_{oI}$ is the space of shapes of body-centered orthorhombic lattices. If $\Lambda$ has base with sides of length $a,b,$ and $c$, then its shape is $\sh_{oI}(x,y)$ for exactly one pair
	\[
		(x,y)\in\left\{(a/c,b/c),(b/c,a/c),(a/b,c/b),(c/b,a/b),(b/a,c/a),(c/a,b/a)\right\},
	\]
	whichever gives $x<y<1$.
\end{proposition}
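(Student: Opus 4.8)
The plan is to mirror the base-centered case, reducing the proposition to a single injectivity statement and extracting the side-length ratios from the conorm diagram of Proposition~\ref{prop:bodyconorm}. Surjectivity and the normal form are immediate from the discussion preceding the proposition: given a body-centered orthorhombic lattice with side lengths $a<b<c$, rotating to align its prism with the coordinate axes and scaling so that $c=1$ produces the basis $v_1,v_2,v_3$ of Proposition~\ref{prop:bodyconorm}, whose Gram matrix is exactly $G_{oI}(a/c,b/c)$ with $(a/c,b/c)\in\mc{G}_{oI}$. Hence every such shape is $\sh_{oI}(x,y)$ for some $(x,y)$ with $0<x<y<1$, and $\mc{S}_{oI}$ is precisely the space of shapes of body-centered orthorhombic lattices. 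It then remains to prove injectivity, for which it suffices to show that the shape determines the ratio $a^2:b^2:c^2$: once this is known, the lattices attached to $(x_1,y_1)$ and $(x_2,y_2)$ (both normalized to $c=1$) satisfy $x_1^2:y_1^2:1=x_2^2:y_2^2:1$, forcing $(x_1,y_1)=(x_2,y_2)$.

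To recover $a^2:b^2:c^2$ I will use the criterion from \cite{Conway-Sloane} that two lattices have the same shape precisely when some automorphism of the Fano plane carries one conorm diagram to a scalar multiple of the other. Since automorphisms and scaling preserve the pattern of zero labels, the combinatorial type is a shape invariant; by Proposition~\ref{prop:bodyconorm} this pins down the sign of $a^2+b^2-c^2$ (equivalently of $x^2+y^2-1$), so shape-equal lattices fall under the same one of the diagrams in Figure~\ref{fig:conorms_oI}. In the generic type (Figure~\ref{fig:conorms_oI}(b), where $a^2+b^2>c^2$) the only zero sits at the always-zero central point, and the three lines through that point carry the three complementary pairs of conorms, each pair labelled by a common value $P_1$, $P_2$, or $P_3$. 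An admissible automorphism fixes the central point, hence permutes these three lines and so permutes the value-classes; matching labels up to a scalar therefore only permutes and rescales $\{P_1,P_2,P_3\}$. As $P_1,P_2,P_3$ are pairwise distinct (any coincidence would force two of $a,b,c$ to be equal) and satisfy $a^2=2(P_2+P_3)$, $b^2=2(P_1+P_3)$, $c^2=2(P_1+P_2)$, the unordered set $\{P_1,P_2,P_3\}$ up to scaling determines $a^2:b^2:c^2$. The degenerate types (Figure~\ref{fig:conorms_oI}(a), where $a^2+b^2\leq c^2$) carry additional zeros; there I will argue exactly as in the base-centered proposition, using the distinguished zero/nonzero positions to force the automorphism and then reading $a^2$ and $b^2$ off the two repeated labels $\tfrac{a^2}{2}$, $\tfrac{b^2}{2}$ and the remaining quantity off the singleton label. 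In every case the ratio $a^2:b^2:c^2$ is recovered, completing injectivity.

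For the six-pair assertion, the six listed pairs are exactly the images of $(a,b,c)$ under the six relabelings of the three axes, each realizable by a rotation or reflection of the rectangular prism and hence shape-preserving. The condition $x<y<1$ selects the unique relabeling whose denominator is the largest side and whose numerators are the remaining two in increasing order; since $a,b,c$ are distinct this picks out exactly one pair, and for that pair the normal-form computation of the first paragraph shows the shape equals $\sh_{oI}(x,y)$.

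The main obstacle is the generic type (Figure~\ref{fig:conorms_oI}(b)): unlike the base-centered case and the degenerate types, its conorm diagram has no zeros other than the structural central one, so there is no zero-pattern to rigidify the admissible automorphisms. The argument must instead lean on the equal-label structure along the three lines through the centre, together with the strict inequalities $P_1>P_2>P_3$ coming from $a<b<c$, to show that any admissible automorphism merely permutes the three conorm values; verifying that these three values are genuinely distinct, and that this forces $a^2:b^2:c^2$ to agree, is the crux.
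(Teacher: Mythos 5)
Your proposal is correct and follows essentially the same route as the paper: both reduce injectivity to comparing Conway--Sloane conorm diagrams, separate the two combinatorial types by their zero patterns, and in the generic type recover the data $\{P_1,P_2,P_3\}$ (equivalently $x^2,y^2,1$, which the paper obtains by summing labels over pairs of the three lines through the zero point) up to permutation and scaling, with the strict inequalities coming from $a<b<c$ pinning down the order and the scalar. The only minor difference is in the case $x^2+y^2<1$, where the paper first applies an explicit change of basis $P\in\gl_3(\ZZ)$ to replace $G_{oI}(x,y)$ by a Gram matrix whose underlying superbase is obtuse before reading off Figure~\ref{fig:conorms_oI}(a), whereas you work directly with the lattice and Proposition~\ref{prop:bodyconorm}; this is equally valid, since $G_{oI}(x,y)$ is by construction a Gram matrix of a basis of that lattice.
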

\begin{proof}
	It is explained above, how to apply rotations, reflections, and scalings to a body-centered orthorhombic lattice to get an equivalent lattice whose Gram matrix is $G_{oI}(x,y)$ with $x<y<1$ and $(x,y)$ one of the pairs listed in the statement of this proposition. One can conversely construct a lattice for any element of $\mc{S}_{oI}$. This shows that $\mc{S}_{oI}$ is indeed the space of shapes of body-centered orthorhombic lattices.

	Now suppose that $\gl_3(\ZZ)\cdot G_{oI}(x_1,y_1)\cdot\RR^\times=\gl_3(\ZZ)\cdot G_{oI}(x_2,y_2)\cdot\RR^\times$ (where $x_1<y_1<1$ and $x_2<y_2<1$). There are two combinatorial types of conorm diagram in Figure~\ref{fig:conorms_oI}. Accordingly, the equality of these shapes implies that either $x_i^2+y_i^2<1$ for both values of $i$ or $x_i^2+y_i^2>1$ for both values of $i$ (for instance, one type of diagram has more zeroes than the other, so that no automorphism of the Fano plane can bring one to the other).
	
	First consider when $x_i^2+y_i^2>1$. There are exactly three lines with two non-zero labels in the corresponding conorm diagram. Picking two of these at a time and summing all the labels on these two yields $1,x^2,$ and $y^2$, respectively, so that $x$ and $y$ are determined by the conorm diagram.
	
	Now, consider when $x_i^2+y_i^2<1$. The Gram matrices we have picked are not those associated to the conorm diagrams, however the matrix
	\[
		P=\vect{1&0&1\\-1&0&0\\1&1&0}\in\gl_3(\ZZ)
	\]
	acts on $G_{oI}(x,y)$ bringing it to
	\[
		G_{oI}^\prime(x,y):=\vect{x^2&-\dfrac{x^2}{2}&0\\-\dfrac{x^2}{2}&\dfrac{x^2+y^2+1}{4}&-\dfrac{y^2}{2}\\0&-\dfrac{y^2}{2}&y^2}.
	\]
	Note that, since $P\in\gl_3(\ZZ)$, the $G_{oI}(x_i,y_i)$ give the same shape if and only if the $G_{oI}^\prime(x_i,y_i)$ do. This matrix $G_{oI}^\prime(x,y)$ has associated conorm diagram that on the left of Figure~\ref{fig:conorms_oI} (with $a=x,b=y,$ and $c=1$). Such a conorm diagram has exactly two lines all of whose labels are non-zero. On each of these lines, two of the labels are equal and given by $x^2/2$ and $y^2/2$, respectively. An automorphism of the Fano plane must bring one line to itself or to the other, but since $x_i<y_i$ it cannot switch the lines. Once again, the $x_i$ and the $y_i$ can be read off from the conorm diagrams, so that $x_1=x_2$ and $y_1=y_2$.
\end{proof}

Analogues of Remark~\ref{rem:invariantmeasure} and Lemma~\ref{lem:WoCsuffice} hold in the case of body-centered orthorhombic lattices. Accordingly, we are led to the following definitions.
\begin{definition}\mbox{}
	\begin{enumerate}
		\item We define the measure $\mu_{oI}$ on $\mc{S}_{oI}$ by
		\[
			d\mu_{oI}(x,y):=d^\times xd^\times y=\dfrac{dxdy}{xy},
		\]
		where $dxdy$ denotes the usual Lebesgue measure on a subset of $\RR^2$.
		\item For $R_1<R_2\in(0,1)$, let
		\[
			W_{oI}(R_1,R_2):=\left\{\sh_{oI}(x,y):R_1\leq x< y< R_2\right\}.
		\]
	\end{enumerate}
\end{definition}
The same calculation as in Lemma~\ref{lem:WoCsuffice} yields that
\begin{equation}
	\mu_{oI}(W_{oI}(R_1,R_2))=\frac{1}{2}(\log(R_2)-\log(R_1))^2.
\end{equation}


\section{The shapes of $V_4$-quartic fields}\label{sec:V4_shapes}
In this section, we determine the shapes of Galois quartic extensions of $\QQ$ whose Galois group is the Klein $4$-group $V_4$. Such a field $K$ is determined by its 3 quadratic subfields $\QQ(\sqrt{D_i}), i=1,2,3$, where we take $D_i$ squarefree. Note that for $\{i,j,k\}=\{1,2,3\}$ and $g_k=\gcd(D_i,D_j)$,\footnote{If any of the $D$'s are negative then exactly two of them are; in this case, we would choose $g_k<0$ if $D_i$ and $D_j$ are the negative ones.} we have that
\begin{eqnarray}
	&\ds D_k=\frac{D_iD_j}{g_k^2},\\
	&D_k=g_ig_j,\\
	&D_1D_2D_3>0,\\
	&g_1,g_2,g_3\text{ are squarefree and pairwise relatively prime}.
\end{eqnarray}
The work of Kenneth S.\ Williams (\cite{WilliamsBiquad}) breaks the question of integral bases of these fields into 3 cases:
\begin{itemize}
	\item[(i)] $\{D_1,D_2,D_3\}\equiv\{2,2,3\}\mod{4}$;
	\item[(ii)] $\{D_1,D_2,D_3\}\equiv\{1,2,2\}\text{ or }\{1,3,3\}\mod{4}$;
	\item[(iii)] $\{D_1,D_2,D_3\}\equiv\{1,1,1\}\mod{4}$.
\end{itemize}
In cases (i) and (ii), we choose to order the $D_i$ such that $D_1\equiv D_2\mod{4}$ and $|D_1|\leq|D_2|$. For case (iii), take $|D_1|<|D_2|<|D_3|$ and let $\epsilon\in\{\pm1\}$ be such that $\epsilon\equiv g_k\mod{4}$ (this is independent of $k$).
\begin{theorem}[\cite{WilliamsBiquad}]\label{thm:Williams}
If $K$ is a $V_4$-quartic field with quadratic subfields $\QQ(\sqrt{D_i})$ with $D_i$ squarefree, then we have the following cases for the discriminant and integral basis of $K$:
\begin{itemize}
	\item[(i)] $\Delta_K=2^6\cdot(g_1g_2g_3)^2$, basis: $\ds\left(1,\sqrt{D_1},\sqrt{D_3},\frac{\sqrt{D_1}+\sqrt{D_2}}{2}\right)$;
	\item[(ii)] $\Delta_K=2^4\cdot(g_1g_2g_3)^2$, basis: $\ds\left(1,\sqrt{D_1},\frac{1+\sqrt{D_3}}{2},\frac{\sqrt{D_1}+\sqrt{D_2}}{2}\right)$;
	\item[(iii)] $\Delta_K=(g_1g_2g_3)^2$, basis: $\ds\left(1,\frac{1+\sqrt{D_1}}{2},\frac{1+\sqrt{D_2}}{2},\frac{1+\epsilon\sqrt{D_1}+\sqrt{D_2}+\sqrt{D_3}}{4}\right)$.
\end{itemize}
Note that
\[
	 (g_1g_2g_3)^2=D_1D_2D_3.
\]
\end{theorem}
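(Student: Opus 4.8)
The plan is to prove all three formulas at once by showing, in each case, that the proposed tuple is an integral basis of $\mc{O}_K$; the discriminant formula then drops out as the discriminant of that basis. The natural starting point is the ``naive'' $\QQ$-basis $(1,\sqrt{D_1},\sqrt{D_2},\sqrt{D_3})$. Since each nontrivial element of $\Gal(K/\QQ)=V_4$ fixes one $\sqrt{D_i}$ and negates the other two, the trace of any $\sqrt{D_i}$, and of any $\sqrt{D_iD_j}=g_k\sqrt{D_k}$, vanishes; hence the Gram matrix of the trace form in this basis is the diagonal matrix $\mathrm{diag}(4,4D_1,4D_2,4D_3)$, giving $\disc(1,\sqrt{D_1},\sqrt{D_2},\sqrt{D_3})=2^8D_1D_2D_3=2^8(g_1g_2g_3)^2$, using the relation $D_1D_2D_3=(g_1g_2g_3)^2$. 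Each proposed basis arises from this one by an explicit rational change-of-basis matrix $C$ that is triangular after a reordering, so its discriminant is $(\det C)^2\,2^8(g_1g_2g_3)^2$; a direct evaluation gives $(\det C)^2=2^{-2},2^{-4},2^{-8}$ in cases (i), (ii), (iii), yielding exactly the claimed $2^6,2^4,2^0$ times $(g_1g_2g_3)^2$.

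Next I would check that every entry of each proposed basis lies in $\mc{O}_K$. The elements $\sqrt{D_i}$ are always integral, and $\tfrac{1+\sqrt{D_i}}{2}$ is integral precisely when $D_i\equiv1\mod{4}$, which holds wherever it occurs. For $\alpha=\tfrac{\sqrt{D_1}+\sqrt{D_2}}{2}$ I would compute its degree-$4$ characteristic polynomial $x^4-\tfrac{D_1+D_2}{2}x^2+\tfrac{(D_1-D_2)^2}{16}$ and note that its coefficients are integral exactly when $D_1\equiv D_2\mod{4}$; this is why the $D_i$ are ordered so that $D_1\equiv D_2\mod{4}$ in cases (i) and (ii).

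The main obstacle is the integrality of the quarter-integer generator $\theta=\tfrac14\bigl(1+\epsilon\sqrt{D_1}+\sqrt{D_2}+\sqrt{D_3}\bigr)$ in case (iii). Its four Galois conjugates are $\tfrac14(1\pm\epsilon\sqrt{D_1}\pm\sqrt{D_2}\pm\sqrt{D_3})$ with an even number of minus signs (each $\sigma_i$ flips exactly two roots), so $\tr(\theta)=1$ is immediate; but verifying that the remaining coefficients of its characteristic polynomial are integers requires the full strength of the hypotheses $D_1\equiv D_2\equiv D_3\equiv1\mod{4}$ together with $\epsilon\equiv g_k\mod{4}$ (the latter being well defined since those congruences force $g_1\equiv g_2\equiv g_3\mod{4}$). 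I expect this symmetric-function computation --- showing that $16$ divides the relevant cross-terms --- to be the crux of the whole argument.

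Finally, to upgrade ``integral with the computed discriminant'' to ``integral basis'', I would determine $\Delta_K$ independently via the conductor--discriminant formula. The extension $K/\QQ$ is abelian with character group $\{1,\chi_1,\chi_2,\chi_3\}$, where $\chi_i$ is the quadratic character cutting out $\QQ(\sqrt{D_i})$, of conductor equal to the absolute discriminant of $\QQ(\sqrt{D_i})$, namely $|D_i|$ or $4|D_i|$ according as $D_i\equiv1$ or $D_i\not\equiv1\mod{4}$. Thus $|\Delta_K|=\prod_i\mf{f}(\chi_i)=2^{2e}(g_1g_2g_3)^2$, where $e$ is the number of $D_i\not\equiv1\mod{4}$ (so $e=3,2,0$ in cases (i), (ii), (iii)), matching the value computed from the proposed basis. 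Since that basis spans a lattice $\Lambda\subseteq\mc{O}_K$ with $\disc(\Lambda)=[\mc{O}_K:\Lambda]^2\Delta_K$ and $\disc(\Lambda)=\Delta_K$, the index is forced to equal $1$; hence $\Lambda=\mc{O}_K$, the tuple is an integral basis, and $\Delta_K$ is as claimed, settling all three cases simultaneously.
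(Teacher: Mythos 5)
First, a framing note: the paper does not actually prove this statement---it is imported wholesale from Williams \cite{WilliamsBiquad}---so your attempt can only be judged on its own terms. Most of it holds up: the trace form on $(1,\sqrt{D_1},\sqrt{D_2},\sqrt{D_3})$ is indeed $\mathrm{diag}(4,4D_1,4D_2,4D_3)$ because $\sqrt{D_i}\sqrt{D_j}$ is a rational multiple of $\sqrt{D_k}$ and hence traceless; the change-of-basis determinants $2^{-1},2^{-2},2^{-4}$ account for the three powers of $2$; the integrality checks in cases (i) and (ii) are fine; the conductor--discriminant computation $|\Delta_K|=\prod_i|\Delta_i|=2^{2e}(g_1g_2g_3)^2$ is correct, and the sign issue is harmless since a $V_4$-quartic field is totally real or totally imaginary, so $\Delta_K>0$; and the index argument $\disc(\Lambda)=[\mc{O}_K:\Lambda]^2\Delta_K$ then correctly forces $\Lambda=\mc{O}_K$.

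The genuine gap is the one you flagged and then skipped: you never prove that $\theta=\tfrac{1}{4}\bigl(1+\epsilon\sqrt{D_1}+\sqrt{D_2}+\sqrt{D_3}\bigr)$ is an algebraic integer in case (iii). This is not a routine detail; it is the entire content of the theorem in the odd case (it is exactly where the hypothesis $\epsilon\equiv g_k\mod{4}$ enters), and without it your concluding index argument cannot even start, since it presupposes $\Lambda\subseteq\mc{O}_K$. So as written the proof is incomplete. The missing step can, however, be supplied more cheaply than by expanding the characteristic polynomial. With the paper's sign conventions one has $\sqrt{D_i}\sqrt{D_j}=g_k\sqrt{D_k}$ (this is precisely what the convention $g_k<0$ when $D_i,D_j<0$ is for), hence
\[
	\frac{1+\epsilon\sqrt{D_1}}{2}\cdot\frac{1+\sqrt{D_2}}{2}
	=\frac{1}{4}\left(1+\epsilon\sqrt{D_1}+\sqrt{D_2}+\epsilon g_3\sqrt{D_3}\right),
\]
which is a product of algebraic integers because $D_1\equiv D_2\equiv1\mod{4}$. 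Therefore
\[
	\theta-\frac{1+\epsilon\sqrt{D_1}}{2}\cdot\frac{1+\sqrt{D_2}}{2}
	=\frac{1-\epsilon g_3}{4}\,\sqrt{D_3},
\]
and since $\epsilon\equiv g_3\mod{4}$ and $\epsilon^2=1$, we get $\epsilon g_3\equiv1\mod{4}$, so this difference lies in $\ZZ[\sqrt{D_3}]$. Hence $\theta\in\mc{O}_K$, and with this one computation supplied, the rest of your argument closes all three cases.
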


Let $\Delta_i$ be the discriminant of $\QQ(\sqrt{D_i})$. In this section, we will prove the following complete characterization of the shapes of $V_4$-quartic fields.
\begin{theorem}\label{thm:V4_main_theorem}
	The shapes of $V_4$-quartic fields come in two families depending on whether or not $2$ is ramified in $K$ (i.e.\ depending on whether or not $K$ is wild).
	\begin{enumerate}
		\item If $2$ ramifies in $K$, then the combinatorial type of the shape of $K$ is a hexagonal prism (IV) or a cuboid (V). Specifically, the shape is a base-centered orthorhombic lattice ($oC$); in the special case where $D_2=3D_1$ this is a primitive hexagonal lattice ($hP$) and when $D_2=-D_1$ this is a primitive tetragonal lattice (which is the cuboid case). The side ratios of the rectangular prism are $a:b:c=\sqrt{|\Delta_1|}:\sqrt{|\Delta_2|}:\sqrt{|\Delta_3|}$. The shape is primitive hexagonal if and only if all quadratic subfields of $K$ are ramified at $2$ and one of the fields is $\QQ(\sqrt{3})$. The shape is primitive tetragonal if and only if $\QQ(i)$ is a subfield of $K$.
		\item If $2$ is unramified in $K$, then the combinatorial type of the shape of $K$ depends on whether $|D_1|+|D_2|<|D_3|$ or $|D_1|+|D_2|>|D_3|$ (equality cannot occur). In the former case, it is a truncated octahedron (I), while in the latter case it is a rhombo-hexagonal dodecahedron (II). In both cases, the shape is a body-centered orthorhombic lattice ($oI$) with side ratios $a:b:c=\sqrt{|\Delta_1|}:\sqrt{|\Delta_2|}:\sqrt{|\Delta_3|}$, with $a^2+b^2<c^2$ and $a^2+b^2>c^2$, respectively.
	\end{enumerate}
\end{theorem}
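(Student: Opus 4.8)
The plan is to compute the shape directly from the explicit integral bases in Theorem~\ref{thm:Williams}, using the ``perp map'' formalism from \S\ref{sec:shape}. For each of the three Williams cases, I would take the integral basis $(1,\gamma_1,\gamma_2,\gamma_3)$, apply $\gamma\mapsto\gamma^\perp=4\gamma-\tr(\gamma)$ to the three non-unit basis elements, and then compute the Gram matrix of the resulting lattice $\mc{O}_K^\perp$ using the inner product coming from the Minkowski embedding. The key arithmetic fact that makes this tractable is that the trace form on a $V_4$-field is easy to evaluate: for $\alpha=\sqrt{D_i}$ one has $\tr(\sqrt{D_i})=0$ and $\tr(\sqrt{D_i}\sqrt{D_j})=\tr(g_k\sqrt{D_k})=0$ for $i\neq j$, while $\tr(D_i)=4D_i$. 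Thus the inner products $\langle j(\alpha),j(\beta)\rangle=\tr(\alpha\beta)$ are essentially diagonalized by the $\sqrt{D_i}$, and the off-diagonal terms of the Gram matrix will come only from the cross terms introduced by the $\frac12$ and $\frac14$ denominators in the bases of cases (ii) and (iii).

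The second step is to \emph{identify} each resulting Gram matrix with one of the standard lattice families from \S\ref{sec:lattices}. Rather than manipulating Gram matrices up to $\gl_3(\ZZ)$ directly, I would exhibit an explicit orthogonal basis of $K_\RR$ (coming from the three $j(\sqrt{D_i})$, which are mutually orthogonal after projecting off $j(1)$) and read off the lattice geometrically: the vectors $\sqrt{D_i}^\perp$ have squared lengths proportional to $|D_i|$, and one checks whether the ``half-integer'' basis vector sits at the centre of a base (case (i), (ii)) or at the body-centre of the prism (case (iii)). This is where the scaling $a:b:c=\sqrt{|\Delta_1|}:\sqrt{|\Delta_2|}:\sqrt{|\Delta_3|}$ emerges, since $\Delta_i=D_i$ or $4D_i$ according to the residue of $D_i$ mod $4$, and the factors of $4$ are exactly tracked by the Williams cases. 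Matching to the conorm diagrams of Propositions~\ref{prop:baseconorm} and~\ref{prop:bodyconorm} then pins down the combinatorial type via the sign of $a^2+b^2-c^2$ (equivalently $|\Delta_1|+|\Delta_2|-|\Delta_3|$), and one must separately verify that equality never occurs.

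The main obstacle, I expect, is \emph{bookkeeping the normalizations consistently} across the three cases so that the single clean statement $a:b:c=\sqrt{|\Delta_1|}:\sqrt{|\Delta_2|}:\sqrt{|\Delta_3|}$ falls out uniformly. Concretely: in case (i) all $\Delta_i$ carry a factor reflecting ramification at $2$, in case (ii) one subfield is unramified at $2$, and in case (iii) none are, so the relation between $D_i$ and $\Delta_i$ shifts case by case; I need to check that the half-integer basis vector always lands in a base-centered position precisely when $2$ ramifies and in a body-centered position precisely when $2$ is unramified, which is the geometric content of the Tame--Wild dichotomy. The special degenerations are then easy sanity checks: $b=\sqrt{3}a$ (hexagonal) forces $|\Delta_2|=3|\Delta_1|$, which in the wild case with $\QQ(\sqrt3)\subseteq K$ gives $D_2=3D_1$; and $a=b$ (tetragonal/cuboid) forces $|\Delta_1|=|\Delta_2|$, i.e.\ $D_2=-D_1$, which occurs exactly when $\QQ(i)\subseteq K$. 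Finally, to rule out the borderline $|\Delta_1|+|\Delta_2|=|\Delta_3|$ in the tame case, I would argue via a congruence or squarefreeness obstruction on the $D_i$, since $|D_3|=|D_1D_2|/g_3^2$ makes the equation $|D_1|+|D_2|=|D_1D_2|/g_3^2$ have no solutions under the standing hypotheses.
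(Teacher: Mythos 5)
Your computational core coincides with the paper's own proof: it too starts from Williams' bases (Theorem~\ref{thm:Williams}), applies the perp map, and rests on the orthogonality relations $\langle j(\sqrt{D_i}),j(\sqrt{D_k})\rangle=4|D_i|\delta_{ik}$ (Lemma~\ref{lem:usefulcomputations}; note that for imaginary subfields this pairing is $4|D_i|$, not $\tr(D_i)=4D_i$, since the Minkowski inner product involves complex conjugation --- your identity $\langle j(\alpha),j(\beta)\rangle=\tr(\alpha\beta)$ is literally true only in the totally real case, though orthogonality and lengths survive, so nothing breaks). Where you genuinely differ is the identification step. The paper manufactures, in each case, an explicit \emph{obtuse superbase} of $\mc{O}_K^\perp$ and matches its conorm diagram against Propositions~\ref{prop:baseconorm} and~\ref{prop:bodyconorm}; in the tame case this forces it to use two different integral bases according to whether $|D_1|+|D_2|$ is smaller or larger than $|D_3|$, because obtuseness fails for one superbase in each regime. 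Your geometric recognition --- the perp lattice contains the orthogonal frame $j(\sqrt{D_i})^\perp$, and the remaining basis vector is a base centre (cases (i), (ii)) or a body centre (case (iii)) of the rectangular prism that frame spans --- sidesteps both the obtuseness verifications and that case split, at the cost of checking (easy, but it must be said) that the group generated by the prism lattice and the centre is exactly the $oC$, resp.\ $oI$, lattice. The excluded equality $|D_1|+|D_2|=|D_3|$ is immediate by parity, since all $D_i$ are odd in case (iii).

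The genuine gap is your treatment of the degenerations, which you compress into one-directional ``sanity checks''; done carefully, the sign and congruence analysis shows that the two ``if and only if'' claims in the statement cannot in fact be proved, because they are false in case (ii). Hexagonality of an $oC$ lattice means $b=\sqrt{3}\,a$, i.e.\ $|D_2|=3|D_1|$, i.e.\ $D_2=\pm3D_1$, and the sign matters: in case (i) the option $D_2=-3D_1$ is killed by $D_3=-3\equiv1\mod{4}$, but in case (ii) it is precisely $D_2=3D_1$ (giving $D_3=3$) that is excluded, while $D_2=-3D_1$ gives $D_3=-3\equiv1\mod{4}$, which is allowed. Concretely, for $(D_1,D_2,D_3)=(2,-6,-3)$, i.e.\ $K=\QQ(\sqrt{2},\sqrt{-3})$, the perp vectors $4\sqrt{2}$ and $2\sqrt{2}+2\sqrt{-6}$ have equal squared length $128$ and inner product $64$, hence meet at $60^\circ$, and $2\sqrt{-3}$ is orthogonal to both: the shape \emph{is} primitive hexagonal, even though $\QQ(\sqrt{-3})$ is unramified at $2$ and $\QQ(\sqrt{3})\not\subset K$. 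Likewise $K=\QQ(i,\sqrt{3})=\QQ(\zeta_{12})$, with $(D_1,D_2,D_3)=(-1,3,-3)$, contains $\QQ(i)$ but has hexagonal, not tetragonal, shape, so the ``if'' half of the tetragonal claim also fails. You are in good company: the paper's Proposition~\ref{prop:V4_case_ii} has the same blind spot (it tests only $D_2=3D_1$), and the Remark following the theorem even lists the field $(2,-6,-3)$ without observing that its shape is hexagonal. A correct version of the dichotomy is: in the wild case the shape is primitive hexagonal if and only if $D_3=\pm3$, equivalently $\QQ(\sqrt{3})\subseteq K$ (case (i)) or $\QQ(\sqrt{-3})\subseteq K$ (case (ii)), and primitive tetragonal if and only if $D_3=-1$, i.e.\ $\QQ(i)\subseteq K$ \emph{and} the other two quadratic subfields are ramified at $2$. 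Your write-up should prove these statements instead, and flag the discrepancy with the theorem as printed.
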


\begin{remark}\mbox{}
	\begin{enumerate}
	\item Note that although there are two different combinatorial types when $2$ is unramified, one can deform continuously from one to the other (via the face-centered cubic lattice) as can be seen from the conorm diagrams of Figure~\ref{fig:conorms_oI} (indeed, setting $c^2=a^2+b^2$ in each of the diagrams yields diagrams that are off by an automorphism of the Fano plane).
	\item Similarly, the cuboid combinatorial type when $2$ is ramified is simply a special case of the family of base-centered orthorhombic lattices.
	\item We remark that when the shape is a primitive hexagonal lattice, there can be other $V_4$-quartic fields of the same discriminant that are base-centered orthorhombic lattices. For example, $(D_1,D_2,D_3)=(10,30,3)$ gives a primitive hexagonal lattice, but $(D_1,D_2,D_3)=(2,30,15)$ gives a base-centered orthorhombic.
	\item The shape does not always determine the field. For instance, the two fields with $(D_1,D_2,D_3)$ given by $(-2,-6,3)$ and $(2,6,3)$ have the same shape, as do $(-2,6,-3)$ and $(2,-6,-3)$. We do however have the uniqueness given in Corollary~\ref{cor:V4_shape_complete_invariant} at the end of this section.
	\end{enumerate}
\end{remark}

\subsection{Preliminary calculations}
We collect a few straightforward results used in the following sections. The computations are eased by the fact that $\sqrt{D_i}$ and $\sqrt{D_j}$ ($i\neq j$) are orthogonal, as well as being orthogonal to $1$.

For concreteness (though it doesn't really matter), if $D_i>0$, we let $\sqrt{D_i}$ denote the positive square root of $D_i$, and if $D_i<0$, $\sqrt{D_i}$ will denote its square root whose imaginary part is positive. We fix a choice of orderings of the embeddings of $K$ into $\CC$ such that
\begin{align*}
	j(\sqrt{D_1})&=\left(\sqrt{D_1},-\sqrt{D_1},\sqrt{D_1},-\sqrt{D_1}\right),\\
	j(\sqrt{D_2})&=\left(\sqrt{D_2},\sqrt{D_2},-\sqrt{D_2},-\sqrt{D_2}\right),\\
	j(\sqrt{D_3})&=\left(\sqrt{D_3},-\sqrt{D_3},-\sqrt{D_3},\sqrt{D_3}\right).
\end{align*}
\begin{lemma}\label{lem:usefulcomputations}
	For $1\leq i,k\leq3$,
	\begin{equation}
		\left\langle j(\sqrt{D_i}),j(\sqrt{D_k})\right\rangle=4|D_i|\delta_{ik}.
	\end{equation}
	Furthermore,
	\begin{equation}
		\left\langle j(1),j(\sqrt{D_i})\right\rangle=0.
	\end{equation}
\end{lemma}

\subsection{$K$ ramified at $2$}
As can be seen above, $K$ is ramified at $2$ exactly in cases (i) and (ii).

\subsubsection{Case (i): $(D_1,D_2,D_3)\equiv(2,2,3)\mod{4}$}
\begin{lemma}
	The tuple $\left(\gamma^{(i)}_0,\gamma^{(i)}_1,\gamma^{(i)}_2,\gamma^{(i)}_3\right)$ given by
	\begin{align*}
		\gamma_0^{(i)}&=1-\sqrt{D_1}-\sqrt{D_3}\\
		\gamma_1^{(i)}&=\frac{\sqrt{D}_1-\sqrt{D_2}}{2}\\
		\gamma_2^{(i)}&=\sqrt{D_3}\\
		\gamma_3^{(i)}&=\frac{\sqrt{D}_1+\sqrt{D_2}}{2}.
	\end{align*}
	is an integral basis of $K$.
\end{lemma}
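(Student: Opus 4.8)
The statement to prove is that the tuple
$\left(\gamma_0^{(i)},\gamma_1^{(i)},\gamma_2^{(i)},\gamma_3^{(i)}\right)$ is an integral basis of $K$ in case (i), where $(D_1,D_2,D_3)\equiv(2,2,3)\mod 4$. Since we already have from Theorem~\ref{thm:Williams}(i) that
$\left(1,\sqrt{D_1},\sqrt{D_3},\frac{\sqrt{D_1}+\sqrt{D_2}}{2}\right)$
is an integral basis of $K$, the cleanest plan is to exhibit an explicit change of basis between these two tuples, realized by a matrix in $\gl_4(\ZZ)$. If the transition matrix has integer entries and determinant $\pm1$, then the new tuple spans the same $\ZZ$-lattice $\mc{O}_K$ and is therefore also an integral basis.

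\textbf{The change-of-basis computation.} Writing $b_0=1$, $b_1=\sqrt{D_1}$, $b_2=\sqrt{D_3}$, $b_3=\frac{\sqrt{D_1}+\sqrt{D_2}}{2}$ for Williams's basis, I would express each $\gamma_j^{(i)}$ in terms of the $b_k$. Directly:
\begin{align*}
	\gamma_0^{(i)}&=b_0-b_1-b_2,\\
	\gamma_2^{(i)}&=b_2,\\
	\gamma_3^{(i)}&=b_3.
\end{align*}
For $\gamma_1^{(i)}=\frac{\sqrt{D_1}-\sqrt{D_2}}{2}$, note that $b_3=\frac{\sqrt{D_1}+\sqrt{D_2}}{2}$ gives $\frac{\sqrt{D_1}-\sqrt{D_2}}{2}=\sqrt{D_1}-b_3=b_1-b_3$, so $\gamma_1^{(i)}=b_1-b_3$. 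Thus the transition matrix expressing $(\gamma_0^{(i)},\gamma_1^{(i)},\gamma_2^{(i)},\gamma_3^{(i)})$ in terms of $(b_0,b_1,b_2,b_3)$ is
\[
	M=\vect{1&-1&-1&0\\0&1&0&-1\\0&0&1&0\\0&0&0&1}.
\]
This matrix is integral and upper triangular with $1$'s on the diagonal, so $\det M=1$ and $M\in\gl_4(\ZZ)$. Therefore the $\gamma_j^{(i)}$ form a $\ZZ$-basis of the same lattice spanned by the $b_k$, namely $\mc{O}_K$.

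\textbf{The main obstacle and verification.} The only real content is checking that my expression of $\gamma_1^{(i)}$ (the one involving denominators) lands correctly in terms of Williams's half-integral basis element $b_3$; the other three rows are immediate. I would double-check this by verifying that $b_1-b_3=\sqrt{D_1}-\frac{\sqrt{D_1}+\sqrt{D_2}}{2}=\frac{\sqrt{D_1}-\sqrt{D_2}}{2}$, which matches $\gamma_1^{(i)}$ exactly. Since no denominators survive in $M$, there is no subtlety about integrality beyond this. I expect the proof in the paper to do essentially this—present the integer transition matrix (or its inverse) and observe it has determinant $\pm1$—with perhaps a sentence invoking Theorem~\ref{thm:Williams}(i) to anchor the starting basis. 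The purpose of this particular repackaging is almost certainly to arrange the basis into an obtuse superbase (note $\gamma_0^{(i)}+\gamma_1^{(i)}+\gamma_2^{(i)}+\gamma_3^{(i)}$ and the pairwise inner products, computed via Lemma~\ref{lem:usefulcomputations}), setting up the conorm-diagram identification of the shape in the subsequent lemmas.
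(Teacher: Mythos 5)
Your proof is correct and matches the paper's approach: the paper likewise verifies the lemma by exhibiting an invertible integral change of basis to Williams's basis from Theorem~\ref{thm:Williams}(i) (it writes $1=\sum_k\gamma_k^{(i)}$ and $\sqrt{D_1}=\gamma_1^{(i)}+\gamma_3^{(i)}$, which is just the inverse direction of your unimodular matrix $M$). Your version is slightly more explicit, but it is the same argument.
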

\begin{proof}
	Note that
	\[
		1=\sum_{k=0}^3\gamma_k^{(i)}\quad\text{and}\quad\sqrt{D_1}=\gamma_1^{(i)}+\gamma_3^{(i)}.
	\]
	From this, one may see that the change of basis from the $\gamma_k^{(i)}$ to that of Williams is invertible.
\end{proof}
\begin{proposition}\label{prop:V4_case_i}
	The numbers
	\begin{align*}
	\gamma_{0,\perp}^{(i)}&=-4\left(\sqrt{D_1}+\sqrt{D_3}\right)\\
	\gamma_{1,\perp}^{(i)}&=2\left(\sqrt{D_1}-\sqrt{D_2}\right)\\
	\gamma_{2,\perp}^{(i)}&=4\sqrt{D_3}\\
	\gamma_{3,\perp}^{(i)}&=2\left(\sqrt{D_1}+\sqrt{D_2}\right)
\end{align*}
	form an obtuse superbase of $\mc{O}_K^\perp$. Its Gram matrix (scaled by $2^{-4}$) is
	\begin{equation}
		\vect{4|D_1|+4|D_3|	& -2|D_1|		& -4|D_3|		& -2|D_1|\\
			-2|D_1|			& |D_1|+|D_2|	& 0		& |D_1|-|D_2|\\
			-4|D_3|			& 0		& 4|D_3|	& 0\\
			-2|D_1|		& |D_1|-|D_2|	& 0		& |D_1|+|D_2|}
	\end{equation}
	yielding a conorm diagram as in Figure~\ref{fig:conorms_oC} with $a=2\sqrt{|D_1|}$, $b=2\sqrt{|D_2|}$, and $c=2\sqrt{|D_3|}$. In particular, the shape is a primitive hexagonal lattice if and only if $D_2=3D_1$.
\end{proposition}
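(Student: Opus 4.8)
The plan is to exhibit the four elements $\gamma_{k,\perp}^{(i)}$ as an \emph{obtuse} superbase of $\mc{O}_K^\perp$ and then to read its conorm diagram off the Gram matrix, matching the result against Figure~\ref{fig:conorms_oC}.

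First I would recover the displayed formulas for the $\gamma_{k,\perp}^{(i)}$ by applying the perp map $\alpha^\perp=4\alpha-\tr(\alpha)$ of \S\ref{sec:shape} to the integral basis of the preceding lemma, using $\tr(1)=4$ and $\tr(\sqrt{D_m})=0$. I would then verify the three superbase axioms. The relation $\sum_k\gamma_{k,\perp}^{(i)}=0$ is immediate from $\sum_k\gamma_k^{(i)}=1$ and $1^\perp=0$. That $(\gamma_{1,\perp}^{(i)},\gamma_{2,\perp}^{(i)},\gamma_{3,\perp}^{(i)})$ is a $\ZZ$-basis of $\mc{O}_K^\perp$ follows from the general fact recorded in \S\ref{sec:shape}: since $\gamma_0^{(i)}=1-\gamma_1^{(i)}-\gamma_2^{(i)}-\gamma_3^{(i)}$, the tuple $(1,\gamma_1^{(i)},\gamma_2^{(i)},\gamma_3^{(i)})$ is again an integral basis, and the perp images of its non-unit members form a basis of $\mc{O}_K^\perp$.

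The computational core is the Gram matrix. By Lemma~\ref{lem:usefulcomputations} the vectors $j(\sqrt{D_m})$ are mutually orthogonal with $\langle j(\sqrt{D_m}),j(\sqrt{D_m})\rangle=4|D_m|$, so each inner product $\langle j(\gamma_{k,\perp}^{(i)}),j(\gamma_{l,\perp}^{(i)})\rangle$ is a one-line bilinear expansion; collecting them and extracting the common factor $2^4$ gives the stated matrix. The obtuse condition then reduces to checking that every off-diagonal entry is $\le0$. All of these are visibly $0$ or negative except for $\langle j(\gamma_{1,\perp}^{(i)}),j(\gamma_{3,\perp}^{(i)})\rangle=2^4(|D_1|-|D_2|)$, which is $\le0$ precisely because of the ordering convention $|D_1|\le|D_2|$ adopted in case (i). This is the one genuine use of a hypothesis, and I expect it to be the main subtlety: it is exactly what guarantees the superbase is already obtuse, so that---unlike the body-centered tetragonal case---no step of Voronoi reduction is required.

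It remains to identify the lattice. Reading the conorms $p_{kl}=-\langle j(\gamma_{k,\perp}^{(i)}),j(\gamma_{l,\perp}^{(i)})\rangle$ off the scaled matrix yields $p_{01}=p_{03}=2|D_1|$, $p_{02}=4|D_3|$, $p_{13}=|D_2|-|D_1|$, and $p_{12}=p_{23}=0$; setting $a=2\sqrt{|D_1|}$, $b=2\sqrt{|D_2|}$, $c=2\sqrt{|D_3|}$ turns these into exactly the labels of Figure~\ref{fig:conorms_oC}, so by Proposition~\ref{prop:baseconorm} the shape is the asserted base-centered orthorhombic lattice. (One could phrase this as an honest isometry: the superbase relation gives $v_i\cdot v_i=\sum_{j\neq i}p_{ij}$, so the six conorms already determine the $3\times3$ Gram matrix in the basis $(\gamma_{1,\perp}^{(i)},\gamma_{2,\perp}^{(i)},\gamma_{3,\perp}^{(i)})$.) For the final clause, Proposition~\ref{prop:baseconorm} gives a primitive hexagonal lattice exactly when $b=\sqrt{3}\,a$, i.e.\ $|D_2|=3|D_1|$; I would finish by noting that within case (i) this forces $D_2=3D_1$, since $D_2=-3D_1$ would give $D_3=D_1D_2/g_3^2<0$ with $D_3\equiv1\mod{4}$, contradicting $D_3\equiv3\mod{4}$.
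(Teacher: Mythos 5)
Your proposal is correct and follows the same route as the paper's (much terser) proof: apply the perp map to the integral basis of the preceding lemma, note the superbase relation, compute the Gram matrix via Lemma~\ref{lem:usefulcomputations}, and match the conorms against Figure~\ref{fig:conorms_oC}. The only differences are that you make explicit two points the paper leaves implicit --- that obtuseness of the entry $|D_1|-|D_2|$ rests on the case~(i) ordering convention $|D_1|\leq|D_2|$, and that the sign in $D_2=3D_1$ (rather than $D_2=-3D_1$) is forced by $D_3\equiv3\mod{4}$ --- both of which are correct and worth spelling out.
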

\begin{proof}
	That the trace of $\sqrt{D_i}$ is $0$ yields the formulas for the $\gamma_{k,\perp}^{(i)}$. The $\gamma_{k,\perp}^{(i)}$ manifestly form a superbase. One obtains the claimed conorm diagram by simply computing the Gram matrix (using Lemma~\ref{lem:usefulcomputations}).
\end{proof}
From the formula for the discriminant, we see that if the shape is hexagonal, then the discriminant must be divisible by $2^83^2$ in this case. Also, if $D_2=3D_1$, then $D_3=3$. We will need to show that the shape cannot be hexagonal in case (ii).

We also see that $a=b$ if and only if $|D_1|=|D_2|$. Since $D_1\neq D_2$ (or else $D_3=1$), this forces $D_2=-D_1$, in which case $D_3=-1$. Thus, in case (i), a primitive tetragonal lattice occurs only if $\QQ(i)\subseteq K$. We will see that primitive tetragonal lattices cannot occur in case (ii).

\subsubsection{Case (ii): $(D_1,D_2,D_3)\equiv(2,2,1)\text{ or }(3,3,1)\mod{4}$}
\begin{lemma}
	The tuple $\left(\gamma^{(ii)}_0,\gamma^{(ii)}_1,\gamma^{(ii)}_2,\gamma^{(ii)}_3\right)$ given by
	\begin{align*}
		\gamma_0^{(ii)}&=-\sqrt{D_1}+\frac{1-\sqrt{D_3}}{2}\\
		\gamma_1^{(ii)}&=\frac{\sqrt{D}_1-\sqrt{D_2}}{2}\\
		\gamma_2^{(ii)}&=\frac{1+\sqrt{D_3}}{2}\\
		\gamma_3^{(ii)}&=\frac{\sqrt{D}_1+\sqrt{D_2}}{2}.
	\end{align*}
	is an integral basis of $K$.
\end{lemma}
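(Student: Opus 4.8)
The plan is to mirror the proof given for case (i): I will exhibit a change-of-basis matrix in $\gl_4(\ZZ)$ relating the tuple $(\gamma_0^{(ii)},\gamma_1^{(ii)},\gamma_2^{(ii)},\gamma_3^{(ii)})$ to the integral basis of Williams recorded in Theorem~\ref{thm:Williams}(ii), namely $\left(1,\sqrt{D_1},\frac{1+\sqrt{D_3}}{2},\frac{\sqrt{D_1}+\sqrt{D_2}}{2}\right)$. Since that tuple is known to be an integral basis of $\mc{O}_K$, unimodularity of the change of basis immediately upgrades our tuple to an integral basis as well.

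First I would record the key linear identities expressing Williams' basis vectors in terms of the $\gamma_k^{(ii)}$. A direct computation gives $\sum_{k=0}^{3}\gamma_k^{(ii)}=1$ (the $\sqrt{D_1}$, $\sqrt{D_2}$, and $\sqrt{D_3}$ contributions all cancel and the two halves $\frac12$ sum to $1$), together with the three further identities $\gamma_1^{(ii)}+\gamma_3^{(ii)}=\sqrt{D_1}$, $\gamma_2^{(ii)}=\frac{1+\sqrt{D_3}}{2}$, and $\gamma_3^{(ii)}=\frac{\sqrt{D_1}+\sqrt{D_2}}{2}$. These four identities express, respectively, the vectors $1$, $\sqrt{D_1}$, $\frac{1+\sqrt{D_3}}{2}$, and $\frac{\sqrt{D_1}+\sqrt{D_2}}{2}$ of Williams' basis as explicit integer-linear combinations of the $\gamma_k^{(ii)}$.

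Next I would assemble these identities into the change-of-basis matrix. With Williams' basis ordered as above and the new tuple ordered as $(\gamma_0^{(ii)},\gamma_1^{(ii)},\gamma_2^{(ii)},\gamma_3^{(ii)})$, the resulting matrix is upper triangular with $1$'s along the diagonal; hence its determinant is $1$ and it lies in $\gl_4(\ZZ)$. This gives the claim.

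There is no genuine obstacle here: the argument is a short linear-algebra verification, exactly parallel to case (i). The only point requiring care is the bookkeeping in confirming that every irrational contribution really does cancel in $\sum_k\gamma_k^{(ii)}$ and that the assembled matrix is unimodular rather than merely integral. I would remark that the identity $\sum_k\gamma_k^{(ii)}=1$ does double duty: since $1\in\QQ$ has trace $4$, applying the perp map $\alpha^{\perp}=4\alpha-\tr(\alpha)$ term by term forces $\sum_k\gamma_{k,\perp}^{(ii)}=4\cdot1-\tr(1)=0$, which is precisely the superbase relation needed for the $\gamma_{k,\perp}^{(ii)}$ in the proposition to follow. Verifying that one sum therefore both establishes this lemma and sets up the next computation.
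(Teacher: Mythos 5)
Your proof is correct and follows essentially the same route as the paper's: the paper likewise notes the identities $1=\sum_k\gamma_k^{(ii)}$ and $\sqrt{D_1}=\gamma_1^{(ii)}+\gamma_3^{(ii)}$ (with $\gamma_2^{(ii)}$ and $\gamma_3^{(ii)}$ literally equal to two of Williams' basis vectors) and concludes that the change of basis is invertible over $\ZZ$. Your added details --- the explicit upper-triangular unimodular matrix, and the observation that $\sum_k\gamma_k^{(ii)}=1$ also yields the superbase relation $\sum_k\gamma_{k,\perp}^{(ii)}=0$ needed in the subsequent proposition --- are accurate refinements of that same argument.
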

\begin{proof}
	As above, note that
	\[
		1=\sum_{k=0}^3\gamma_k^{(i)}\quad\text{and}\quad\sqrt{D_1}=\gamma_1^{(ii)}+\gamma_3^{(ii)},
	\]
	so that once again the change of basis from the $\gamma_k^{(ii)}$ to that of Williams is invertible.
\end{proof}
\begin{proposition}\label{prop:V4_case_ii}
	The elements
	\begin{align*}
	\gamma_{0,\perp}^{(ii)}&=-2\left(2\sqrt{D_1}+\sqrt{D_3}\right)\\
	\gamma_{1,\perp}^{(ii)}&=2\left(\sqrt{D}_1-\sqrt{D_2}\right)\\
	\gamma_{2,\perp}^{(ii)}&=2\sqrt{D_3}\\
	\gamma_{3,\perp}^{(ii)}&=2\left(\sqrt{D}_1+\sqrt{D_2}\right)
	\end{align*}
	form an obtuse superbase of $\mc{O}_K^\perp$. Its Gram matrix (scaled by $2^{-4}$) is
	\begin{equation}
		\vect{4|D_1|+|D_3|	& -2|D_1|	& -|D_3|		& -2|D_1|\\
			-2|D_1|			& |D_1|+|D_2|	& 0		& |D_1|-|D_2|\\
			-|D_3|			& 0		& |D_3|	& 0\\
			-2|D_1|		& |D_1|-|D_2|	& 0		& |D_1|+|D_2|}
	\end{equation}
	yielding a conorm diagram as in Figure~\ref{fig:conorms_oC} with $a=2\sqrt{|D_1|}$, $b=2\sqrt{|D_2|}$, and $c=\sqrt{|D_3|}$. In particular, the shape, in this case, is never a primitive hexagonal or tetragonal lattice.
\end{proposition}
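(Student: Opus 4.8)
The plan is to mirror the proof of Proposition~\ref{prop:V4_case_i}. Since $\tr(1)=4$ and $\tr(\sqrt{D_i})=0$, the perp map $\alpha^\perp=4\alpha-\tr(\alpha)$ is determined by the traces $\tr(\gamma_0^{(ii)})=\tr(\gamma_2^{(ii)})=2$ and $\tr(\gamma_1^{(ii)})=\tr(\gamma_3^{(ii)})=0$, which immediately yield the displayed $\gamma_{k,\perp}^{(ii)}$. These form a superbase: applying $\perp$ to the relation $\sum_k\gamma_k^{(ii)}=1$ from the preceding lemma gives $\sum_k\gamma_{k,\perp}^{(ii)}=1^\perp=0$, and since $(1,\gamma_1^{(ii)},\gamma_2^{(ii)},\gamma_3^{(ii)})$ is also a $\ZZ$-basis of $\mc{O}_K$ (because $\gamma_0^{(ii)}=1-\gamma_1^{(ii)}-\gamma_2^{(ii)}-\gamma_3^{(ii)}$), the images $\gamma_{1,\perp}^{(ii)},\gamma_{2,\perp}^{(ii)},\gamma_{3,\perp}^{(ii)}$ are a $\ZZ$-basis of $\mc{O}_K^\perp$ (the perp map kills exactly $\ZZ\cdot1$). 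I would then obtain the Gram matrix by a direct bilinear expansion using Lemma~\ref{lem:usefulcomputations}: every cross term $\langle j(\sqrt{D_i}),j(\sqrt{D_j})\rangle$ with $i\neq j$ vanishes, so each entry collapses to a combination of the $4|D_i|$, reproducing the displayed matrix after scaling by $2^{-4}$. Reading off the putative conorms $p_{ij}=-v_i\cdot v_j$ then matches Figure~\ref{fig:conorms_oC} with $a=2\sqrt{|D_1|}$, $b=2\sqrt{|D_2|}$, $c=\sqrt{|D_3|}$.

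To know this putative diagram is a genuine conorm diagram, the superbase must be obtuse, i.e.\ every off-diagonal Gram entry must be $\leq0$. All are manifestly non-positive except the $(1,3)$ entry $|D_1|-|D_2|$, which is $\leq0$ exactly because, in case (ii), the $D_i$ are ordered so that $|D_1|\leq|D_2|$. Invoking this ordering convention is the one place the argument uses more than formal computation, and it means no Voronoi-reduction step is needed. The primitive-tetragonal exclusion is then quick: by the caption of Figure~\ref{fig:conorms_oC} the shape is tetragonal precisely when $a=b$, i.e.\ $|D_1|=|D_2|$; as the $D_i$ are distinct squarefree integers this forces $D_2=-D_1$, whence $g_3=\gcd(D_1,D_2)=|D_1|$ and $D_3=D_1D_2/g_3^2=-1\equiv3\mod{4}$, contradicting $D_3\equiv1\mod{4}$.

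The primitive-hexagonal exclusion is where I expect the real work. By the same caption the shape is hexagonal exactly when $b=\sqrt3\,a$, i.e.\ $|D_2|=3|D_1|$; feeding this into $D_3=D_1D_2/g_3^2$ with $g_3=\gcd(D_1,D_2)$ and using that $D_3$ is squarefree forces $g_3=|D_1|$ and hence $|D_3|=3$. I would then want to contradict $D_3\equiv1\mod{4}$, exactly as in the tetragonal case. The subtlety — and the step I would scrutinize hardest — is the sign of $D_3$: the residue argument rules out $D_3=3$ (which is $\equiv3\mod{4}$), but $|D_3|=3$ also permits $D_3=-3$, and $-3\equiv1\mod{4}$. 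So to finish one must determine whether the opposite-sign configuration $D_2=-3D_1$, $D_3=-3$ genuinely occurs as a case-(ii) field; this requires examining the sign conventions on the $g_k$ together with the global constraint $D_1D_2D_3>0$, rather than parity alone. I regard reconciling this sign analysis with the asserted conclusion as the crux of the proposition, and the point at which the argument is genuinely delicate rather than routine.
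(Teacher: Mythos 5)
Everything you actually complete coincides with the paper's proof, which is stated in one line as being ``along the same lines as for case (i)'': your perp computation, the superbase argument, the Gram-matrix expansion via Lemma~\ref{lem:usefulcomputations}, the conorm matching, the observation that obtuseness is exactly the ordering convention $|D_1|\leq|D_2|$, and your tetragonal exclusion ($a=b$ forces $D_2=-D_1$, hence $D_3=-1\not\equiv1\mod{4}$) are precisely what the paper does.

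The step you single out as the crux, however, is exactly the step the paper gets wrong, and the sign analysis you call for resolves \emph{against} the proposition rather than in its favor. The paper's entire hexagonal exclusion is: ``If $D_2=3D_1$, then $D_3=3$, but $D_3\equiv1\mod{4}$''; the opposite-sign configuration $D_2=-3D_1$, $D_3=-3$ is never addressed. And it genuinely occurs: $(D_1,D_2,D_3)=(2,-6,-3)$, i.e.\ $K=\QQ(\sqrt{2},\sqrt{-3})$, lies in $\mc{D}^{(ii)}$ --- the $D_i$ are squarefree, $D_k=D_iD_j/\gcd(D_i,D_j)^2$ holds for all permutations, $D_1D_2D_3=36>0$, $D_1\equiv D_2\equiv2\mod{4}$, $D_3\equiv1\mod{4}$, and $|D_1|<|D_2|$ --- so neither the sign convention on the $g_k$ (here $(g_1,g_2,g_3)=(-3,1,2)$) nor positivity of $D_1D_2D_3$ rescues the claim. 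Its shape has $a=2\sqrt{2}$, $b=2\sqrt{6}=\sqrt{3}\,a$, $c=\sqrt{3}$, and by the paper's own definition of $hP$ and the caption of Figure~\ref{fig:conorms_oC}, a base-centered orthorhombic lattice with $b=\sqrt{3}\,a$ is primitive hexagonal, with no condition on $c$. One can also see this without conorms: whenever $|D_2|=3|D_1|$, the vectors $\gamma_{1,\perp}^{(ii)}$ and $\gamma_{3,\perp}^{(ii)}$ have equal length and meet at $120^\circ$, and both are orthogonal to $\gamma_{2,\perp}^{(ii)}$, so $j(\mc{O}_K^\perp)$ is the orthogonal sum of a planar hexagonal lattice and a rank-one lattice. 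Thus the hexagonal half of the ``in particular'' clause is false as stated (the tetragonal half is fine), as is the ``only if'' direction of the hexagonality criterion in part (a) of Theorem~\ref{thm:V4_main_theorem}; this occurs for every $K=\QQ(\sqrt{D_1},\sqrt{-3})$ with $D_1$ squarefree, prime to $3$, and $D_1\equiv2$ or $3\mod{4}$, and, notably, the offending triple $(2,-6,-3)$ even appears in the remark following Theorem~\ref{thm:V4_main_theorem}. In short, your proposal is correct as far as it goes and is more careful than the paper at the one delicate point; the gap you flagged cannot be closed, because it is an error in the paper, not a defect in your argument.
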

\begin{proof}
	The proof is along the same lines as for case (i). If $D_2=3D_1$, then $D_3=3$, but ${D_3\equiv1\mod{4}}$, so that the shape is never hexagonal. Similarly, if $a=b$, then $D_2=-D_1$ so that ${D_3=-1\not\equiv1\mod{4}}$.
\end{proof}

\subsection{$K$ unramified at $2$}
Let
\begin{align*}
	\gamma_0&=\frac{1}{4}\left(1+\epsilon\sqrt{D_1}+\sqrt{D_2}+\sqrt{D_3}\right)\\
	\gamma_1&=\frac{1}{4}\left(1-\epsilon\sqrt{D_1}-\sqrt{D_2}+\sqrt{D_3}\right)\\
	\gamma_2&=\frac{1}{4}\left(1+\epsilon\sqrt{D_1}-\sqrt{D_2}-\sqrt{D_3}\right)\\
	\gamma_3&=\frac{1}{4}\left(1-\epsilon\sqrt{D_1}+\sqrt{D_2}-\sqrt{D_3}\right).
\end{align*}

\penalty-100
\begin{proposition}
	The tuple $(\gamma_0,\gamma_1,\gamma_2,\gamma_3)$ is a normal integral basis of $\mc{O}_K$.
\end{proposition}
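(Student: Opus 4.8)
The plan is to prove the two halves of the claim separately: that $\{\gamma_0,\gamma_1,\gamma_2,\gamma_3\}$ is a single orbit under $\Gal(K/\QQ)=V_4$ (the \emph{normal} part), and that it is an integral basis. The observation that shortens both halves is that $\gamma_0$ is exactly the fourth element $\frac{1}{4}(1+\epsilon\sqrt{D_1}+\sqrt{D_2}+\sqrt{D_3})$ of Williams's integral basis from Theorem~\ref{thm:Williams}(iii); in particular $\gamma_0\in\mc{O}_K$.

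First I would fix generators of $V_4$: let $\sigma$ send $\sqrt{D_1}\mapsto-\sqrt{D_1}$ and fix $\sqrt{D_2}$, and let $\tau$ fix $\sqrt{D_1}$ and send $\sqrt{D_2}\mapsto-\sqrt{D_2}$. Since $\sqrt{D_3}$ is a rational multiple of $\sqrt{D_1}\sqrt{D_2}$, the action on $\sqrt{D_3}$ is forced: both $\sigma$ and $\tau$ negate $\sqrt{D_3}$ while $\sigma\tau$ fixes it. A direct sign check then gives $\gamma_1=\sigma\tau(\gamma_0)$, $\gamma_2=\tau(\gamma_0)$, and $\gamma_3=\sigma(\gamma_0)$, so the tuple is precisely the Galois orbit of $\gamma_0$. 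Because $\mc{O}_K$ is stable under $\Gal(K/\QQ)$ and $\gamma_0\in\mc{O}_K$, all four $\gamma_k$ are algebraic integers; this settles normality and integrality at once.

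It then remains to see that the $\gamma_k$ span $\mc{O}_K$ over $\ZZ$, for which I would compute the transition to Williams's basis $(w_0,w_1,w_2,w_3)=\left(1,\tfrac{1+\sqrt{D_1}}{2},\tfrac{1+\sqrt{D_2}}{2},\gamma_0\right)$. The relations fall out of the definitions: $\gamma_0=w_3$, $\gamma_0+\gamma_3=w_2$, $\gamma_0+\gamma_1+\gamma_2+\gamma_3=1=w_0$, and $\gamma_0+\gamma_2=\frac{1+\epsilon\sqrt{D_1}}{2}$, $\gamma_1+\gamma_3=\frac{1-\epsilon\sqrt{D_1}}{2}$. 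Each $w_i$ is thus an integer combination of the $\gamma_k$, so $\mc{O}_K=\ZZ w_0+\cdots+\ZZ w_3\subseteq\ZZ\gamma_0+\cdots+\ZZ\gamma_3$; combined with the reverse inclusion from integrality, the $\gamma_k$ generate $\mc{O}_K$, and being four generators of a free $\ZZ$-module of rank $4$ they form a basis. Equivalently, the transition matrix lies in $\gl_4(\ZZ)$.

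The one place requiring care—and the only real obstacle—is the sign $\epsilon$. When $\epsilon=1$ one recovers $w_1$ cleanly as $\gamma_0+\gamma_2=\frac{1+\sqrt{D_1}}{2}$, whereas when $\epsilon=-1$ this same sum equals $\frac{1-\sqrt{D_1}}{2}=w_0-w_1$ and instead $w_1=\gamma_1+\gamma_3$. I would record the two cases and verify in each that the resulting $4\times4$ integer transition matrix has determinant $\pm1$ (a one-line cofactor expansion along the row coming from $\gamma_0=w_3$). All of the field-theoretic content is already packaged in Williams's theorem, so nothing beyond this bookkeeping is needed.
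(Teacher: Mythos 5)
Your proof is correct and follows essentially the same route as the paper: the paper's proof consists precisely of the relations $1=\sum_i\gamma_i$, $\frac{1+\sqrt{D_2}}{2}=\gamma_0+\gamma_3$, and $\frac{1+\sqrt{D_1}}{2}=\gamma_0+\gamma_2$ (if $\epsilon=1$) or $\gamma_1+\gamma_3$ (if $\epsilon=-1$), concluding that the change of basis to Williams's basis is invertible. You merely make explicit two points the paper leaves implicit—that the $\gamma_k$ form a single Galois orbit of $\gamma_0=w_3$ (hence are integral and give a \emph{normal} basis) and the rank-$4$ spanning argument—which is careful bookkeeping, not a different method.
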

\begin{proof}
	Note that
	\[
		1=\sum_{i=0}^3\gamma_i,\quad\frac{1+\sqrt{D}_2}{2}=\gamma_0+\gamma_3,\quad\text{and}\quad\frac{1+\sqrt{D_1}}{2}=\begin{cases}
													\gamma_0+\gamma_2,\text{ if }\epsilon=1,\\
													\gamma_1+\gamma_3,\text{ if }\epsilon=-1,
												\end{cases}
	\]
	indicating that the change of basis from the $\gamma_i$ to that of Williams is invertible.
\end{proof}
\begin{proposition}
	When $|D_1|+|D_2|>|D_3|$, the elements
	\begin{align*}
		\gamma_{0,\perp}&=\epsilon\sqrt{D_1}+\sqrt{D_2}+\sqrt{D_3}\\
		\gamma_{1,\perp}&=-\epsilon\sqrt{D_1}-\sqrt{D_2}+\sqrt{D_3}\\
		\gamma_{2,\perp}&=\epsilon\sqrt{D_1}-\sqrt{D_2}-\sqrt{D_3}\\
		\gamma_{3,\perp}&=-\epsilon\sqrt{D_1}+\sqrt{D_2}-\sqrt{D_3}
	\end{align*}
	form an obtuse superbase of $\mc{O}_K^\perp$. Its Gram matrix (scaled by $2^{-2}$) is
	\[
		\vect{|D_1|+|D_2|+|D_3|	& -|D_1|-|D_2|+|D_3|	& |D_1|-|D_2|-|D_3|	& -|D_1|+|D_2|-|D_3|	\\
			-|D_1|-|D_2|+|D_3|	& |D_1|+|D_2|+|D_3|	& -|D_1|+|D_2|-|D_3|	& |D_1|-|D_2|-|D_3|\\
			|D_1|-|D_2|-|D_3|	& -|D_1|+|D_2|-|D_3|	& |D_1|+|D_2|+|D_3|	& -|D_1|-|D_2|+|D_3|	\\
			-|D_1|+|D_2|-|D_3|	& |D_1|-|D_2|-|D_3|	& -|D_1|-|D_2|+|D_3|	&|D_1|+|D_2|+|D_3|
		}
	\]
	yielding a conorm diagram as in Figure~\ref{fig:conorms_oI}(b) with $a=2\sqrt{|D_1|},b=2\sqrt{|D_2|}$, and $c=2\sqrt{|D_3|}$.
\end{proposition}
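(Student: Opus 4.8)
The plan is to mirror the ramified cases (Propositions~\ref{prop:V4_case_i} and~\ref{prop:V4_case_ii}): identify the $\gamma_{k,\perp}$ as perp-images of a trace-one integral basis, verify the obtuse-superbase axioms, compute the Gram matrix from the orthogonality of Lemma~\ref{lem:usefulcomputations}, and read off the conorm diagram.

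First I would confirm that each $\gamma_{k,\perp}$ equals $\gamma_k^\perp=4\gamma_k-\tr(\gamma_k)$. Since $\tr(\sqrt{D_i})=0$ (Lemma~\ref{lem:usefulcomputations}) and $\tr(1)=4$, every $\gamma_k$ in the normal integral basis has trace $1$, so $\gamma_k^\perp=4\gamma_k-1$; expanding recovers the four displayed expressions. As $\gamma_0=1-\gamma_1-\gamma_2-\gamma_3$, the tuple $(1,\gamma_1,\gamma_2,\gamma_3)$ is an integral basis, and the recipe of \S\ref{sec:shape} then makes $(\gamma_{1,\perp},\gamma_{2,\perp},\gamma_{3,\perp})$ a $\ZZ$-basis of $\mc{O}_K^\perp$. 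The sign patterns give $\gamma_{0,\perp}+\gamma_{1,\perp}+\gamma_{2,\perp}+\gamma_{3,\perp}=0$, so the four elements form a superbase.

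Computing the Gram matrix is then bookkeeping. Each $\gamma_{k,\perp}$ is a signed combination $\pm\epsilon\sqrt{D_1}\pm\sqrt{D_2}\pm\sqrt{D_3}$, so by Lemma~\ref{lem:usefulcomputations} (orthogonality, with $\langle j(\sqrt{D_i}),j(\sqrt{D_i})\rangle=4|D_i|$, and using $\epsilon^2=1$ so that $\epsilon$ drops out) each inner product $\langle j(\gamma_{k,\perp}),j(\gamma_{l,\perp})\rangle$ is $4$ times the weighted dot product of the two sign-vectors. This reproduces the displayed matrix after scaling by $2^{-2}$.

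The one genuine input is the \emph{obtuse} condition, and it is exactly where the hypothesis is used. The off-diagonal entries of the scaled matrix take the three values $-|D_1|-|D_2|+|D_3|$, $|D_1|-|D_2|-|D_3|$, and $-|D_1|+|D_2|-|D_3|$, and obtuseness demands all three be $\leq0$. The first is $\leq0$ precisely because $|D_1|+|D_2|>|D_3|$, while the other two follow automatically from the case-(iii) ordering $|D_1|<|D_2|<|D_3|$ (e.g.\ $|D_1|<|D_2|<|D_2|+|D_3|$). Negating the entries yields the six conorms $p_{01}=p_{23}=P_3$, $p_{02}=p_{13}=P_1$, $p_{03}=p_{12}=P_2$ (all positive), together with the central value $0$, which is precisely the labeling of Figure~\ref{fig:conorms_oI}(b) with $a=2\sqrt{|D_1|}$, $b=2\sqrt{|D_2|}$, $c=2\sqrt{|D_3|}$; here $a^2+b^2>c^2$ translates back into the hypothesis. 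I anticipate no real obstacle beyond careful sign bookkeeping; the only point worth stating explicitly is that the hypothesis together with the ordering is exactly what guarantees obtuseness, so that the chosen superbase is genuinely obtuse and its putative conorm diagram is already a conorm diagram.
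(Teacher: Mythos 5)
Your proposal is correct and follows essentially the same route as the paper's own (much terser) proof: apply the perp map to the normal integral basis $(\gamma_0,\gamma_1,\gamma_2,\gamma_3)$, compute the Gram matrix using the orthogonality of the $j(\sqrt{D_i})$ from Lemma~\ref{lem:usefulcomputations}, and observe that obtuseness follows from combining the hypothesis $|D_1|+|D_2|>|D_3|$ with the case-(iii) ordering $|D_1|<|D_2|<|D_3|$. Your write-up is in fact more explicit than the paper's, which simply says the proof is ``similar to previous results'' and records exactly the two points you verify in detail.
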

\begin{proof}
	The proof is similar to previous results. Note in particular that cross terms $\langle\gamma_{i,\perp},\gamma_{k,\perp}\rangle$ ($i\neq k$) vanish, making things simpler. Also, note that combining $|D_1|+|D_2|>|D_3|$ with $|D_1|<|D_2|<|D_3|$ implies that the off-diagonal entries are all negative, as desired.
\end{proof}
When $|D_1|+|D_2|<|D_3|$, we will need a different integral basis for $K$.
\begin{lemma}
	The elements
	\begin{align*}
	\gamma_0^\prime&=\frac{1}{4}\left(1+\epsilon\sqrt{D_1}+\sqrt{D_2}+\sqrt{D_3}\right)\\
	\gamma_1^\prime&=\frac{1}{2}\left(1-\epsilon\sqrt{D_1}\right)\\
	\gamma_2^\prime&=\frac{1}{4}\left(-1+\epsilon\sqrt{D_1}+\sqrt{D_2}-\sqrt{D_3}\right)\\
	\gamma_3^\prime&=\frac{1}{2}\left(1-\sqrt{D_2}\right)
	\end{align*}
	form an integral basis of $\mc{O}_K$.
\end{lemma}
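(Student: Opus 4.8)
The plan is to argue exactly as in the preceding lemmas: I would show that the change of basis between the tuple $(\gamma_0^\prime,\gamma_1^\prime,\gamma_2^\prime,\gamma_3^\prime)$ and the integral basis $\left(1,\frac{1+\sqrt{D_1}}{2},\frac{1+\sqrt{D_2}}{2},\frac{1+\epsilon\sqrt{D_1}+\sqrt{D_2}+\sqrt{D_3}}{4}\right)$ of Theorem~\ref{thm:Williams}(iii) is invertible over $\ZZ$. Writing $w_0,w_1,w_2,w_3$ for the four Williams basis vectors, it suffices to exhibit enough $\ZZ$-linear identities to conclude that each $\gamma_i^\prime$ lies in $\ZZ\langle w_0,w_1,w_2,w_3\rangle$ and that each $w_j$ lies in $\ZZ\langle\gamma_0^\prime,\gamma_1^\prime,\gamma_2^\prime,\gamma_3^\prime\rangle$. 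The two inclusions of full-rank lattices then force $\ZZ\langle\gamma_i^\prime\rangle=\mc{O}_K$, and four generators of a free rank-$4$ module are automatically a basis, so the $\gamma_i^\prime$ form an integral basis.

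First I would record the immediate identity $\gamma_0^\prime=w_3$. Next I would note the ``summing to one'' relation
\[
	\gamma_0^\prime+\gamma_1^\prime+\gamma_2^\prime+\gamma_3^\prime=1=w_0,
\]
which follows since all the $\sqrt{D_i}$ terms cancel in pairs. Together with $\gamma_3^\prime=\frac{1}{2}(1-\sqrt{D_2})=1-w_2$, this gives $w_2=\gamma_0^\prime+\gamma_1^\prime+\gamma_2^\prime$. The only place the argument is sensitive is the recovery of $w_1=\frac{1+\sqrt{D_1}}{2}$, since $\gamma_1^\prime=\frac{1}{2}(1-\epsilon\sqrt{D_1})$ depends on $\epsilon$. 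Exactly as in the normal integral basis proposition above, I would split into cases: when $\epsilon=1$ we have $w_1=1-\gamma_1^\prime=\gamma_0^\prime+\gamma_2^\prime+\gamma_3^\prime$, and when $\epsilon=-1$ we have $w_1=\gamma_1^\prime$. This exhibits all four $w_j$ as $\ZZ$-combinations of the $\gamma_i^\prime$.

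For the reverse inclusion I would invert these relations, equivalently checking integrality of the $\gamma_i^\prime$ directly. Using $\sqrt{D_1}=2w_1-w_0$, $\sqrt{D_2}=2w_2-w_0$, and $\sqrt{D_3}=4w_3-w_0-\epsilon\sqrt{D_1}-\sqrt{D_2}$, one sees that $\frac{1+\sqrt{D_3}}{2}\in\mc{O}_K$ is a $\ZZ$-combination of the $w_j$ (namely $w_0+2w_3-w_1-w_2$ when $\epsilon=1$ and $2w_3+w_1-w_2$ when $\epsilon=-1$). Since $\gamma_0^\prime-\gamma_2^\prime=\frac{1+\sqrt{D_3}}{2}$, this yields $\gamma_2^\prime=-w_0+w_1+w_2-w_3$ when $\epsilon=1$ and $\gamma_2^\prime=-w_1+w_2-w_3$ when $\epsilon=-1$; the remaining $\gamma_1^\prime=w_0-w_1$ or $w_1$, and $\gamma_3^\prime=w_0-w_2$, are then manifestly integral.

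I do not expect a genuine obstacle here: the content is entirely the bookkeeping of a unimodular $4\times 4$ change of basis, and the only subtlety is the $\epsilon=\pm1$ dichotomy in the relation tying $\gamma_1^\prime$ to $w_1$, which is dispatched by the case split above. I would also point out that the hypothesis $|D_1|+|D_2|<|D_3|$ plays no role in this lemma — it enters only later when producing an obtuse superbase from this basis — so the integral basis claim in case (iii) holds unconditionally.
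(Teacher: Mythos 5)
Your proposal is correct, but it takes a different (and longer) route than the paper. The paper does not go back to Williams's basis at all: it expresses the $\gamma_i^\prime$ in terms of the normal integral basis $(\gamma_0,\gamma_1,\gamma_2,\gamma_3)$ established in the proposition immediately preceding this lemma, via the four identities
\[
	\gamma_0^\prime=\gamma_0,\quad\gamma_1^\prime=\gamma_1+\gamma_3,\quad\gamma_2^\prime=-\gamma_1,\quad\gamma_3^\prime=\gamma_1+\gamma_2,
\]
whose change-of-basis matrix is visibly unimodular (determinant $-1$). Because both the $\gamma_i$ and the $\gamma_i^\prime$ carry the sign $\epsilon$ in front of $\sqrt{D_1}$ uniformly, no case split on $\epsilon$ is needed there. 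Your argument instead compares the $\gamma_i^\prime$ directly with the Williams basis $\left(1,\frac{1+\sqrt{D_1}}{2},\frac{1+\sqrt{D_2}}{2},\frac{1+\epsilon\sqrt{D_1}+\sqrt{D_2}+\sqrt{D_3}}{4}\right)$ of Theorem~\ref{thm:Williams}(iii); that basis does not carry $\epsilon$ on $\sqrt{D_1}$, which is exactly what forces your $\epsilon=\pm1$ dichotomy and the two-way inclusion bookkeeping. All of your identities check out (in particular $\gamma_0^\prime-\gamma_2^\prime=\frac{1+\sqrt{D_3}}{2}$ and the resulting expressions for $\gamma_2^\prime$ in both cases), and your concluding principle --- mutual inclusion of full-rank lattices plus the fact that four generators of a free rank-$4$ $\ZZ$-module form a basis --- is sound. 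What your route buys is independence from the intermediate proposition (you rely only on Williams's theorem); what the paper's route buys is brevity and the avoidance of any case analysis, since the intermediate basis was engineered to absorb $\epsilon$. Your closing observation is also consistent with the paper: the hypothesis $|D_1|+|D_2|<|D_3|$ is not part of the lemma and is used only in the subsequent proposition, where obtuseness of the associated superbase is at stake.
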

\begin{proof}
	Indeed,
	\[
		\gamma_0^\prime=\gamma_0,\quad\gamma_1^\prime=\gamma_1+\gamma_3,\quad\gamma_2^\prime=-\gamma_1,\quad\text{and}\quad\gamma_3^\prime=\gamma_1+\gamma_2,
	\]
	so that the change of basis between these two collections is invertible.
\end{proof}
\begin{proposition}
	When $|D_1|+|D_2|<|D_3|$, the elements
	\begin{align*}
	\gamma_{0,\perp}^\prime&=\epsilon\sqrt{D_1}+\sqrt{D_2}+\sqrt{D_3}\\
	\gamma_{1,\perp}^\prime&=-2\epsilon\sqrt{D_1}\\
	\gamma_{2,\perp}^\prime&=\epsilon\sqrt{D_1}+\sqrt{D_2}-\sqrt{D_3}\\
	\gamma_{3,\perp}^\prime&=-2\sqrt{D_2}
	\end{align*}
	form an obtuse superbase of $\mc{O}_K^\perp$. Its Gram matrix (scaled by $2^{-2}$) is
	\[
		\vect{
|D_{1}| + |D_{2}| + |D_{3}| & -2 |D_{1}| & |D_{1}| + |D_{2}| - |D_{3}| & -2 |D_{2}| \\
-2 |D_{1}| & 4 |D_{1}| & -2 |D_{1}| & 0 \\
|D_{1}| + |D_{2}| - |D_{3}| & -2 |D_{1}| & |D_{1}| + |D_{2}| + |D_{3}|& -2 |D_{2}| \\
-2 |D_{2}| & 0 & -2|D_{2}| & 4 |D_{2}|
		}
	\]
	yielding a conorm diagram as in Figure~\ref{fig:conorms_oI}(a) with $a=2\sqrt{|D_1|},b=2\sqrt{|D_2|}$, and $c=2\sqrt{|D_3|}$.
\end{proposition}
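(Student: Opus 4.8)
The plan is to follow the same template as the three preceding propositions in this subsection, since this is the last of the four integral-basis cases. First I would verify that the four displayed elements are genuinely the perp-images of the integral basis from the preceding lemma. Recalling from \S\ref{sec:shape} that the perp map is $\alpha^\perp=4\alpha-\tr(\alpha)$ and that $\tr(\sqrt{D_i})=0$ by Lemma~\ref{lem:usefulcomputations}, one reads off $\tr(\gamma_0')=1$, $\tr(\gamma_1')=\tr(\gamma_3')=2$, and $\tr(\gamma_2')=-1$; substituting these into $4\gamma_k'-\tr(\gamma_k')$ reproduces exactly $\gamma_{0,\perp}',\dots,\gamma_{3,\perp}'$ as stated, and in particular confirms they lie in $\mc{O}_K^\perp$.

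Next I would establish the superbase property. The identity $1=\gamma_0'+\gamma_1'+\gamma_2'+\gamma_3'$ (a one-line check, or inherited from $1=\sum_i\gamma_i$ via the basis-change recorded in the preceding lemma) shows, upon applying the $\QQ$-linear perp map, that $\gamma_{0,\perp}'+\gamma_{1,\perp}'+\gamma_{2,\perp}'+\gamma_{3,\perp}'=1^\perp=0$, the additive superbase condition. Since $(\gamma_0',\gamma_1',\gamma_2',\gamma_3')$ is an integral basis and $\gamma_0'=1-\gamma_1'-\gamma_2'-\gamma_3'$, the tuple $(1,\gamma_1',\gamma_2',\gamma_3')$ is also a $\ZZ$-basis of $\mc{O}_K$; by the general principle of \S\ref{sec:shape} the perp map (whose kernel is $\ZZ\cdot1$) then carries it to a $\ZZ$-basis $(\gamma_{1,\perp}',\gamma_{2,\perp}',\gamma_{3,\perp}')$ of $\mc{O}_K^\perp$. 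Combined with the vanishing sum, this makes $(\gamma_{0,\perp}',\dots,\gamma_{3,\perp}')$ a superbase.

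I would then compute the Gram matrix directly from Lemma~\ref{lem:usefulcomputations}, using $\langle j(\sqrt{D_i}),j(\sqrt{D_k})\rangle=4|D_i|\delta_{ik}$ and $\epsilon^2=1$; the cross-terms collapse to signed sums of the $4|D_i|$, reproducing the displayed matrix after scaling by $2^{-2}$. The only off-diagonal entries are $-2|D_1|$, $-2|D_2|$, $0$, and $|D_1|+|D_2|-|D_3|$. This is where the case hypothesis enters and is the one genuinely non-formal point: the obtuse condition $v_i\cdot v_j\leq0$ holds precisely because $|D_1|+|D_2|<|D_3|$ forces the entry $|D_1|+|D_2|-|D_3|$ to be negative (the complementary inequality is exactly why the previous proposition required a different superbase). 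Having produced an explicit obtuse superbase, Conway--Sloane guarantee by definition that its putative conorm diagram is an honest conorm diagram, so no Voronoi reduction is needed.

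Finally I would read off the conorms $p_{ij}=-\langle\gamma_{i,\perp}',\gamma_{j,\perp}'\rangle$ (scaled) and match them to Figure~\ref{fig:conorms_oI}(a): setting $a=2\sqrt{|D_1|}$, $b=2\sqrt{|D_2|}$, $c=2\sqrt{|D_3|}$ gives $p_{01}=p_{12}=a^2/2$, $p_{03}=p_{23}=b^2/2$, $p_{13}=0$, and $p_{02}=(c^2-a^2-b^2)/4=|D_3|-|D_1|-|D_2|$, which is exactly the labeling of that figure, with the indices of our superbase matching the positions in the diagram directly. The main thing to be careful about is this last bookkeeping step---ensuring the vector indexing lines up with the positions in Figure~\ref{fig:conorms_oI}(a)---rather than any conceptual difficulty; the substantive input is simply that the hypothesis $|D_1|+|D_2|<|D_3|$ is what places us (strictly) in combinatorial type (a).
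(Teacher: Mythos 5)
Your proposal is correct and follows exactly the route the paper intends: the paper's own proof is just ``Similar to above,'' deferring to the template of the preceding propositions (perp images via $\tr(\sqrt{D_i})=0$, superbase from the sum relation and the integral-basis change, Gram matrix via Lemma~\ref{lem:usefulcomputations}, and obtuseness from the case hypothesis $|D_1|+|D_2|<|D_3|$), which is precisely what you carry out. If anything, your write-up is more complete than the paper's, and your identification of the sign of $|D_1|+|D_2|-|D_3|$ as the one place the hypothesis enters matches the remark made in the proof of the complementary case.
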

\begin{proof}
	Similar to above.
\end{proof}
This completes the proof of Theorem~\ref{thm:V4_main_theorem}.

\subsection{Uniqueness of the shape}
Although different $V_4$-quartic fields can have the same shape, we have the following result on the uniqueness of the shape in certain natural families.
\begin{corollary}\label{cor:V4_shape_complete_invariant}\mbox{}
	\begin{enumerate}
		\item The shape of a totally real $V_4$-quartic field determines it amongst the family of all totally real $V_4$-quartic fields.
		\item The shape of a tame $V_4$-quartic field determines it amongst the family of all tame $V_4$-quartic fields.
	\end{enumerate}
\end{corollary}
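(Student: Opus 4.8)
The plan is to read off from the shape exactly enough arithmetic to reconstruct the field. Recall that a $V_4$-quartic field $K$ is determined by the unordered triple $\{\Delta_1,\Delta_2,\Delta_3\}$ of fundamental discriminants of its quadratic subfields, since $K=\QQ(\sqrt{\Delta_1},\sqrt{\Delta_2})$. By Theorem~\ref{thm:V4_main_theorem}, the shape records whether or not $2$ ramifies (the shape being a base-centered orthorhombic lattice ($oC$) exactly when $2$ ramifies and a body-centered orthorhombic lattice ($oI$) otherwise), together with the ratios $\sqrt{|\Delta_1|}:\sqrt{|\Delta_2|}:\sqrt{|\Delta_3|}$; equivalently, it determines the multiset $\{|\Delta_1|,|\Delta_2|,|\Delta_3|\}$ \emph{up to a common positive scalar} $\lambda$, and it forgets the signs of the $\Delta_i$. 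Two fields in the same family with the same shape thus share the same ratio, so it suffices to show that, within each family, the ratio together with the defining hypothesis pins down both $\lambda$ and the signs. Throughout, write $|\Delta_i|=\lambda s_i$, where $s_1:s_2:s_3$ is the ratio in lowest terms.

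First I would pin down $\lambda$. Since $\gcd(s_1,s_2,s_3)=1$, the scalar $\lambda$ is a positive integer. If an odd prime $p$ divided $\lambda$, then $p\mid\Delta_i$, hence $p\mid D_i$, for all three $i$; but $D_1D_2D_3=(g_1g_2g_3)^2$ (Theorem~\ref{thm:Williams}) and the $D_i$ are squarefree, so the number of $D_i$ divisible by $p$ is even, a contradiction. Thus $\lambda=2^t$ for some $t\geq0$. To determine $t$, recall that a fundamental discriminant $\Delta$ has $v_2(\Delta)\in\{0,2,3\}$, and that the three cases of Theorem~\ref{thm:Williams} produce the $2$-adic valuation triples $\bigl(v_2(\Delta_1),v_2(\Delta_2),v_2(\Delta_3)\bigr)=(3,3,2)$ in case~(i), $(3,3,0)$ or $(2,2,0)$ in case~(ii), and $(0,0,0)$ in case~(iii). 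Subtracting the minimum entry from each yields the pairwise distinct multisets $\{1,1,0\},\{3,3,0\},\{2,2,0\},\{0,0,0\}$; since this normalized multiset equals $\{v_2(s_1),v_2(s_2),v_2(s_3)\}$, it can be read off from the ratio and so determines the case, hence $t$. (In the $oI$ case the shape already signals case~(iii), so $\lambda$ is odd and therefore $\lambda=1$ outright.) Consequently the shape determines the exact multiset $\{|\Delta_1|,|\Delta_2|,|\Delta_3|\}$.

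It remains to recover the signs, which is where the two hypotheses enter differently. For a totally real field all $D_i>0$, so $\Delta_i=|\Delta_i|$, and the recovered multiset is exactly $\{\Delta_1,\Delta_2,\Delta_3\}$, determining $K$; this proves~(a). For a tame field we are in case~(iii), where $\Delta_i=D_i\equiv1\mod{4}$ and $\lambda=1$, so the shape recovers $\{|D_1|,|D_2|,|D_3|\}$ exactly; the congruence $D_i\equiv1\mod{4}$ then forces the sign of each $D_i$ from its absolute value (positive if $|D_i|\equiv1\mod{4}$, negative if $|D_i|\equiv3\mod{4}$), again recovering $\{D_1,D_2,D_3\}$ and hence $K$, proving~(b). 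The main obstacle is precisely this sign-and-scale recovery: the shape genuinely sees only $\{|\Delta_i|\}$ up to scaling, so without a family hypothesis the signs are lost. That this is unavoidable is illustrated by the preceding remark's pairs $(2,6,3)$, $(-2,-6,3)$ (and $(2,-6,-3)$, $(-2,6,-3)$), which have identical shapes yet are non-isomorphic; this is consistent exactly because such a pair never lies in a single one of our two families (one member is totally real while the other is not, and both are wild rather than tame).
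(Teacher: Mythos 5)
Your proposal is correct and follows essentially the same route as the paper's proof: both recover the triple of quadratic discriminants from the shape by passing to the lowest-terms integer ratio of the $|\Delta_i|$, identifying which of Williams' cases (i)--(iii) one is in from $2$-adic data (you use the multiset of $2$-adic valuations, the paper uses congruences mod $4$ of the cleared-denominator tuple), rescaling by the resulting power of $2$, and then recovering the signs from the totally-real or tame hypothesis. The only cosmetic difference is that you justify the scaling factor being a power of $2$ via the parity of $v_p(D_1D_2D_3)=v_p\bigl((g_1g_2g_3)^2\bigr)$, whereas the paper gets the lowest-terms property directly from the pairwise coprimality of the $g_i$.
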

\begin{proof}
	Suppose you know that you have the shape of a totally real field $K$. Knowing the shape tells you the ratios $\Delta_1:\Delta_2:\Delta_3$. A representative of these ratios is $(1,\Delta_2/\Delta_1,\Delta_3/\Delta_1)$. In cases (i) and (iii),
	\[
		\frac{\Delta_2}{\Delta_1}=\frac{g_1}{g_2}\quad\text{and}\quad\frac{\Delta_3}{\Delta_1}=\frac{g_1}{g_3},
	\]
	since $D_i/D_j=g_j/g_i$. In case case (ii),
	 \[
		\frac{\Delta_2}{\Delta_1}=\frac{g_1}{g_2}\quad\text{and}\quad\frac{\Delta_3}{\Delta_1}=\frac{g_1}{4g_3}
	\]
	In all cases, $2\nmid g_1,g_2$ and the $g_i$ are pairwise relatively prime, so these fractions are in lowest terms. Clearing denominators therefore yields
	\[
		(g_2g_3,g_1g_3,g_1g_2)\quad\text{or}\quad(4g_2g_3,4g_1g_3,g_1g_2),
	\]
	respectively. If the first two entries of the tuple you obtain from clearing denominators are $0$ modulo $4$, you then know you are in case (ii) and the tuple gives you the three discriminants $\Delta_1,\Delta_2,\Delta_3$, thus telling you the quartic field. If the three entries are $1$ modulo $4$, you know you are in case (iii) and once again the tuple is telling you the three discriminants of the quadratic subfields of $K$. Otherwise, you must be in case (i) and you get the three discriminants by multiplying the tuple by $4$.
	
	Suppose now that you know you have the shape of a field $K$ in which $2$ is unramified (equivalently $K$ is tamely ramified). Similarly, you can get the triple $(1,\left|\Delta_2/\Delta_1\right|,\left|\Delta_3/\Delta_1\right|)$. Clearing	denominators gives $(|g_2g_3|,|g_1g_3|,|g_1g_2|)$. If all these entries are $1$ modulo $4$, then you know you have a totally real field and the tuple is telling you the three discriminants $|D_i|$. Otherwise, two of the entries must be $3$ modulo $4$. Flipping the signs on these then gives the three discriminants of the quadratic subfields of $K$, once again telling you which field $K$ is.
\end{proof}

\section{The equidistribution of shapes of $V_4$-quartic fields}\label{sec:V4equid}

In this section, we prove Theorem~\ref{thm:V4equid} that the shapes of $V_4$-quartic fields are equidistributed (in a regularized sense) in appropriate two-dimensional spaces. To accomplish this, we use the Principle of Lipschitz and a fairly straightforward sieve. The result reduces to counting strongly carefree triples in a certain region of space and satisfying certain congruence conditions. This counting is done in \S\ref{sec:carefreecounting}. We begin by making explicit the relation between the fields we want to count and asymptotics for strongly carefree triples.

\subsection{Reduction to counting strongly carefree triples}
We break up the set of $V_4$-quartic fields according to the cases (i)--(iii) of \S\ref{sec:V4_shapes}. For $?=(i),(ii),$ or $(iii)$, let $\mc{K}^?$ denote the set of $V_4$-quartic fields that are in case ?. As described at the beginning of \S\ref{sec:V4_shapes}, a $V_4$-quartic field $K$ is determined by its three quadratic subfields $\QQ(\sqrt{D_1}),\QQ(\sqrt{D_2}),\QQ(\sqrt{D_3})$. Let
\[
	\mc{D}:=\left\{(D_1,D_2,D_3)\in\ZZ^3:D_i\neq0,1\text{ is squarefree and for }\{i,j,k\}=\{1,2,3\}, D_i=\dfrac{D_jD_k}{\gcd(D_j,D_k)^2}\right\}
\]
and
\begin{align*}
	\mc{D}^{(i)}	&:=\left\{(D_1,D_2,D_3)\in\mc{D}:D_1\equiv D_2\equiv2\mod{4}, D_3\equiv3\mod{4},|D_1|\leq|D_2|\right\},\\
	\mc{D}^{(ii)}	&:=\left\{(D_1,D_2,D_3)\in\mc{D}:D_1\equiv D_2\equiv2\mod{4}, D_3\equiv1\mod{4},|D_1|<|D_2|\right\},\\
				&\phantom{:=}\cup\left\{(D_1,D_2,D_3)\in\mc{D}:D_1\equiv D_2\equiv3\mod{4}, D_3\equiv1\mod{4},|D_1|<|D_2|\right\},\\
	\mc{D}^{(iii)}	&:=\left\{(D_1,D_2,D_3)\in\mc{D}:D_i\equiv1\mod{4}\text{ for each }i,|D_1|<|D_2|<|D_3|\right\}.\\
\end{align*}
We then have bijections between $\mc{K}^?$ and $\mc{D}^?$ for each of $?=(i),(ii),(iii)$. It will be convenient for counting purposes to replace the triples in $\mc{D}$ with triples of their gcd's. We will, in fact, slightly modify the notion of gcd when negative numbers are involved, essentially considering $-1$ as a prime.
\begin{definition}\mbox{}
	\begin{enumerate}
		\item For positive integers $a$ and $b$, we define
		\begin{align*}
			\ga(a,b)&:=\ga(-a,b):=\ga(a,-b):=\gcd(a,b)\\
			\ga(-a,-b)&:=-\gcd(a,b).\\
		\end{align*}
		We say that two integers $a$ and $b$ are $\ast$-relatively prime if $\ga(a,b)=1$. In particular, two negative integers are never $\ast$-relatively prime.
		\item A \textit{$\ast$-strongly carefree triple} is $(g_1,g_2,g_3)\in\ZZ^3$ such that the $g_i$ are squarefree, distinct, and pairwise $\ast$-relatively prime.
	\end{enumerate}
\end{definition}

Let $\mc{SC}$ denote the set of $\ast$-strongly carefree triples. Then the map
\[
	(g_1,g_2,g_3)\mapsto(g_2g_3,g_1g_3,g_1g_2)
\]
gives a bijection from $\mc{SC}$ to $\mc{D}$ with inverse
\[
	(D_1,D_2,D_3)\mapsto(\ga(D_2,D_3),\ga(D_1,D_3),\ga(D_1,D_2)).
\]
For $?=(i),(ii),(iii)$, let
\begin{align*}
	\mc{SC}^?&:=\left\{(g_1,g_2,g_3)\in\mc{SC}:(g_2g_3,g_1g_3,g_1g_2)\in\mc{D}^?\right\}.
\end{align*}
The above bijection restricts to bijections between $\mc{SC}^?$ and $\mc{D}^?$.

To incorporate a discriminant bound, for a positive real number $X$, let
\begin{align*}
	X_{(i)}&=\frac{X}{2^6},\\
	X_{(ii)}&=\frac{X}{2^4},\\
	X_{(iii)}&=X,
\end{align*}
and, for $?=(i),(ii),(iii)$, let
\begin{align*}
	\mc{D}^?(X_?)&:=\left\{(D_1,D_2,D_3)\in\mc{D}^?:D_1D_2D_3<X_?\right\},\\
	\mc{SC}^?(X_?)&:=\left\{(g_1,g_2,g_3)\in\mc{SC}^?:(g_1g_2g_3)^2<X_?\right\}.\\
\end{align*}
It then follows from Theorem~\ref{thm:Williams} that the bijections between $\mc{K}^?,\mc{D}^?$, and $\mc{SC}^?$ restrict to bijections between $\mc{K}^?(X_?),\mc{D}^?(X_?)$, and $\mc{SC}^?(X_?)$.

Finally, we must select for the shapes of the fields we are counting. Note that for $i\neq j$,
\[
	\frac{D_i}{D_j}=\frac{g_j}{g_i}
\]
and $|D_i|\leq |D_j|$ if and only if $|g_i|\geq |g_j|$. Let $(D_1,D_2,D_3)\in\mc{D}^{(i)}$ and let $K$ be the corresponding field. We have that $\Delta_i=4D_i$, so that by Theorem~\ref{thm:V4_main_theorem}, the shape of $K$ is $\sh_{oC}(x,y)$ with
\[
	x=\sqrt{\left|\frac{D_1}{D_3}\right|}=\sqrt{\left|\frac{g_3}{g_1}\right|}\leq y=\sqrt{\left|\frac{D_2}{D_3}\right|}=\sqrt{\left|\frac{g_3}{g_2}\right|}.
\]
For $(D_1,D_2,D_3)\in\mc{D}^{(ii)}$, we have that $\Delta_i=4D_i$ for $i=1,2$ and $\Delta_3=D_3$, so that the shape of the corresponding field is $\sh_{oC}(x,y)$ with
\[
	x=2\sqrt{\left|\frac{D_1}{D_3}\right|}=2\sqrt{\left|\frac{g_3}{g_1}\right|}<y=2\sqrt{\left|\frac{D_2}{D_3}\right|}=2\sqrt{\left|\frac{g_3}{g_2}\right|}.
\]
Finally, for $(D_1,D_2,D_3)\in\mc{D}^{(iii)}$, $\Delta_i=D_i$, so that the shape of the corresponding field is $\sh_{oI}(x,y)$ with
\[
	x=\sqrt{\left|\frac{D_1}{D_3}\right|}=\sqrt{\left|\frac{g_3}{g_1}\right|}< y=\sqrt{\left|\frac{D_2}{D_3}\right|}=\sqrt{\left|\frac{g_3}{g_2}\right|}.
\]
Let $s_{(ii)}=2$ and $s_{(i)}=s_{(ii)}=1$, and for two positive real numbers $R_1<R_2$, define
\begin{align*}
	\mc{K}^?(X_?,R_1,R_2)&:=\left\{K\in\mc{K}^?(X_?):\sh(K)\in W_?(R_1,R_2)\right\},\\
	\mc{D}^?(X_?,R_1,R_2)&:=\left\{(D_1,D_2,D_3)\in\mc{D}^?(X_?):R_1^2\leq s_?^2\left|D_1/D_3\right|\leq s_?^2\left|D_2/D_3\right|<R_2^2\right\},\\
	\mc{SC}^?(X_?,R_1,R_2)&:=\left\{(g_1,g_2,g_3)\in\mc{SC}^?:R_1^2\leq s_?^2\left|g_3/g_1\right|\leq s_?^2\left|g_3/g_2\right|<R_2^2\right\},\\
\end{align*}
where $W_?$ refers to $W_{oC}$ for $?=(i),(ii)$ and $W_{(iii)}=W_{oI}$. We have shown that
\begin{proposition}\label{prop:bijectionsKDSC}
	The bijections between $\mc{K}^?(X_?),\mc{D}^?(X_?)$, and $\mc{SC}^?(X_?)$ restrict to bijections between $\mc{K}^?(X_?,R_1,R_2),\mc{D}^?(X_?,R_1,R_2)$, and $\mc{SC}^?(X_?,R_1,R_2)$.
\end{proposition}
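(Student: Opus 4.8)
The plan is to treat this as a direct unwinding of definitions: the two bijections in question already restrict to the discriminant-bounded sets $\mc{K}^?(X_?)$, $\mc{D}^?(X_?)$, $\mc{SC}^?(X_?)$ by Theorem~\ref{thm:Williams}, so all that remains is to check that the shape constraint defining $W_?(R_1,R_2)$ corresponds, term by term, to the displayed inequalities defining $\mc{D}^?(X_?,R_1,R_2)$ and $\mc{SC}^?(X_?,R_1,R_2)$. The displayed computations immediately preceding the proposition do precisely this, once the injectivity of the shape parametrizations is invoked.

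First I would recall, using Theorem~\ref{thm:V4_main_theorem} and the formulas computed just above, that a field $K$ corresponding to $(D_1,D_2,D_3)\in\mc{D}^?$ has shape $\sh_{oC}(x,y)$ (for $?=(i),(ii)$) or $\sh_{oI}(x,y)$ (for $?=(iii)$), where $x=s_?\sqrt{|D_1/D_3|}$ and $y=s_?\sqrt{|D_2/D_3|}$. The crucial observation is that this pair $(x,y)$ is already the \emph{canonical} representative of the shape: the ordering conventions built into $\mc{D}^?$ guarantee $0<x\leq y$ in cases (i) and (ii), matching the fundamental domain $\{0<x\leq y\}$ of $\sh_{oC}$; and in case (iii) the convention $|D_1|<|D_2|<|D_3|$ yields $0<x<y<1$, matching the fundamental domain $\{0<x<y<1\}$ of $\sh_{oI}$.

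Next I would invoke the injectivity of $\sh_{oC}$ and $\sh_{oI}$ on their respective fundamental domains (the two parametrization propositions of \S\ref{sec:lattices}). Because $(x,y)$ is the canonical representative, $\sh(K)\in W_?(R_1,R_2)$ holds if and only if $R_1\leq x\leq y<R_2$. Squaring and substituting the formulas for $x$ and $y$ turns this into $R_1^2\leq s_?^2|D_1/D_3|\leq s_?^2|D_2/D_3|<R_2^2$, which is exactly the condition defining $\mc{D}^?(X_?,R_1,R_2)$; hence the bijection $\mc{K}^?(X_?)\leftrightarrow\mc{D}^?(X_?)$ restricts as claimed. Finally, the relation $|D_i/D_j|=|g_j/g_i|$ rewrites this inequality as $R_1^2\leq s_?^2|g_3/g_1|\leq s_?^2|g_3/g_2|<R_2^2$, the defining condition of $\mc{SC}^?(X_?,R_1,R_2)$, so the bijection $\mc{D}^?(X_?)\leftrightarrow\mc{SC}^?(X_?)$ restricts as well.

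The only genuine subtlety, and the step I would be most careful about, is confirming that the computed $(x,y)$ really lands in the correct fundamental domain, so that injectivity of the shape maps may legitimately be applied; in particular, in case (iii) one must verify $y<1$ (which follows from $|D_2|<|D_3|$) and not merely $x<y$, since the membership $\sh_{oI}(x,y)\in W_{oI}(R_1,R_2)$ is defined through the unique representative with $x<y<1$. Once this is secured, the remainder is a mechanical translation of inequalities requiring no further estimation.
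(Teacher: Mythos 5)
Your proposal is correct and follows essentially the same route as the paper: the paper's ``proof'' consists precisely of the computations preceding the proposition, which express the shape of the field attached to $(D_1,D_2,D_3)\in\mc{D}^?$ as $\sh_{oC}(x,y)$ or $\sh_{oI}(x,y)$ with $x=s_?\sqrt{|D_1/D_3|}=s_?\sqrt{|g_3/g_1|}$ and $y=s_?\sqrt{|D_2/D_3|}=s_?\sqrt{|g_3/g_2|}$, so that the shape condition $\sh(K)\in W_?(R_1,R_2)$ unwinds to exactly the displayed inequalities. The point you flag as the genuine subtlety---that $(x,y)$ lies in the fundamental domain so the injectivity of the parametrizations (the two propositions of \S\ref{sec:lattices}) may be applied---is indeed the reason those propositions were proved, and the paper uses them implicitly in just the way you make explicit.
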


We have thus translated our problem of counting $V_4$-quartic fields with bounded discriminant and shape in some ``box'' into a problem of counting $\ast$-strongly carefree triples satisfying certain congruence conditions lying in some region.

\subsection{Counting $\ast$-strongly carefree triples with congruence conditions}\label{sec:carefreecounting}
Our strategy for counting elements of $\mc{SC}^?(X_?,R_1,R_2)$ will be to first count triples of integers satisfying finitely many of the correct congruences, then to apply a sieve to get a count of $\ast$-strongly carefree triples.

In the previous section, we set up a bijection between $V_4$-quartic fields with bounded discriminant and constrained shape and certain triples of integers. We will view these triples as lattice points in a region of $\RR^3$ and use the Principle of Lipschitz to estimate the number of them. The Principle of Lipschitz basically estimates the number of lattices points in a ``nice'' region as the volume of that region with an error given by the lower-dimensional volumes of the projections of the region onto coordinate hyperplanes (see e.g.\ \cite[Lemma~9]{ManulQuartic} for a precise statement). Accordingly, for $N,r_1,r_2>0$ with $r_1<r_2$, let
\[
	\mc{R}(N,r_1,r_2):=\left\{(g_1,g_2,g_3)\in\RR^{\times3}:|g_1g_2g_3|<N,r_1\leq|g_3/g_1|\leq|g_3/g_2|<r_2\right\}.
\]
and let $\mc{R}^0(N,r_1,r_2)$ be its intersection with the octant $x_i>0$.
\begin{lemma}
	The volume of $\mc{R}(N,r_1,r_2)$ is
	\[
		\frac{4N}{3}\left(\log(r_2)-\log(r_1)\right)^2
	\]
	and the maximum measure of this region's lower-dimensional shadows on coordinate hyperplanes is $O(N^{2/3})$.
\end{lemma}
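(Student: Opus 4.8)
The plan is to compute the volume exactly by exploiting the multiplicative structure of the defining inequalities, and then to bound the projections by a direct geometric argument. Since the region is defined entirely through absolute values, it is invariant under each sign change $g_i\mapsto -g_i$, so its volume is $8$ times the volume of $\mc{R}^0(N,r_1,r_2)$, the part lying in the octant $g_i>0$. I would then pass to multiplicative coordinates on that octant by setting $u=g_3/g_1$, $v=g_3/g_2$, and $w=g_3$, so that $g_1=w/u$, $g_2=w/v$, $g_3=w$. A short Jacobian computation gives $dg_1\,dg_2\,dg_3=(w^2/u^2v^2)\,du\,dv\,dw$, the product becomes $g_1g_2g_3=w^3/(uv)$, and the ratio constraints become simply $r_1\le u\le v<r_2$.

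The key feature is that the ``size'' variable $w$ decouples. Integrating $w$ first over $0<w<(Nuv)^{1/3}$, the inner integral $\int_0^{(Nuv)^{1/3}}w^2\,dw=Nuv/3$ exactly cancels the $u^2v^2$ in the denominator of the Jacobian, leaving $N/(3uv)$, independent of the size. I would then integrate $N/(3uv)$ over the triangle $r_1\le u\le v<r_2$; passing to $\log u$ and $\log v$ turns this into $\tfrac{N}{3}\cdot\tfrac12(\log r_2-\log r_1)^2$, and multiplying by the $8$ octants yields the claimed $\tfrac{4N}{3}(\log r_2-\log r_1)^2$. Equivalently, the substitution $s=\log(g_3/g_1)$, $t=\log(g_3/g_2)$, $z=\log g_3$ has Jacobian $1$ against the measure $e^{g\text{-log}}$ and makes the same decoupling transparent, the $z$-integral producing $N/3$ regardless of $s$ and $t$.

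For the shadows, I would again reduce to the positive octant and bound the area of the projection onto each of the three coordinate planes. The mechanism in every case is that the ratio bounds $r_1\le g_3/g_1\le g_3/g_2<r_2$ force the three coordinates to be comparable up to constants depending only on $r_1$ and $r_2$, so the single product bound $g_1g_2g_3<N$ forces each coordinate to be $O(N^{1/3})$; a two-dimensional projection therefore has area $O(N^{2/3})$, while the projections onto the coordinate axes are $O(N^{1/3})$ and contribute only to lower order. Concretely, for the projection onto the $(g_1,g_2)$-plane one checks that the existence of an admissible $g_3$ forces $(r_1/r_2)g_1<g_2\le g_1$ together with $g_1^2g_2<N/r_1$, whence $g_1^3=O(N)$ and so $g_1,g_2=O(N^{1/3})$; the other two planes are handled symmetrically.

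I expect the bookkeeping of these shadow estimates over the three coordinate planes, rather than the volume computation, to be the only mildly delicate part: one must verify in each case that the chain of ratio inequalities really does pin all coordinates to size $O(N^{1/3})$ (so that the relevant constants depend only on $r_1,r_2$ and are uniform in $N$), after which the area bound $O(N^{2/3})$ is immediate and dominates the axis projections.
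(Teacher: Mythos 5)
Your proposal is correct and follows essentially the same route as the paper: reduce to the positive octant, make a multiplicative change of variables in which the ratio constraints become $r_1\le u\le v<r_2$ and the size variable decouples (the paper uses $x_3=(g_1g_2)^3$ where you use $w=g_3$, an immaterial difference), and reduce the volume to the double integral $\frac{N}{3}\int\!\!\int\frac{du\,dv}{uv}$ over the triangle. Your shadow bound is also the paper's: the ratio inequalities combined with $g_1g_2g_3<N$ pin each coordinate to $O\!\left(N^{1/3}\right)$ with constants depending only on $r_1,r_2$, so every lower-dimensional projection lies in a box of side $O\!\left(N^{1/3}\right)$.
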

\begin{proof}
	First note that the volume of $\mc{R}$ is $8$ times that of $\mc{R}^0$ and the measures of the shadows are at most $4$ times those of $\mc{R}^0$. We therefore consider $\mc{R}^0$. We make the change of variables
	\begin{align*}
		x_1&=\frac{g_3}{g_1},\\[5pt]
		x_2&=\frac{g_3}{g_2},\\[5pt]
		x_3&=(g_1g_2)^3.
	\end{align*}
	The Jacobian determinant of this change of variables is
	\[
		\begin{vmatrix}
			-g_3g_1^{-2}&0&g_1^{-1}\\
			0&-g_3g_2^{-2}&g_2^{-1}\\
			3g_1^2g_2^3&3g_1^3g_2^2&0
		\end{vmatrix}
		=6g_1g_2g_3=6\sqrt{x_1x_2x_3}.
	\]
	Therefore,
	\begin{align*}
		\int_{\mc{R}(N,r_1,r_2)}dg_1dg_2dg_3	&=\int_{r_1}^{r_2}\int_{r_1}^{x_2}\int_0^{N^2/x_1x_2}\dfrac{1}{6\sqrt{x_1x_2x_3}}dx_3dx_1dx_2\\
											&=2\cdot\frac{1}{6}\int_{r_1}^{r_2}\int_{r_1}^{x_2}\frac{1}{\sqrt{x_1x_2}}\cdot\frac{N}{\sqrt{x_1x_2}}dx_1dx_2\\
											&=\frac{N}{3}\int_{r_1}^{r_2}\int_{r_1}^{x_2}\frac{1}{x_1x_2}dx_1dx_2.
	\end{align*}
	This latter integral is just like the one in \eqref{eqn:WoCintegralFirst}, yielding the claimed value.

	To bound the measures of the shadows, we will simply show that $g_i=O(N^{1/3})$ for $i=1,2,3$; the shadows will then be contained inside boxes of side $O(N^{1/3})$ of dimension at most $2$. Note that $r_1g_1\leq g_3$ and $g_3/r_2\leq g_2$. Thus,
	\begin{align*}
		N	&>g_1g_2g_3\\
			&\geq\frac{1}{r_2}g_1g_3^2\\
			&\geq\frac{r_1^2}{r_2}g_1^3,
	\end{align*}
	so that
	\[
		g_1<\left(\frac{r_2}{r_1^2}\right)^{1/3}N^{1/3},
	\]
	as desired. Proceeding similarly, we obtain
	\[
		g_2<\left(\frac{r_1}{r_2^2}\right)^{1/3}N^{1/3}\quad\text{and}\quad g_3<r_2^{2/3}N^{1/3}.
	\]
\end{proof}

For a subset $\mc{L}\subseteq\ZZ^3$, let
\[
\mc{R}_{\mc{L}}(N,r_1,r_2):=\mc{L}\cap\mc{R}(N,r_1,r_2).
\]
Applying the Principle of Lipschitz, we get the following count of all lattice points in the above region.
\begin{corollary}
	For $N,r_1,r_2>0$ with $r_1<r_2$,
	\[
		\#\mc{R}_{\ZZ^3}(N,r_1,r_2)=\frac{4N}{3}\left(\log(r_2)-\log(r_1)\right)^2+O(N^{2/3}).
	\]
\end{corollary}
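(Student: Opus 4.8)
The plan is to read this off directly from the Principle of Lipschitz, since the preceding lemma has already produced the two ingredients it consumes: the volume of $\mc{R}(N,r_1,r_2)$ and a bound on the measures of its lower-dimensional shadows. First I would recall the shape of that principle (cf.\ \cite[Lemma~9]{ManulQuartic}): for a suitably regular bounded region in $\RR^3$, the number of points of $\ZZ^3$ it contains equals its volume, with an error that is $O$ of the largest of the (at most) two-dimensional measures of its orthogonal projections onto the coordinate hyperplanes. Granting the regularity, the result then follows by substituting $\mathrm{vol}(\mc{R}(N,r_1,r_2))=\tfrac{4N}{3}(\log(r_2)-\log(r_1))^2$ together with the shadow bound $O(N^{2/3})$, and observing that the one-dimensional projections contribute only $O(N^{1/3})$, so the overall error is $O(N^{2/3})$.

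The step requiring actual care is verifying the regularity hypotheses, and the cleanest way to do this is to work in one coordinate octant at a time and sum the eight resulting estimates (the region lives in $\RR^{\times3}$, so no lattice points with a vanishing coordinate are ever counted). Within a fixed octant, $\mc{R}(N,r_1,r_2)$ is cut out by the boundedly many inequalities $|g_1g_2g_3|<N$, $r_1\le|g_3/g_1|$, $|g_3/g_2|<r_2$, and $|g_3/g_1|\le|g_3/g_2|$; clearing denominators turns these into polynomial inequalities of degree bounded independently of $N$, so the boundary is a finite union of pieces of algebraic hypersurfaces of bounded degree, each coverable by boundedly many Lipschitz-parametrized images of $[0,1]^2$. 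Moreover the bounds $g_1<(r_2/r_1^2)^{1/3}N^{1/3}$, $g_2<(r_1/r_2^2)^{1/3}N^{1/3}$, $g_3<r_2^{2/3}N^{1/3}$ from the preceding lemma confine each octant piece to a box of side $O(N^{1/3})$, so the region is bounded; and fixing two of the variables, each constraint bounds the remaining one to an interval, so any line parallel to a coordinate axis meets the octant piece in a single interval. These are exactly the conditions under which the Principle of Lipschitz applies with a constant depending only on $r_1,r_2$.

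Assembling the eight octant estimates, the count differs from $\mathrm{vol}(\mc{R}(N,r_1,r_2))$ by at most a constant times the largest projection measure, which is $O(N^{2/3})$, yielding
\[
	\#\mc{R}_{\ZZ^3}(N,r_1,r_2)=\frac{4N}{3}\left(\log(r_2)-\log(r_1)\right)^2+O(N^{2/3}).
\]
I do not anticipate a genuine obstacle: the only content beyond the cited inputs is the regularity check, and that is routine precisely because every defining condition is algebraic of bounded degree and the region is cut into finitely many pieces meeting axis-parallel lines in boundedly many intervals. The one bookkeeping point worth stating explicitly is that the implied constant in the error depends on $r_1$ and $r_2$ (through the octant bounds above), which is harmless since these are held fixed as $N\to\infty$.
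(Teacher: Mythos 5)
Your proposal is correct and takes essentially the same approach as the paper: the corollary is read off by applying the Principle of Lipschitz to the two inputs of the preceding lemma, namely the volume $\frac{4N}{3}(\log(r_2)-\log(r_1))^2$ and the $O(N^{2/3})$ bound on the coordinate-hyperplane shadows. Your octant-by-octant regularity verification (bounded semi-algebraic region, bounded number of polynomial inequalities of bounded degree, axis-parallel lines meeting each piece in an interval) just makes explicit what the paper delegates to the cited precise statement of that principle.
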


We now generalize this result to include finitely many congruences conditions.
\begin{definition}
	Let $n\in\ZZ_{\geq1}$.
	\begin{enumerate}
		\item We say that two integers $a$ and $b$ are \textit{congruent modulo }$n(\infty)$ if they are congruent modulo $n$ and have the same sign.
		\item By a \textit{set of congruence conditions modulo }$n(\infty)$, we mean a subset $\mc{C}$ of $\left(\{\pm\}\times\ZZ/n\ZZ\right)^3$.
		\item We will say that $(g_1,g_2,g_3)\in\ZZ^3$ is \textit{in} $\mc{C}$ if
		\[
			((\sgn g_1,g_1+n\ZZ),(\sgn g_2,g_2+n\ZZ),(\sgn g_3,g_3+n\ZZ))\in\mc{C}.
		\]
	\end{enumerate}
\end{definition}
Let $\mc{C}$ be a set of congruence conditions modulo $n(\infty)$. We will be interested in sets of the form
\[
	\mc{L}_\mc{C}=\left\{(g_1,g_2,g_3)\in\ZZ^3:(g_1,g_2,g_3)\text{ is \textit{not} in }\mc{C}\right\}.
\]
By the Chinese Remainder Theorem, we may split up the congruence conditions into prime powers. For each prime number $p$, let $v_p(n)$ denote the biggest power of $p$ dividing $n$. Given $\mc{C}$, there is a set of congruence conditions $\mc{C}_p\subseteq(\ZZ/p^{v_p(n)}\ZZ)^3$ and a $\mc{C}_\infty\subseteq\{\pm\}^3$ such that
\[
	(g_1,g_2,g_3)\text{ is in }\mc{C}\quad\text{if and only if}\quad(g_1,g_2,g_3)\text{ is in }\mc{C}_p\text{ for all }p\leq\infty.\footnote{For $p=\infty$, we mean that the signs of $g_1,g_2,g_3$ are in $\mc{C}_\infty$.}
\]
For a prime $p$, we call the \textit{$p$-adic density of $\mc{L}_{\mc{C}}$} the rational number
\[
	\delta_p(\mc{L}_\mc{C}):=1-\frac{\#\mc{C}_p}{p^{3v_p(n))}}.
\]
For $p=\infty$, let
\[
	\delta_\infty(\mc{L}_\mc{C}):=1-\frac{\#\mc{C}_\infty}{2^3}.
\]
\begin{proposition}\label{prop:arbitrarycongruenceconditions}
	Fix a set of congruence conditions $\mc{C}$ modulo $n(\infty)$. Then,
	\[
		\mc{R}_{\mc{L}_\mc{C}}(N,r_1,r_2)=\frac{4}{3}\left(\prod_{p\leq\infty}\delta_p(\mc{L}_\mc{C})\right)N\left(\log(r_2)-\log(r_1)\right)^2+O(N^{2/3}).
	\]
\end{proposition}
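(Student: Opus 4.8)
The plan is to decompose the count over $\mc{L}_\mc{C}$ into a finite sum of counts over single residue-and-sign classes, each handled exactly as in the Corollary preceding this Proposition, via the Principle of Lipschitz together with the volume and shadow estimates of the preceding Lemma. By construction $\mc{L}_\mc{C}$ consists of those triples whose reduction avoids the forbidden set at every place, so it is a disjoint union of classes of the form $(\vec a+n\ZZ^3)$ intersected with a fixed open sign-octant, indexed by the elements of $(\{\pm\}\times\ZZ/n\ZZ)^3$ that avoid every $\mc{C}_p$ and $\mc{C}_\infty$. Since the local conditions at distinct places are independent, the Chinese Remainder Theorem identifies this index set with a product, so the number of admissible classes is $(8-\#\mc{C}_\infty)\prod_{p\mid n}\bigl(p^{3v_p(n)}-\#\mc{C}_p\bigr)$.

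For a single admissible class I would count the points of $\vec a+n\ZZ^3$ lying in the prescribed octant and in $\mc{R}(N,r_1,r_2)$. The key structural feature is that the ratio conditions $r_1\le|g_3/g_1|\le|g_3/g_2|<r_2$ cutting out $\mc{R}$ are invariant under the scaling $g\mapsto g/n$, while the size condition $|g_1g_2g_3|<N$ becomes $|g_1g_2g_3|<N/n^3$; thus this scaling carries $\vec a+n\ZZ^3$ to a translate of $\ZZ^3$ and carries $\mc{R}(N,r_1,r_2)$ to $\mc{R}(N/n^3,r_1,r_2)$. Applying the Principle of Lipschitz (as in \cite[Lemma~9]{ManulQuartic}) to this translate of $\ZZ^3$ intersected with the single octant, and using that $\mc{R}$ is symmetric under all sign changes so that one octant carries exactly $\tfrac18$ of the volume computed in the preceding Lemma, yields the main term $\tfrac18\cdot\tfrac43(N/n^3)(\log r_2-\log r_1)^2=\tfrac{N}{6n^3}(\log r_2-\log r_1)^2$, with an error of the order of the region's shadows, which remain $O(N^{2/3})$ after the scaling.

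Summing over the admissible classes, whose number is a constant depending only on $n$ and $\mc{C}$, the errors accumulate to $O(N^{2/3})$ and the main terms combine to
\[
	(8-\#\mc{C}_\infty)\prod_{p\mid n}\bigl(p^{3v_p(n)}-\#\mc{C}_p\bigr)\cdot\frac{N}{6n^3}\bigl(\log(r_2)-\log(r_1)\bigr)^2.
\]
Writing $n^3=\prod_{p\mid n}p^{3v_p(n)}$, the finite-prime factors become $\prod_{p\mid n}\bigl(1-\#\mc{C}_p/p^{3v_p(n)}\bigr)=\prod_{p\mid n}\delta_p(\mc{L}_\mc{C})$, the sign factor is $8\,\delta_\infty(\mc{L}_\mc{C})$, the primes $p\nmid n$ contribute $\delta_p=1$, and $\tfrac{8}{6}=\tfrac43$; this collapses the expression to exactly $\frac43\bigl(\prod_{p\le\infty}\delta_p(\mc{L}_\mc{C})\bigr)N(\log(r_2)-\log(r_1))^2$, as claimed.

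The main obstacle is the bookkeeping in the second step: one must correctly track the covolume factor $1/n^3$ produced by the scaling and verify that translating the lattice and restricting to a single octant leave the Lipschitz error at $O(N^{2/3})$, with the implied constant allowed to depend on the fixed data $n$, $\mc{C}$, $r_1$, and $r_2$. Once these points are secured, the remainder is Chinese-Remainder-Theorem bookkeeping together with the elementary algebraic simplification above.
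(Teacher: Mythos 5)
Your proof is correct and follows essentially the same route as the paper: decompose $\mc{L}_\mc{C}$ into translates of $n\ZZ^3$ restricted to sign-octants, apply the Principle of Lipschitz to each class (picking up the covolume factor $\tfrac{4}{3}(2n)^{-3}N=\tfrac{N}{6n^3}$ per class with $O(N^{2/3})$ error), and sum over the admissible classes. Your explicit CRT count of the admissible classes and the final density simplification are exactly the bookkeeping the paper compresses into its last display (which, as written there, even contains a typo — $\#\C^c\cdot(2n)^3$ should be $\#\C^c/(2n)^3$ — that your version gets right).
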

\begin{proof}
For $\ul{m}=((\sigma_1,m_1),(\sigma_2,m_2),(\sigma_3,m_3))\in\left(\{\pm\}\times\ZZ/n\ZZ\right)^3$, let
\[
	\L_{\ul{m}}=((m_1,m_2,m_3)+n\ZZ^3)\cap\RR^3_{\sigma_1,\sigma_2,\sigma_3},
\]
where $\RR^3_{\sigma_1,\sigma_2,\sigma_3}$ denotes the octant in $\RR^3$ given by the signs $(\sigma_1,\sigma_2,\sigma_3)$.
We can write
	\[
		\mc{L}_\mc{C}=\bigcup_{\ul{m}\notin\C}\L_{\ul{m}},
	\]
	i.e.\ $\L$ is a union of translates of scalings of $\ZZ^3$ (with certain restrictions to octants). The Principle of Lipschitz applies to each $\mc{L}_{\ul{m}}$ though we must scale by $n$. We obtain that	\[
		\#\mc{R}_{\L_{\ul{m}}}(N,r_1,r_2)=\frac{4}{3}(2n)^{-3}N\left(\log(r_2)-\log(r_1)\right)^2+O(N^{2/3}).
	\]
	Summing over $\ul{m}\notin\C$ yields the desired result since
	\[
		\#\C^c\cdot (2n)^3=\prod_{p\leq\infty}\delta_p(\L_\C),
	\]
	where $\C^c$ denotes the complement of $\C$.
\end{proof}

In order to count strongly carefree triples, we must impose the following \textit{infinitely many} congruence conditions: for all primes $p$,

\penalty-100
\begin{itemize}
	\item there is no $i$ such that $g_i\equiv0\mod{p^2}$ (squarefree),
	\item if $g_i\equiv0\mod{p}$, then there is no $j\neq i$ such that $g_j\equiv0\mod{p}$ (pairwise relatively prime).
\end{itemize}
Accordingly, define the congruence condition $\C_p^{\sf}$ modulo $p^2$ by
\[
	\C_p^{\sf}:=\left\{(g_1,g_2,g_3)\in(\ZZ/p^2\ZZ)^3:\text{at least two of the }g_i\text{ are 0 modulo }p\right\}.
\]
A strongly carefree triple is $\ast$-strongly carefree if and only if at most $1$ of the numbers is negative, so we define $\C_\infty^{\sf}$ to be the triples of signs at least two of which are negative. The following lemma will be needed in applying a sieve below.
\begin{lemma}\label{lem:sizeofCp}
	For a prime $p$,
	\[
		\#\C_p^{\sf}=6p^4-8p^3+3p^2.
	\]
\end{lemma}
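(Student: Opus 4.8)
The plan is to count the complement of $\C_p^{\sf}$ inside $(\ZZ/p^2\ZZ)^3$ and subtract from the total $\#(\ZZ/p^2\ZZ)^3=p^6$. The set $\C_p^{\sf}$ collects exactly the triples at which one of the two local conditions defining a strongly carefree triple fails at $p$: either some coordinate is divisible by $p^2$ (squarefreeness fails) or at least two coordinates are divisible by $p$ (pairwise coprimality fails). Its complement is the set of triples that are simultaneously squarefree and pairwise coprime at $p$, which is far more convenient to enumerate directly.

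First I would stratify each coordinate $g_i\in\ZZ/p^2\ZZ$ by its $p$-adic valuation into three types: the \emph{unit} type with $p\nmid g_i$, of which there are $p^2-p$; the \emph{simple} type with $p\mid g_i$ but $g_i\not\equiv0\mod{p^2}$, of which there are $p-1$; and the single residue $g_i\equiv0\mod{p^2}$. These total $p^2$, as they must. A triple lies \emph{outside} $\C_p^{\sf}$ exactly when no coordinate is of the last type and at most one coordinate is divisible by $p$. Hence the admissible configurations are precisely: all three coordinates of unit type, or exactly one coordinate of simple type with the other two of unit type. This gives
\[
	\#\left((\ZZ/p^2\ZZ)^3\setminus\C_p^{\sf}\right)=(p^2-p)^3+3(p-1)(p^2-p)^2.
\]
Factoring out $(p^2-p)^2=p^2(p-1)^2$ and simplifying the bracket $(p^2-p)+3(p-1)=p^2+2p-3=(p-1)(p+3)$ yields $p^2(p-1)^3(p+3)$, which expands to $p^6-6p^4+8p^3-3p^2$. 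Subtracting from $p^6$ then gives $\#\C_p^{\sf}=6p^4-8p^3+3p^2$, as claimed.

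There is no serious obstacle here: the argument is elementary counting once residues are stratified by valuation. The only point demanding care is the correct translation of the two defining conditions into the list of admissible configurations; in particular, passing to the complement automatically sidesteps the inclusion--exclusion overlaps one would otherwise confront in counting the ``bad'' set directly, where the squarefreeness and coprimality failures intersect. As a consistency check, the resulting $p$-adic density $1-\#\C_p^{\sf}/p^6=(1-p^{-1})^3(1+3p^{-1})=1-6p^{-2}+8p^{-3}-3p^{-4}$ matches the Euler factor appearing in the constants $C_\wild$ and $C_\tame$ of Theorem~\ref{thm:V4equid}, which is reassuring.
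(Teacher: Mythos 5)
Your proof is correct, but it takes a different route from the paper. The paper counts $\C_p^{\sf}$ directly, splitting it into two disjoint pieces: the tuples with at least one coordinate equal to $0$ in $\ZZ/p^2\ZZ$ (counted by inclusion--exclusion over the three coordinate planes, giving $3p^4-3p^2+1$), and the tuples with no coordinate $0$ modulo $p^2$ but at least two coordinates divisible by $p$ (counted by a second inclusion--exclusion over the three sets $\C_{p,k}^{\sf}$, giving $3p^4-8p^3+6p^2-1$); summing yields the claim. You instead count the complement by stratifying each residue by its $p$-adic valuation (unit, simple, zero types) and listing the admissible configurations, which sidesteps inclusion--exclusion entirely; your computation $(p^2-p)^3+3(p-1)(p^2-p)^2=p^2(p-1)^3(p+3)=p^6-6p^4+8p^3-3p^2$ is correct and, as a bonus, hands you the local density in the factored form $(1-p^{-1})^3(1+3p^{-1})$, which the paper never exhibits but which matches the Euler factors in Theorem~\ref{thm:V4equid}. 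One subtlety you handled well: the paper's displayed definition of $\C_p^{\sf}$ literally mentions only the condition that at least two coordinates vanish modulo $p$, which by itself would give the count $3p^4-2p^3$; your reading, which also includes the tuples where some coordinate is divisible by $p^2$ (the failure of squarefreeness listed just above the definition), is the intended one, as both the lemma's value and the paper's own proof confirm.
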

\begin{proof}
	First consider the tuples $(g_1,g_2,g_3)\in(\ZZ/p^2\ZZ)^3$ at least one of whose coordinates is $0$. The three ``coordinate planes'' each have $(p^2)^2$. Each pair of them intersects in a ``coordinate axis'', each having $p^2$ points. The intersection of all three planes is the origin. Therefore, inclusion-exclusion yields that there are $3p^4-3p^2+1$ such tuples.
	
	Now, consider the tuples none of whose coordinates is $0$. For $\{i,j,k\}=\{1,2,3\}$, let
	\[
		\C_{p,k}^{\sf}:=\{(g_1,g_2,g_3)\in(\ZZ/p^2\ZZ)^3:g_i,g_j\equiv0\mod{p},g_i,g_j,g_k\neq0\}.
	\]
	For each of $g_i$ and $g_j$, there are $p-1$ values that are $0\mod{p}$ (but not modulo $p^2$). And for each pair of such values, every $p^2-1$ non-zero value of $g_k$ yields a tuple in $\C_{p,k}^{\sf}$, so that
	\[
		\#\C_{p,k}^{\sf}=(p-1)^2(p^2-1).
	\]
	The intersection of any two $\C_{p,k}^{\sf}$ (or all three) consists of tuples all of whose coordinates are $0\mod{p}$, of which there are $(p-1)^3$. Inclusion-exclusion then says that the number of tuples in $\C_p^{\sf}$ under consideration is
	\[
		3(p-1)^2(p^2-1)-3(p-1)^3+(p-1)^3=3p^4-8p^3+6p^2-1.
	\]
	Combining the two pieces of $\C_p^{\sf}$ yields the result.
\end{proof}

In addition, to being $\ast$-strongly carefree, the triples we are interested in must satisfy certain congruences modulo $4$ that ensure they correspond to conditions $(i),(ii),$ or $(iii)$, respectively. Define the following subsets of $(\ZZ/4\ZZ)^3$:
\begin{align*}
	\C_2^{(i)}&:=(\ZZ/4\ZZ)^3\setminus\{(1,3,2),(3,1,2)\},\\
	\C_2^{(ii)}&:=(\ZZ/4\ZZ)^3\setminus\{(1,1,2),(3,3,2),(1,1,3),(3,3,1)\},\\
	\C_2^{(iii)}&:=(\ZZ/4\ZZ)^3\setminus\{(1,1,1),(3,3,3)\}.
\end{align*}
\begin{lemma}
	For $?=(i),(ii),(iii)$, a $\ast$-strongly carefree triple $(g_1,g_2,g_3)$ is in $\mc{SC}^?$ if and only if it is not in $\C_2^?$.
\end{lemma}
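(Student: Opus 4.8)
The plan is to push everything through the bijection $(g_1,g_2,g_3)\mapsto(D_1,D_2,D_3)=(g_2g_3,g_1g_3,g_1g_2)$ and reduce to a finite comparison of residues modulo $4$. The defining conditions of each $\mc{D}^?$ split into congruence conditions on the $D_i$ modulo $4$ and archimedean (ordering) conditions on the $|D_i|$; since $\C_2^?$ records only residues modulo $4$, the content of the lemma is that the mod-$4$ data match up on the two sides. The ordering conditions $|D_1|\leq|D_2|$ (etc.) are not congruence conditions and are imposed separately by the counting region $\mc{R}(N,r_1,r_2)$ of \S\ref{sec:carefreecounting}, so they play no role here.

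First I would record the two structural facts that drive the computation. Because the $g_i$ are pairwise $\ast$-relatively prime, at most one of them is even; and because each $g_i$ is squarefree, no $g_i$ is $\equiv0\mod{4}$, so each reduces to $1$, $2$, or $3$ modulo $4$. Hence every $D_i$, being a product of two of the $g$'s, has its residue modulo $4$ determined by those two residues: a product of two odd residues lies in $\{1,3\}$, while the product of the (at most one) even residue with an odd one is $\equiv2\mod{4}$. These remain valid when a $g_i$ is negative, as only its residue modulo $4$ enters.

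Then I would run the three cases. In case (iii), $D_1\equiv D_2\equiv D_3\equiv1\mod{4}$ forces all $g_i$ odd, and dividing the congruences pairwise (odd residues are units mod $4$) gives $g_1\equiv g_2\equiv g_3\mod{4}$, so the triple is $\equiv(1,1,1)$ or $(3,3,3)$, exactly the complement of $\C_2^{(iii)}$. In case (i), $D_3=g_1g_2\equiv3$ forces $g_1,g_2$ odd with $\{g_1,g_2\}\equiv\{1,3\}$, and then $D_1=g_2g_3\equiv2$ forces $g_3\equiv2$, giving $(1,3,2)$ or $(3,1,2)$; conversely these two patterns yield the required congruences on the $D_i$. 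Case (ii) has the two sub-families of $\mc{D}^{(ii)}$: the sub-family $D_1\equiv D_2\equiv2,\ D_3\equiv1$ gives $g_3\equiv2$ and $g_1\equiv g_2$ odd, i.e.\ $(1,1,2)$ or $(3,3,2)$; and $D_1\equiv D_2\equiv3,\ D_3\equiv1$ gives all $g_i$ odd with $g_1\equiv g_2$ and $g_3\equiv3g_1\mod{4}$, i.e.\ $(1,1,3)$ or $(3,3,1)$. Together these four are precisely the complement of $\C_2^{(ii)}$.

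I expect no genuine obstacle: the core is a routine finite check modulo $4$. The only point requiring care is conceptual rather than computational, namely cleanly separating the mod-$4$ conditions (which $\C_2^?$ encodes) from the archimedean ordering conditions and from the sign constraint of $\ast$-strong carefreeness (which is bookkept by $\C_\infty^{\sf}$), together with the observation that the congruence characterization survives intact when one coordinate is negative.
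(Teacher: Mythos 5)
Your proof is correct and takes essentially the same route as the paper's: both reduce the statement to a finite residue check modulo $4$, translating the congruence conditions on the $D_i=g_jg_k$ into the listed residue patterns for $(g_1,g_2,g_3)$. The paper's proof only writes out case (i) and declares the other cases similar, so your full three-case verification (together with your explicit observation that the ordering conditions $|D_1|\leq|D_2|$, etc., are imposed by the counting region rather than by this lemma) is, if anything, slightly more complete than the original.
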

\begin{proof}
	For case $(i)$, we must have that $(g_2g_3,g_1g_3,g_1g_2)\equiv(2,2,3)\mod{4}$. Therefore $g_1$ and $g_2$ must be odd and not congruent modulo $4$. This forces $g_3$ to be $2$ modulo $4$, so that $(g_1,g_2,g_3)\equiv(1,3,2)$ or $(3,1,2)\mod{4}$. The other cases are similar.
\end{proof}

Since $\C_2^?\supseteq\C_2^{\sf}$, these conditions at $2$ already take care of the strongly carefree condition with respect to the prime $2$. Accordingly, for $Y\geq2$ and for $?=(i),(ii),(iii)$, let
\[
	n(Y):=\prod_{p\leq Y}p^2,
\]
and let $\C^?_Y$ denote the set of congruence conditions modulo $n(Y)(\infty)$ given by $\C_\infty^{\sf}$, $\C_2^?$, and $\C_p^{\sf}$ for $2<p\leq Y$. Let $\mc{L}^?(Y)=\mc{L}_{\C^?_Y}$. By Lemma~\ref{lem:sizeofCp}, for $2<p\leq Y$,
\[
	\delta_p(\mc{L}^?(Y))=1-6p^{-2}+8p^{-3}-3p^{-4}.
\]
We also have that $\delta_\infty(\mc{L}^?(Y))=1/2$ and $\delta_2(\mc{L}^?(Y))=s_?/32$. Applying Proposition~\ref{prop:arbitrarycongruenceconditions} to $\L^?(Y)$, we obtain the following intermediary result.
\begin{corollary}
	For $0<r_1<r_2$, we have that
	\[
		\mc{R}_{\mc{L}^?(Y)}(N,r_1,r_2)=\frac{s_?}{48}\prod_{2<p\leq Y}\left(1-6p^{-2}+8p^{-3}-3p^{-4}\right)N\left(\log(r_2)-\log(r_1)\right)^2+O(N^{2/3}).
	\]
\end{corollary}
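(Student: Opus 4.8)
The plan is to recognize that this Corollary is a direct specialization of Proposition~\ref{prop:arbitrarycongruenceconditions} to the concrete family of congruence conditions $\C^?_Y$ modulo $n(Y)(\infty)$. No new analytic input is required: all of the heavy lifting (the Principle of Lipschitz, the volume computation of $\mc{R}(N,r_1,r_2)$, and the combinatorics of Lemma~\ref{lem:sizeofCp}) has already been done, so the proof amounts to substituting the three local densities and simplifying a numerical constant.

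First I would note that, by construction, $\mc{L}^?(Y)=\mc{L}_{\C^?_Y}$ is exactly of the form to which Proposition~\ref{prop:arbitrarycongruenceconditions} applies, with $n=n(Y)=\prod_{p\leq Y}p^2$. Under the Chinese Remainder Theorem decomposition underlying that proposition, the local pieces of $\C^?_Y$ are $\C_\infty^{\sf}$ at the infinite place, $\C_2^?$ at the prime $2$, and $\C_p^{\sf}$ at each odd prime $2<p\leq Y$; at every prime $p>Y$ there is no condition, so $\delta_p(\mc{L}^?(Y))=1$ there and those factors drop out of the product. One small point worth flagging is that there is no conflict or double counting at $p=2$: since $\C_2^?\supseteq\C_2^{\sf}$, as remarked just before the statement, the mod-$4$ condition already enforces the strongly carefree condition at $2$, so imposing $\C_2^?$ alone is correct.

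Next I would insert the three local densities recorded immediately above the Corollary. From Lemma~\ref{lem:sizeofCp} together with $v_p(n(Y))=2$, one has $\delta_p(\mc{L}^?(Y))=1-\#\C_p^{\sf}/p^{6}=1-6p^{-2}+8p^{-3}-3p^{-4}$ for odd $2<p\leq Y$; a direct count gives $\delta_\infty(\mc{L}^?(Y))=1-4/2^{3}=\tfrac12$ (four of the eight sign patterns have at least two minus signs) and $\delta_2(\mc{L}^?(Y))=s_?/32$ (there are $2s_?$ admissible residues modulo $4$ among the $2^{6}$ possibilities). Hence $\prod_{p\leq\infty}\delta_p(\mc{L}^?(Y))=\tfrac12\cdot\tfrac{s_?}{32}\cdot\prod_{2<p\leq Y}\bigl(1-6p^{-2}+8p^{-3}-3p^{-4}\bigr)$. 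Feeding this into Proposition~\ref{prop:arbitrarycongruenceconditions} and collapsing the leading constant via $\tfrac43\cdot\tfrac12\cdot\tfrac{s_?}{32}=\tfrac{s_?}{48}$ produces the claimed main term, with the same $O(N^{2/3})$ error inherited verbatim from the proposition.

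The only genuinely required care — the \emph{hard part}, such as it is — is the bookkeeping: verifying that $\C^?_Y$ really does factor as the product of exactly these local conditions (so that the $p$-adic densities multiply as Proposition~\ref{prop:arbitrarycongruenceconditions} demands, with trivial factors at primes exceeding $Y$), and tracking the numerical constant through $\tfrac43\cdot\tfrac12\cdot\tfrac1{32}=\tfrac1{48}$. Everything beyond that is a substitution.
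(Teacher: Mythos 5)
Your proposal is correct and matches the paper's own treatment exactly: the paper records the local densities $\delta_p(\mc{L}^?(Y))=1-6p^{-2}+8p^{-3}-3p^{-4}$ (via Lemma~\ref{lem:sizeofCp}), $\delta_\infty=1/2$, and $\delta_2=s_?/32$, and then simply cites Proposition~\ref{prop:arbitrarycongruenceconditions}. Your additional bookkeeping (the CRT factorization, trivial densities at $p>Y$, the remark that $\C_2^?\supseteq\C_2^{\sf}$ avoids double-imposing the carefree condition at $2$, and the constant $\tfrac43\cdot\tfrac12\cdot\tfrac{s_?}{32}=\tfrac{s_?}{48}$) is exactly the verification the paper leaves implicit.
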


We must now show that we can take the limit as $Y\rightarrow\infty$ above and obtain the same asymptotic. We accomplish this with a sieve adapted from \cite[\S5]{DH}. This method worsens the error to $o(N)$, but that is sufficient for our purposes. Let $\L^?_\infty$ be the set where the congruence conditions modulo all primes are imposed. We have that
\begin{align}
	\limsup_{N\rightarrow\infty}\frac{\#\mc{R}_{\L^?_\infty}(N,r_1,r_2)}{N}&\leq \lim_{Y\rightarrow\infty}\lim_{N\rightarrow\infty}\frac{\#\mc{R}^?_{\L^?(Y)}(N,r_1,r_2)}{N}\nonumber\\
		&\leq\frac{s_?}{48}\prod_{p\text{ odd}}\left(1-6p^{-2}+8p^{-3}-3p^{-4}\right)\left(\log(r_2)-\log(r_1)\right)^2. \label{eqn:sieveupperbound}
\end{align}

Let
\[
	\W^?_p:=\{(g_1,g_2,g_3)\in\ZZ^3:(g_1,g_2,g_3)\text{ is in }\C^?_p\}.
\]
Then
\[
	\mc{R}_{\L^?(Y)}(N,r_1,r_2)\subseteq\mc{R}_{\L^?_\infty}(N,r_1,r_2)\cup\bigcup_{p>Y}\mc{R}_{\W_p}(N,r_1,r_2).
\]
Thus,
\begin{equation}\label{eqn:sievelowerbound}
	\frac{\#\mc{R}_{\L^?_\infty}(N,r_1,r_2)}{N}\geq\frac{\#\mc{R}_{\L^?(Y)}(N,r_1,r_2)}{N}-O\left(\sum_{p>Y}\frac{\#\mc{R}_{\W^?_p}(N,r_1,r_2)}{N}\right).
\end{equation}
By Lemma~\ref{lem:sizeofCp},
\[
	\frac{\#\mc{R}_{\W^?_p}(N,r_1,r_2)}{N}=O(p^{-2}),
\]
so that the sum in the big-oh goes to zero as $Y$ goes to infinity. Taking \eqref{eqn:sieveupperbound} with the liminf of \eqref{eqn:sievelowerbound} as $N\rightarrow\infty$ and taking the limit as $Y$ approaches $\infty$ yields
\[
\#\mc{R}_{\L^?_\infty}(N,r_1,r_2)=\frac{s_?}{48}\prod_{p\text{ odd}}\left(1-6p^{-2}+8p^{-3}-3p^{-4}\right)N\left(\log(r_2)-\log(r_1)\right)^2+o(N).
\]

We now put this all together. Recall from Theorem~\ref{thm:V4TameWild} that the shape of a $V_4$-quartic field $K$ lies in one of two spaces, $\mc{S}_{oC}$ or $\mc{S}_{oI}$, depending on whether $2$ is ramified in $K$ or not. The following result thus breaks up into these two cases.
\begin{theorem}
	Let $0<R_1<R_2$.
	\begin{enumerate}
		\item The number of $V_4$-quartic fields $K$ in which $2$ is ramified, $\Delta_K<X$, and $\sh(K)\in W_{oC}(R_1,R_2)$ is
		\[
			\frac{5}{48}\prod_{p\text{ odd}}\left(1-6p^{-2}+8p^{-3}-3p^{-4}\right)X^{1/2}\mu_{oC}(W_{oC}(R_1,R_2))+o(X^{1/2}).
		\]
		\item Assume further that $R_2<1$. The number of $V_4$-quartic fields $K$ in which $2$ is unramified, $\Delta_K<X$, and $\sh(K)\in W_{oI}(R_1,R_2)$ is
		\[
			\frac{1}{6}\prod_{p\text{ odd}}\left(1-6p^{-2}+8p^{-3}-3p^{-4}\right)X^{1/2}\mu_{oI}(W_{oI}(R_1,R_2))+o(X^{1/2}).
		\]
	\end{enumerate}
\end{theorem}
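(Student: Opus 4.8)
The plan is to assemble the pieces already in place rather than prove anything genuinely new. By Proposition~\ref{prop:bijectionsKDSC}, counting the fields in question is the same as counting $\ast$-strongly carefree triples: the wild fields with $\sh(K)\in W_{oC}(R_1,R_2)$ correspond to $\mc{SC}^{(i)}(X_{(i)},R_1,R_2)$ together with $\mc{SC}^{(ii)}(X_{(ii)},R_1,R_2)$, while the tame fields with $\sh(K)\in W_{oI}(R_1,R_2)$ correspond to $\mc{SC}^{(iii)}(X_{(iii)},R_1,R_2)$. So I would first observe that each $\mc{SC}^?(X_?,R_1,R_2)$ is exactly the lattice-point set $\mc{R}_{\mc{L}^?_\infty}(N,r_1,r_2)$ to which the sieve applies: the conditions cutting out $\mc{L}^?_\infty$ (the sign condition $\C^{\sf}_\infty$, the case condition $\C_2^?$ at $2$, and the squarefree/coprimality conditions $\C_p^{\sf}$ at odd $p$) are precisely those defining $\mc{SC}^?$.

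Next I would pin down the change of parameters. The bound $(g_1g_2g_3)^2<X_?$ becomes $|g_1g_2g_3|<N$ with $N=\sqrt{X_?}$, and the shape box translates the conditions $R_1^2\leq s_?^2|g_3/g_1|\leq s_?^2|g_3/g_2|<R_2^2$ into $r_1\leq|g_3/g_1|\leq|g_3/g_2|<r_2$ with $r_i=R_i^2/s_?^2$. The key simplification is that $s_?$ disappears from the logarithmic factor: $\log r_2-\log r_1=\log R_2^2-\log R_1^2=2(\log R_2-\log R_1)$ regardless of $s_?$, so $(\log r_2-\log r_1)^2=4(\log R_2-\log R_1)^2=8\,\mu(W(R_1,R_2))$ by Lemma~\ref{lem:WoCmeasure} and its $oI$ analogue. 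Since $X_?=X/c_?$ for a constant $c_?$, the error $o(N)$ is $o(X^{1/2})$.

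Then I would feed these parameters into the asymptotic for $\#\mc{R}_{\mc{L}^?_\infty}(N,r_1,r_2)$ established just above the theorem, whose leading constant is $\frac{s_?}{48}\prod_{p\text{ odd}}(1-6p^{-2}+8p^{-3}-3p^{-4})$. For the tame case this is immediate with $s_{(iii)}=1$ and $N=X^{1/2}$, giving the coefficient $\frac{1}{48}\cdot 8=\frac{1}{6}$ of $X^{1/2}\mu_{oI}(W_{oI}(R_1,R_2))\prod_{p\text{ odd}}(\cdots)$. For the wild case I would add the two contributions: case (i) has $s_{(i)}=1$ and $N=X^{1/2}/8$, contributing $\frac{1}{96}X^{1/2}(\log R_2-\log R_1)^2\prod(\cdots)$, while case (ii) has $s_{(ii)}=2$ and $N=X^{1/2}/4$, contributing $\frac{1}{24}X^{1/2}(\log R_2-\log R_1)^2\prod(\cdots)$; their sum has coefficient $\frac{1}{96}+\frac{1}{24}=\frac{5}{96}$, which becomes $\frac{5}{48}$ once $(\log R_2-\log R_1)^2=2\,\mu_{oC}(W_{oC}(R_1,R_2))$ is substituted.

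Since all the analytic work (the Principle of Lipschitz and the sieve) is already done, there is no real obstacle beyond careful bookkeeping. The one point demanding attention is the interplay of the three normalizations: the powers of $2$ in $X_?$, the scaling factors $s_?$, and the doubling of logarithms coming from the squares $R_i^2$. Verifying that these combine to produce exactly $\frac{5}{48}$ in the wild case (after summing cases (i) and (ii)) and $\frac{1}{6}$ in the tame case is precisely where a stray power of two or a dropped factor would most easily creep in.
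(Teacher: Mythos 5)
Your proposal is correct and follows essentially the same route as the paper: reduce via Proposition~\ref{prop:bijectionsKDSC} to the identification $\mc{SC}^?(X_?,R_1,R_2)=\mc{R}_{\L^?_\infty}\bigl(X_?^{1/2},(R_1/s_?)^2,(R_2/s_?)^2\bigr)$, apply the sieved Lipschitz asymptotic with leading constant $\frac{s_?}{48}\prod_{p\text{ odd}}(1-6p^{-2}+8p^{-3}-3p^{-4})$, and sum cases (i) and (ii) for the wild count. Your bookkeeping of the three normalizations (the powers of $2$ in $X_?$, the cancellation of $s_?$ inside the logarithms, and the conversion $(\log R_2-\log R_1)^2=2\mu(W(R_1,R_2))$) reproduces exactly the paper's intermediate constants $\frac{1}{96}$, $\frac{1}{24}$, $\frac{1}{12}$ and final constants $\frac{5}{48}$ and $\frac{1}{6}$.
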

\begin{proof}
	For $?=(i),(ii),(ii)$, we have that
	\[
		\mc{SC}^?(X_?,R_1,R_2)=\mc{R}_{\L^?_\infty}\left(X_?^{1/2},\left(\frac{R_1}{s_?}\right)^2,\left(\frac{R_2}{s_?}\right)^2\right).
	\]
	Let us first deal with the wild case. Proposition~\ref{prop:bijectionsKDSC} tells use that the number we seek is
	\[
		\mc{SC}^{(i)}(X_{(i)},R_1,R_2)+\mc{SC}^{(ii)}(X_{(ii)},R_1,R_2).
	\]
	We have that
	\[
		\mc{R}_{\L^{(i)}_\infty}\left(\frac{X^{1/2}}{2^3},R_1^2,R_2^2\right)=\frac{1}{96}\prod_{p\text{ odd}}\left(1-6p^{-2}+8p^{-3}-3p^{-4}\right)X^{1/2}\left(\log(R_2)-\log(R_1)\right)^2+o(X^{1/2})
	\]
	and
	\[
		\mc{R}_{\L^{(ii)}_\infty}\left(\frac{X^{1/2}}{2^2},\frac{1}{4}R_1^2,\frac{1}{4}R_2^2\right)=\frac{1}{24}\prod_{p\text{ odd}}\left(1-6p^{-2}+8p^{-3}-3p^{-4}\right)X^{1/2}\left(\log(R_2)-\log(R_1)\right)^2+o(X^{1/2}).
	\]
	
	In the tame case, we have that
	\[
		\mc{R}_{\L^{(iii)}_\infty}\left(X^{1/2},R_1^2,R_2^2\right)=\frac{1}{12}\prod_{p\text{ odd}}\left(1-6p^{-2}+8p^{-3}-3p^{-4}\right)X^{1/2}\left(\log(R_2)-\log(R_1)\right)^2+o(X^{1/2})
	\]
\end{proof}

By Lemma~\ref{lem:WoCsuffice} and its analogue for $\mc{S}_{oI}$, this proves Theorem~\ref{thm:V4equid}.


\section{The shapes of $C_4$-quartic fields}\label{sec:C4fields_description}

Let us begin by stating the main theorem of this section. To this end, we first note that the discriminant of every $C_4$-quartic field $K$ is of the form $2^eA^2D^3$ with $A$ odd and squarefree, and $D$ relatively prime to $A$ and squarefree. Then, $D$ is the product of all primes that ramify in the unique quadratic subfield $K_2$ of $K$ and $A$ is the product of all odd primes that ramify in $K$, but not in $K_2$; we take $A<0$ when $K$ is not totally real. Let $\mc{N}=\mc{N}_K$ denote the absolute norm of the relative discriminant of $K/K_2$ and let $\Delta_2$ denote the discriminant of $K_2$. 

The goal of this section is to prove the following.
\begin{theorem}\label{thm:C4_main_theorem}
	The shapes of $C_4$-quartic fields $K$ come in two families depending on whether or not $K$ is wildly ramified (i.e.\ whether or not $2$ is ramified in $K$).
	\begin{enumerate}
		\item If $2$ is unramified in $K$, then the combinatorial type of the shape of $K$ is a truncated octahedron. Specifically, the shape is a body-centered tetragonal lattice ($tI$) whose side ratio is ${\left(\dfrac{|\Delta_2|}{\mc{N}}\right)^{1/4}\leq1}$. When this ratio is $1$, this is a body-centered cubic lattice ($cI$), and this occurs if and only if $\Delta_K$ is a cube, i.e.\ if and only if no new primes ramify in $K/K_2$.
		\item If $2$ ramifies in $K$, then the combinatorial type of the shape of $K$ is a cuboid. Specifically, the shape is a primitive tetragonal lattice ($tP$) whose side ratio is $\left(\dfrac{4|\Delta_2|}{\mc{N}}\right)^{1/4}\leq1$. The shape is a primitive cubic lattice ($cP$) if and only if $\Delta_K=2^{11}\delta$, where $\delta$ is an odd cube, i.e.\ if and only if $2$ ramifies in $K_2$ and no new primes ramify in $K/K_2$.
	\end{enumerate}
\end{theorem}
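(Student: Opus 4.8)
The plan is to combine a representation-theoretic argument that forces the shape to be tetragonal with two clean global invariants---a vertical length and a covolume---which, together with the centering of the lattice, pin down the exact side ratio. First I would exploit the order-$4$ generator $\sigma$ of $\Gal(K/\QQ)$: it preserves $\mc{O}_K$, hence $\Lambda_K$, fixes $j(1)$, and acts isometrically on $K_\RR$ because Galois conjugation permutes the complex embeddings. Thus $\sigma$ descends to an order-$4$ isometry of the shape lattice $\Lambda_K^\perp$. Decomposing $K_\RR$ into $\sigma$-isotypic components gives $K_\RR=\RR j(1)\oplus L\oplus P$, where $L=\RR j(\sqrt{m})$ (with $K_2=\QQ(\sqrt{m})$) is the line on which $\sigma$ acts by $-1$ and $P$ is the plane on which $\sigma$ acts by a $90^\circ$ rotation. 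As the inner product is $\Gal(K/\QQ)$-invariant, these summands are mutually orthogonal, so $\Lambda_K^\perp\subseteq L\perp P$; the $4$-fold symmetry forces the projection of $\Lambda_K^\perp$ onto $P$ to be a square lattice, whence the shape is tetragonal, with cubic shapes as the degenerate case.

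Next I would compute two invariants in the normalization of $\Lambda_K^\perp$. The image of $\mc{O}_{K_2}$ under $j$ followed by orthogonal projection onto $L$ is a rank-one lattice, and a short computation---done according to $m$ modulo $4$, but yielding the same value each time---shows that its shortest vector has squared length $c^2=|\Delta_2|$; this is the vertical edge of the tetragonal prism. For the covolume, projecting $\Lambda_K$ off the primitive vector $j(1)$ of length $2$ gives $\mathrm{covol}(\Lambda_K^\perp)=\sqrt{|\Delta_K|}/2$, which the tower formula $|\Delta_K|=\Delta_2^2\,\mc{N}$ rewrites as $|\Delta_2|\sqrt{\mc{N}}/2$. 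Strikingly, both $c$ and the covolume are insensitive to whether $2$ ramifies, so the entire tame--wild dichotomy is concentrated in the \emph{centering} of the lattice.

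This is the crux: I must show the lattice is body-centered tetragonal ($tI$) in the tame case and primitive tetragonal ($tP$) in the wild case. This amounts to deciding whether $\Lambda_K^\perp$ contains the body-center vector (vertical component $c/2$ together with the centre of a base square), which I would settle by a $2$-adic analysis of $\mc{O}_K$---equivalently, by writing down an explicit integral basis at $2$ (the ring of integers of a cyclic quartic field being classical) and checking whether the requisite half-element lies in $\mc{O}_K$. Granting the centering, the side ratio falls out of the two invariants: solving $\tfrac12 c a^2=|\Delta_2|\sqrt{\mc{N}}/2$ in the tame case yields $c/a=(|\Delta_2|/\mc{N})^{1/4}=\rrat_K^{-1/4}$, while solving $c a^2=|\Delta_2|\sqrt{\mc{N}}/2$ in the wild case yields $c/a=\sqrt{2}\,(|\Delta_2|/\mc{N})^{1/4}=\sqrt{2}\,\rrat_K^{-1/4}$, the extra $\sqrt{2}$ being exactly the signature of $2$ ramifying.

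Finally, the combinatorial types and cubic specializations are bookkeeping. Since $\rrat_K\geq1$ we get $c/a\leq1<\sqrt{2}$ in the tame case, placing its lattice in the body-centered family whose Voronoi cell is the truncated octahedron (Figure~\ref{fig:conorms_tI}), while a primitive tetragonal lattice always has a cuboid Voronoi cell (Figure~\ref{fig:conorms_tP}). The cubic shapes occur exactly when $c/a=1$, i.e.\ $\mc{N}=|\Delta_2|$ (tame) or $\mc{N}=4|\Delta_2|$ (wild); translating through $|\Delta_K|=\Delta_2^2\,\mc{N}$ and $\rrat_K=(2^eA)^2$ recovers the stated conditions on $\Delta_K$. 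The centering determination of paragraph three is, I expect, the only genuine difficulty, being the one place where the fine behaviour of $2$ in $\mc{O}_K$ must be confronted directly.
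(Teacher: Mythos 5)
Your outline is attractive and much of it is genuinely correct: the order-$4$ Galois action does give an isometry of $\Lambda_K^\perp$ acting as $-1$ on $L=\RR j(\sqrt{D})$ and as a $90^\circ$ rotation on the plane $P$, and the crystallographic classification then places the shape in the tetragonal family ($tP$ or $tI$, with cubic degenerations); your two invariants also check out against the paper's explicit computations ($c^2=|\Delta_2|$ and $\mathrm{covol}(\Lambda_K^\perp)=\sqrt{|\Delta_K|}/2=|\Delta_2|\sqrt{\mc{N}}/2$), and granting the centering, your algebra recovers exactly the stated side ratios. This is a genuinely different organization from the paper, which never invokes the symmetry classification: it instead writes down explicit obtuse superbases of $\mc{O}_K^\perp$ in five cases from the Hudson--Williams and Spearman--Williams integral bases (Lemma~\ref{lem:C4_bases}, Lemma~\ref{lem:C4_Gamma}, Propositions~\ref{prop:C4_case_i}, \ref{prop:C4_case_ii}, \ref{prop:C4_case_iii}) and reads off the conorm diagrams.

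However, there is a genuine gap, and it sits exactly where you say the crux is: you never prove that the lattice is body-centered when $2$ is unramified and primitive when $2$ ramifies. ``I would settle this by a $2$-adic analysis\dots checking whether the requisite half-element lies in $\mc{O}_K$'' is a statement of intent, not an argument; that check \emph{is} the theorem, since the family dichotomy, the extra $\sqrt{2}$ in the wild side ratio, and the bounds $\leq1$ all hinge on it. Carrying it out requires exactly the explicit structure of $\mc{O}_K$ at $2$ in the five cases, which is the content of the paper's Lemma~\ref{lem:C4_bases} and the case-by-case superbase computations; your proposal simply defers to it. Note that half of the dichotomy does admit a conceptual proof in your spirit: when $K$ is tame, the Hilbert--Speiser theorem gives a normal integral basis $(\gamma,\sigma\gamma,\sigma^2\gamma,\sigma^3\gamma)$ summing to $\tr(\gamma)$, and a $\sigma$-orbit superbase of $\Lambda_K^\perp$ forces the body-center $\pi(j(\gamma))=v_L+v_P$ to lie in the lattice generated by $\Lambda_K^\perp\cap L$ and $\Lambda_K^\perp\cap P$ with index $2$, i.e.\ forces $tI$; but no such shortcut exists in the wild case, where the three distinct integral-basis shapes (the paper's cases (i)--(iii)) must each be confronted. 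A secondary, fixable gap: your identification of the vertical edge $c$ with the shortest vector of $\pi_L(j(\mc{O}_{K_2}))$ needs the equality $\Lambda_K^\perp\cap L=\pi_L(j(\mc{O}_{K_2}))$, not just the containment $\supseteq$ (it holds because an element of $\mc{O}_K$ whose $j$-image has vanishing $P$-component lies in $K_2$); this matters because a stray factor of $2$ there would corrupt the side ratios at precisely the same scale as the centering factor you are trying to detect.
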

Along the way we will prove several more explicit results that are also of interest (e.g.~Lemma~\ref{lem:C4_Gamma}, and Propositions~\ref{prop:C4_case_i}, \ref{prop:C4_case_ii}, and \ref{prop:C4_case_iii}). We begin with some remarks.
\begin{remark}\mbox{}
	\begin{enumerate}
		\item There will be five cases we deal with, essentially depending on the ramification of $2$. The ratio $\left(\dfrac{4|\Delta_2|}{\mc{N}}\right)^{1/4}$ is given by $|A|^{-1/2}$ in case (i), and by $(4|A|)^{-1/2}$ and $(2|A|)^{-1/2}$ in cases (ii) and (iii), respectively. In cases (iv) and (v), $\left(\dfrac{|\Delta_2|}{\mc{N}}\right)^{1/4}=|A|^{-1/2}$. See Lemma~\ref{lem:translateABCDND2} below for these formulas.
		\item A simple argument using class field theory shows that if $p$ is an odd prime, then $v_p(\Delta_K)=3$ implies $p\equiv1\mod{4}$. Specifically, only $2$ and primes that are $1\mod{4}$ can be ramified in $K_2$ and all primes that ramify in $K_2$ must also ramify in $K/K_2$.
	\end{enumerate}
\end{remark}

In \cite{HHRWH}, it is shown that every $C_4$-quartic field $K$ can be written uniquely in the form ${K=\QQ(\alpha)}$, where $\alpha=\sqrt{A(D+B\sqrt{D})}$ with $A,B,C,D\in\ZZ$ satisfying
\begin{itemize}
	\item $A$ is squarefree and odd,
	\item $D=B^2+C^2$ is squarefree and $B,C>0$,
	\item $\gcd(A,D)=1$.
\end{itemize}
Note that $K$ is totally real if $A>0$ and totally imaginary if $A<0$. In the following, there are 5 cases to consider:
\begin{itemize}
	\item[(i)] $D$ even;
	\item[(ii)] $D$ and $B$ odd;
	\item[(iii)] $D$ odd and $B$ even, $A+B\equiv3\mod{4}$;
	\item[(iv)] $D$ odd and $B$ even, $A+B\equiv1\mod{4}$, $A\equiv C\mod{4}$;
	\item[(v)] $D$ odd and $B$ even, $A+B\equiv1\mod{4}$, $A\equiv-C\mod{4}$.
\end{itemize}
Define
\[	\epsilon=	\begin{cases}
				-1	&\text{in case (v)},\\
				1	&\text{otherwise.}
			\end{cases}
\]
Let $\beta=\sqrt{A(D-B\sqrt{D})}$ and let $\sigma$ be the generator of $\Gal(K/\QQ)$ such that $\sigma^{\epsilon}(\alpha)=\beta$. We introduce the following normal basis $(\gamma_0,\gamma_1,\gamma_2,\gamma_3)$ of $K/\QQ$
\begin{align*}
	\gamma_0&=\frac{1}{4}\left(1+\sqrt{D}+\alpha+\epsilon\beta\right)\\
	\gamma_1&=\frac{1}{4}\left(1-\sqrt{D}-\alpha+\epsilon\beta\right)\\
	\gamma_2&=\frac{1}{4}\left(1+\sqrt{D}-\alpha-\epsilon\beta\right)\\
	\gamma_3&=\frac{1}{4}\left(1-\sqrt{D}+\alpha-\epsilon\beta\right),
\end{align*}
so that $\gamma_i=\sigma^i(\gamma_0)$.
One can show that $\disc(\gamma_0,\gamma_1,\gamma_2,\gamma_3)=A^2D^3$. Let $\Gamma$ be the lattice generated by the $\gamma_i$. In cases (iv) and (v), \cite{Spearman-Williams} shows that $(\gamma_0,\gamma_1,\gamma_2,\gamma_3)$ is an \textit{integral} basis of $K$. In the remaining cases, the discriminants \cite{HHRWH} and integral bases \cite{Hudson-Williams} are
\begin{itemize}
	\item[(i)] $\Delta_K=2^8A^2D^3$, basis: $(1,\sqrt{D},\alpha,\beta)$;
	\item[(ii)] $\Delta_K=2^6A^2D^3$, basis: $(1,\frac{1+\sqrt{D}}{2},\alpha,\beta)$;
	\item[(iii)] $\Delta_K=2^4A^2D^3$, basis: $(1,\frac{1+\sqrt{D}}{2},\frac{\alpha+\beta}{2},\frac{\alpha-\beta}{2})$.
\end{itemize}
With a few simple computations, we obtain the following.
\begin{lemma}\label{lem:C4_bases}
	In all cases, $\mc{O}_K$ is a sublattice of $\Gamma$. In cases (i)--(iii), we may take as an integral basis:
	\begin{itemize}
		\item[(i)] $(1,2(\gamma_0+\gamma_2),2(\gamma_0+\gamma_3),2(\gamma_0+\gamma_1))$;
		\item[(ii)] $(1,\gamma_0+\gamma_2,2(\gamma_0+\gamma_3),2(\gamma_0+\gamma_1))$;
		\item[(iii)] $(1,\gamma_0+\gamma_2,\gamma_0-\gamma_2,\gamma_3-\gamma_1)$.
	\end{itemize}
\end{lemma}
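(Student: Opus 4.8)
The plan is to reduce the whole statement to the elementary observation that each proposed tuple is obtained from a \emph{known} integral basis by a unimodular change of coordinates, and that its entries visibly lie in $\Gamma$. No arithmetic input beyond the bases quoted from \cite{Spearman-Williams}, \cite{HHRWH}, and \cite{Hudson-Williams} is needed; everything is linear algebra over $\ZZ$.

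First I would record the handful of combinations of the $\gamma_i$ that make the comparison transparent. A direct computation gives
\[
	\gamma_0+\gamma_2=\frac{1+\sqrt{D}}{2},\qquad \gamma_0+\gamma_3=\frac{1+\alpha}{2},\qquad \gamma_0+\gamma_1=\frac{1+\epsilon\beta}{2},
\]
together with
\[
	\gamma_0-\gamma_2=\frac{\alpha+\epsilon\beta}{2},\qquad \gamma_3-\gamma_1=\frac{\alpha-\epsilon\beta}{2}.
\]
The crucial bookkeeping point is that $\epsilon=1$ in each of cases (i)--(iii), since only case (v) has $\epsilon=-1$; thus throughout these three cases one may replace $\epsilon\beta$ by $\beta$.

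Next I would verify the three proposed bases one at a time. In case (iii), substituting $\epsilon=1$ shows that $(1,\gamma_0+\gamma_2,\gamma_0-\gamma_2,\gamma_3-\gamma_1)$ equals $\bigl(1,\tfrac{1+\sqrt{D}}{2},\tfrac{\alpha+\beta}{2},\tfrac{\alpha-\beta}{2}\bigr)$ on the nose, which is the integral basis of \cite{Hudson-Williams}, so nothing more is required. In cases (i) and (ii) the same substitutions give $2(\gamma_0+\gamma_2)=1+\sqrt{D}$, $2(\gamma_0+\gamma_3)=1+\alpha$, $2(\gamma_0+\gamma_1)=1+\beta$, and $\gamma_0+\gamma_2=\tfrac{1+\sqrt{D}}{2}$. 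Hence the proposed tuples are $(1,1+\sqrt{D},1+\alpha,1+\beta)$ and $\bigl(1,\tfrac{1+\sqrt{D}}{2},1+\alpha,1+\beta\bigr)$, respectively, each of which differs from the stated integral basis only by subtracting the integer $1$ from the last three entries---a unimodular operation. Therefore each proposed tuple is again an integral basis of $\mc{O}_K$.

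Finally, the inclusion $\mc{O}_K\subseteq\Gamma$ drops out of these same computations. In cases (iv) and (v) the $\gamma_i$ are themselves an integral basis by \cite{Spearman-Williams}, so $\Gamma=\mc{O}_K$ and the inclusion is an equality. In cases (i)--(iii) every element of the exhibited integral basis has just been written as a $\ZZ$-linear combination of $\gamma_0,\gamma_1,\gamma_2,\gamma_3$, so that integral basis lies in $\Gamma$ and hence $\mc{O}_K\subseteq\Gamma$. There is no genuine obstacle here; the only thing to watch is the sign of $\epsilon$, and the point is precisely that the sign issue never interferes, since the single case with $\epsilon=-1$ is also the case in which $\Gamma$ already coincides with $\mc{O}_K$.
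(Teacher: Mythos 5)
Your proof is correct and is essentially the paper's own argument: the paper's proof simply records the identities $2(\gamma_0+\gamma_2)=1+\sqrt{D}$, $2(\gamma_0+\gamma_3)=1+\alpha$, $2(\gamma_0+\gamma_1)=1+\beta$, $\gamma_0-\gamma_2=\tfrac{\alpha+\beta}{2}$, $\gamma_3-\gamma_1=\tfrac{\alpha-\beta}{2}$ and leaves the comparison with the quoted integral bases, the unimodularity, and the inclusion $\mc{O}_K\subseteq\Gamma$ to the reader. Your write-up just makes explicit what the paper leaves implicit (the $\epsilon=1$ bookkeeping in cases (i)--(iii), the unimodular shift by $1$, and the equality $\Gamma=\mc{O}_K$ in cases (iv)--(v)), so there is nothing to correct.
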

\begin{proof}
	We simply note that
	\begin{align*}
		2(\gamma_0+\gamma_2)&=1+\sqrt{D},\\
		2(\gamma_0+\gamma_3)&=1+\alpha,\\
		2(\gamma_0+\gamma_1)&=1+\beta,\\
		\gamma_0-\gamma_2&=\frac{\alpha+\beta}{2},\text{ and}\\
		\gamma_3-\gamma_1&=\frac{\alpha-\beta}{2}.
	\end{align*}
\end{proof}

We will repeatedly use the following simple result whose proof we leave to the reader.
\begin{lemma}\label{lem:C4_trace_is_0}
	The trace of each of $\sqrt{D},\alpha,$ and $\beta$ is zero. The three pairwise inner products of $j(\sqrt{D}),j(\alpha),$ and $j(\beta)$ are all zero. Furthermore,
	\begin{equation}
		\langle j(\alpha),j(\alpha)\rangle=\langle j(\beta),j(\beta)\rangle=4|A|D.
	\end{equation}
\end{lemma}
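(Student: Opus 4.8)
The plan is to make the four embeddings of $K$ completely explicit via the Galois action and then read off the traces and Hermitian inner products directly, in the same spirit as Lemma~\ref{lem:usefulcomputations} for the $V_4$ case. First I would pin down how the generator $\sigma$ of $\Gal(K/\QQ)\cong C_4$ moves $\sqrt{D}$, $\alpha$, and $\beta$. Since the unique quadratic subfield is $K_2=\QQ(\sqrt{D})$ and $\sigma|_{K_2}$ has order $2$, we get $\sigma(\sqrt{D})=-\sqrt{D}$. The subgroup $\langle\sigma^2\rangle=\Gal(K/K_2)$ fixes $\sqrt{D}$ and sends $\alpha\mapsto-\alpha$ (as $\alpha^2\in K_2$), hence also $\beta\mapsto-\beta$. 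Finally, from the defining relation $\sigma^{\epsilon}(\alpha)=\beta$ together with $\beta^2=A(D-B\sqrt{D})=\sigma(\alpha^2)$, one deduces $\sigma(\alpha)=\epsilon\beta$ and $\sigma(\beta)=-\epsilon\alpha$; crucially the sign $\epsilon$ will cancel in every computation below (it always enters squared), so cases (iv) and (v) are handled at once.

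Applying $\id,\sigma,\sigma^2,\sigma^3$ then gives the Minkowski embeddings
\[
	j(\sqrt{D})=(\sqrt{D},-\sqrt{D},\sqrt{D},-\sqrt{D}),\quad j(\alpha)=(\alpha,\epsilon\beta,-\alpha,-\epsilon\beta),\quad j(\beta)=(\beta,-\epsilon\alpha,-\beta,\epsilon\alpha).
\]
The three traces are the coordinate sums of these tuples, which visibly telescope to $0$; equivalently, each of $\sqrt{D},\alpha,\beta$ is odd under $\sigma^2$ (or under $\sigma$ in the case of $\sqrt{D}$), forcing the trace to vanish. This disposes of the first assertion, and via $\langle j(1),j(\theta)\rangle=\tr(\theta)$ from \S\ref{sec:shape} it also shows each of these three vectors is orthogonal to $j(1)$, as one expects.

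For the inner products I would compute with the standard Hermitian form $\langle v,w\rangle=\sum_k v_k\overline{w_k}$. The only inputs needed are $|\alpha|^2$, $|\beta|^2$, and the product $\alpha\overline\beta$. Since $D=B^2+C^2>0$, $\sqrt{D}$ is real and $D\pm B\sqrt{D}>0$; thus $\alpha^2=A(D+B\sqrt{D})$ and $\beta^2=A(D-B\sqrt{D})$ are real of sign $\sgn(A)$, so $\alpha,\beta$ are real when $A>0$ and purely imaginary when $A<0$. In either case $|\alpha|^2=|A|(D+B\sqrt{D})$ and $|\beta|^2=|A|(D-B\sqrt{D})$, whence $\langle j(\alpha),j(\alpha)\rangle=2|\alpha|^2+2|\beta|^2=4|A|D$, and identically for $\beta$. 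For the cross terms, $\alpha\beta=\pm AC\sqrt{D}$ is real (using $\alpha^2\beta^2=A^2DC^2$), and $\sqrt{D}$ is real, so $\alpha\overline\beta=\overline\alpha\beta$; substituting the tuples above, $\langle j(\alpha),j(\beta)\rangle=2(\alpha\overline\beta-\overline\alpha\beta)=0$, and likewise $\langle j(\sqrt{D}),j(\alpha)\rangle=\langle j(\sqrt{D}),j(\beta)\rangle=0$ once the signed terms cancel in pairs.

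The one place that requires care — and the only real obstacle — is the conjugation in the Hermitian form when $K$ is totally imaginary ($A<0$): there $\overline\alpha=-\alpha$ and $\overline\beta=-\beta$, so one must check that the diagonal entries still come out as the \emph{absolute} value $4|A|D$ and that the cross terms still vanish rather than doubling up. Both are guaranteed by the two observations above, namely that $|\alpha|^2=|A|(D+B\sqrt{D})$ uniformly in the sign of $A$, and that the reality of $\alpha\beta$ and of $\sqrt{D}$ yields $\alpha\overline\beta=\overline\alpha\beta$ in both the real and imaginary cases. Everything else is a routine substitution, and since $\epsilon^2=1$ the sign drops out, so no separate treatment of the five ramification cases is needed for this lemma.
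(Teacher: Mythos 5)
Your proof is correct, and since the paper explicitly leaves this lemma ``to the reader,'' there is no authorial proof to diverge from: your argument --- ordering the four embeddings by the Galois action (using $\sigma(\sqrt{D})=-\sqrt{D}$, $\sigma^2(\alpha)=-\alpha$, hence $\sigma(\alpha)=\epsilon\beta$ and $\sigma(\beta)=-\epsilon\alpha$) and then computing the Hermitian inner products directly with a case split on the sign of $A$ --- is exactly the routine verification the authors intended. The delicate points all check out: $\epsilon$ enters only squared, $|\alpha|^2=|A|(D+B\sqrt{D})$ and $|\beta|^2=|A|(D-B\sqrt{D})$ hold in both the totally real and totally imaginary cases (giving the diagonal entry $2|\alpha|^2+2|\beta|^2=4|A|D$), and the cross term $\langle j(\alpha),j(\beta)\rangle=2\left(\alpha\overline{\beta}-\overline{\alpha}\beta\right)$ vanishes because $\alpha\beta=\pm AC\sqrt{D}$ is real and $\alpha,\beta$ are simultaneously real or simultaneously purely imaginary.
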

A simple consequence is the following.
\begin{lemma}\label{lem:C4_Gamma}
	The elements $\gamma_0^\perp,\gamma_1^\perp,\gamma_2^\perp,\gamma_3^\perp$ form an obtuse superbase of $\Gamma^\perp$; indeed,
	\begin{align*}
	\gamma_0^\perp&=\sqrt{D}+\alpha+\epsilon\beta\\
	\gamma_1^\perp&=-\sqrt{D}-\alpha+\epsilon\beta\\
	\gamma_2^\perp&=\sqrt{D}-\alpha-\epsilon\beta\\
	\gamma_3^\perp&=-\sqrt{D}+\alpha-\epsilon\beta.
\end{align*}
Its Gram matrix is
\begin{equation}
	\begin{pmatrix}
		4D(1+2|A|)&-4D&4D(1-2|A|)&-4D\\[5pt]
		-4D&4D(1+2|A|)&-4D&4D(1-2|A|)\\[5pt]
		4D(1-2|A|)&-4D&4D(1+2|A|)&-4D\\[5pt]
		-4D&4D(1-2|A|)&-4D&4D(1+2|A|)
	\end{pmatrix}
\end{equation}
and its conorm diagram is that of Figure~\ref{fig:conorms_tI}(a) with $P_1=4D$ and $P_2=4D(2|A|-1)$.
\end{lemma}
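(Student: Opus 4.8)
The plan is to feed the normal basis $(\gamma_0,\gamma_1,\gamma_2,\gamma_3)$ through the perp-map formalism of \S\ref{sec:shape}. First I would record that each $\gamma_i$ has trace $1$: since $\gamma_i=\frac{1}{4}(1\pm\sqrt{D}\pm\alpha\pm\epsilon\beta)$ and Lemma~\ref{lem:C4_trace_is_0} gives $\tr(\sqrt{D})=\tr(\alpha)=\tr(\beta)=0$ while $\tr(1)=4$, we get $\tr(\gamma_i)=1$ for every $i$. Hence $\gamma_i^\perp=4\gamma_i-\tr(\gamma_i)=4\gamma_i-1$, and cancelling the constant term produces exactly the four displayed expressions $\pm\sqrt{D}\pm\alpha\pm\epsilon\beta$.

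Next I would check that these four vectors form a superbase of $\Gamma^\perp$. Summing the coefficients of $\sqrt{D}$, $\alpha$, and $\epsilon\beta$ shows $\gamma_0^\perp+\gamma_1^\perp+\gamma_2^\perp+\gamma_3^\perp=0$. To see that $(\gamma_1^\perp,\gamma_2^\perp,\gamma_3^\perp)$ is a $\ZZ$-basis, I would use that $(\gamma_0,\dots,\gamma_3)$ is a $\ZZ$-basis of $\Gamma$ (their discriminant $A^2D^3$ is nonzero) and that the perp map has kernel $\QQ$; a short computation shows $\Gamma\cap\QQ=\ZZ\cdot 1$, generated by $\gamma_0+\cdots+\gamma_3=1$. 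Thus perp identifies $\Gamma^\perp$ with $\Gamma/\ZZ\cdot 1$, on which the images $\gamma_1^\perp,\gamma_2^\perp,\gamma_3^\perp$ form a basis and $\gamma_0^\perp=-(\gamma_1^\perp+\gamma_2^\perp+\gamma_3^\perp)$.

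The computational heart is the Gram matrix. Writing $s=j(\sqrt{D})$, $a=j(\alpha)$, $b=j(\beta)$, Lemma~\ref{lem:C4_trace_is_0} says $s,a,b$ are pairwise orthogonal with $\langle a,a\rangle=\langle b,b\rangle=4|A|D$, and since $D>0$ is rational one has $\langle s,s\rangle=\tr(D)=4D$. Expanding each $\langle j(\gamma_i^\perp),j(\gamma_j^\perp)\rangle$ bilinearly (every sign is $\pm1$ and $\epsilon^2=1$) gives the stated matrix: the diagonal is $4D(1+2|A|)$, and the six off-diagonal entries take only the two values $-4D$ and $4D(1-2|A|)$, the latter occurring precisely at the complementary positions $(0,2)$ and $(1,3)$. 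Because $A$ is a nonzero integer we have $|A|\geq1$, so both $-4D<0$ and $4D(1-2|A|)\leq0$; hence every product $\gamma_i^\perp\cdot\gamma_j^\perp$ is $\leq0$ and the superbase is genuinely obtuse.

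Finally I would read off the conorm diagram. As the superbase is obtuse, its putative conorm diagram is already the conorm diagram, so no Voronoi reduction is needed: the conorms $p_{ij}=-\gamma_i^\perp\cdot\gamma_j^\perp$ equal $4D$ on $p_{01},p_{03},p_{12},p_{23}$ and $4D(2|A|-1)$ on the complementary pair $p_{02},p_{13}$, which is exactly the incidence pattern of Figure~\ref{fig:conorms_tI}(a) with $P_1=4D$ and $P_2=4D(2|A|-1)$. Equivalently, this is the Gram matrix of the body-centered tetragonal superbase of that proposition with $c^2=16D$ and $a^2=16D|A|$ (so $c/a=|A|^{-1/2}\leq1<\sqrt{2}$), identifying $\Gamma^\perp$ as a scaled body-centered tetragonal lattice and forcing the central conorm to vanish. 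I expect the only step needing genuine care to be this last matching, namely confirming that the placement of the two distinct conorm values (and hence the vanishing of the center) agrees with Figure~\ref{fig:conorms_tI}(a) rather than some other arrangement; the explicit coincidence of Gram matrices just noted settles it.
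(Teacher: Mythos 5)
Your proof is correct and takes essentially the same route as the paper, which states this lemma with no written proof at all (``A simple consequence'' of Lemma~\ref{lem:C4_trace_is_0}) --- i.e., precisely the direct computation you carry out: traces give the perp formulas, orthogonality gives the Gram matrix, and $|A|\geq1$ gives obtuseness so the putative conorm diagram is the conorm diagram. Your extra verifications (that $\Gamma\cap\QQ=\ZZ\cdot1$ so the perps form a genuine superbase, and the cross-check against the body-centered tetragonal superbase with $c^2=16D$, $a^2=16|A|D$) simply fill in details the paper leaves implicit.
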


We will use the following lemma to translate between the parameters $A,B,C,D$ and $\Delta_2,\mc{N}$.
\begin{lemma}\label{lem:translateABCDND2}
	Let $p$ be an odd prime.
	\begin{enumerate}
		\item The valuation $v_p(\Delta_K)=3$ if and only if $p$ ramifies in both $K_2$ and $K/K_2$.
		\item The prime $2$ ramifies in $K_2$ if and only if $v_2(\Delta_2)=3$.
		\item When $2$ is unramified in $K$,
		\[
			\left(\frac{|\Delta_2|}{\mc{N}}\right)^{1/4}=|A|^{-1/2}.
		\]
		\item When $2$ ramifies in $K_2$,
		\[
			\left(\frac{4|\Delta_2|}{\mc{N}}\right)^{1/4}=|A|^{-1/2}.
		\]
		\item When $2$ ramifies in $K$, but not in $K_2$,
		\[
			\left(\frac{4|\Delta_2|}{\mc{N}}\right)^{1/4}=\begin{cases}
													(4|A|)^{-1/2}&\text{in case (ii)},\\
													(2|A|)^{-1/2}&\text{in case (iii)}.
												\end{cases}
		\]
	\end{enumerate}
\end{lemma}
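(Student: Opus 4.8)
The plan is to reduce everything to the tower (conductor--discriminant) formula for discriminants, together with the explicit discriminant values already recorded above. Writing $K_2=\QQ(\sqrt D)$ for the quadratic subfield, the tower formula gives
\[
	|\Delta_K|=\mc{N}\cdot|\Delta_2|^2,
\]
so that $\mc{N}/|\Delta_2|=|\Delta_K|/|\Delta_2|^3$; once I know $|\Delta_K|$, $|\Delta_2|$, and which of the three ramification regimes of $2$ we are in, parts (3)--(5) become direct computations. Beyond the values $\Delta_K=2^8A^2D^3,\,2^6A^2D^3,\,2^4A^2D^3,\,A^2D^3,\,A^2D^3$ in cases (i)--(v), the only arithmetic input I need is $\Delta_2$, which I read off from the parity of $D$: since $D=B^2+C^2$ with $\gcd(B,C)=1$, the integer $D$ is either $\equiv2\mod{4}$ (when $B,C$ are both odd) or $\equiv1\mod{4}$ (when exactly one of $B,C$ is even), and it is \emph{never} $\equiv3\mod{4}$. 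Hence $\Delta_2=4D$ in case (i) and $\Delta_2=D$ in cases (ii)--(v).

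For part (2), the same parity observation does the work: $2$ ramifies in $K_2=\QQ(\sqrt D)$ exactly when $D\not\equiv1\mod{4}$, and since $D\equiv3\mod{4}$ cannot occur, this happens precisely when $D\equiv2\mod{4}$, i.e.\ when $v_2(\Delta_2)=3$; the intermediate value $v_2(\Delta_2)=2$ (which would force $D\equiv3\mod{4}$) is excluded, which is exactly why the clean statement of part (2) is available. For part (1), I would argue through the inertia group at an odd prime $p$. Since $p$ is odd it is at worst tamely ramified in $K/\QQ$, so its inertia group $I_p$ is a cyclic subgroup of $\Gal(K/\QQ)\cong C_4$, and the tame discriminant bookkeeping yields $v_p(\Delta_K)=4-4/e_p$ with $e_p=|I_p|\in\{1,2,4\}$. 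Thus $v_p(\Delta_K)=3$ if and only if $e_p=4$, i.e.\ $I_p=C_4$; and $I_p=C_4$ is exactly the condition that $p$ ramify both in $K_2$ (the fixed field of the unique order-$2$ subgroup) and in $K/K_2$. Equivalently, I may invoke the fact recorded earlier that the inertia field of a prime ramified in $K$ is either $\QQ$ or $K_2$: ramification in $K_2$ forces the inertia field to be $\QQ$, hence total ramification and $v_p(\Delta_K)=3$.

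It then remains to carry out parts (3)--(5) case by case using the two displays above. For instance, in case (i) ($2$ ramifies in $K_2$) we have $|\Delta_2|=4D$ and $\mc{N}=2^4|A|^2D$, so $4|\Delta_2|/\mc{N}=|A|^{-2}$ and $(4|\Delta_2|/\mc{N})^{1/4}=|A|^{-1/2}$; in cases (iv),(v) ($2$ unramified, $|\Delta_2|=D$, $\mc{N}=|A|^2D$) we get $|\Delta_2|/\mc{N}=|A|^{-2}$ and hence $|A|^{-1/2}$; and in cases (ii),(iii) ($2$ ramified in $K$ but not $K_2$, $|\Delta_2|=D$, $\mc{N}=2^6|A|^2D$ and $2^4|A|^2D$ respectively) we find $4|\Delta_2|/\mc{N}=(16|A|^2)^{-1}$ and $(4|A|^2)^{-1}$, giving side ratios $(4|A|)^{-1/2}$ and $(2|A|)^{-1/2}$. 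The main obstacle is not any individual computation but rather keeping the five algebraic cases (i)--(v) correctly matched to the three ramification regimes governing parts (3)--(5), and tracking carefully that the parametrization forces $D\not\equiv3\mod{4}$ --- this last point is what makes the two-valued behaviour of $v_2(\Delta_2)$, and thus the clean statement of part (2), correct.
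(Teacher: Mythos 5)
Your proposal is correct and follows essentially the same route as the paper: both arguments rest on the tower formula $|\Delta_K|=\mc{N}\Delta_2^2$, the explicit discriminants in cases (i)--(v), and the observation that a squarefree sum of two squares is never congruent to $3$ modulo $4$ (so $v_2(\Delta_2)$ is either $0$ or $3$), after which $\mc{N}$ and the side ratios are solved for case by case exactly as you do. The only divergence is cosmetic and occurs in part (1), where you use the tame inertia-group count $v_p(\Delta_K)=4-4/e_p$ for the Galois quartic $K/\QQ$, whereas the paper tracks the relative different of $K/K_2$ and shows $p\,\|\,\mc{N}$; both hinge on the same tameness of odd primes and yield identical conclusions.
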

\begin{proof}
	The formula for discriminants in a tower implies that
	\begin{equation}\label{eqn:disctowers}
		|\Delta_K|=\mc{N}\Delta_2^2.
	\end{equation}
	If $p$ is odd, $p\,||\,\Delta_2$ if and only if $p$ is ramified. In this case, $p$ only contributes a $p^2$ to $\Delta_K$. Therefore, writing $p\mc{O}_{K_2}=\mf{p}^2$, if $v_p(\Delta_K)=3$, then we must have $\mf{p}\mc{O}_K=\mf{P}^2$. And if this is the case, since $p$ is odd, the ramification of $\mf{p}$ in $K/K_2$ is tame so that $\mf{P}^{2-1}$ exactly divides the different of $K/K_2$. The relative norm of $\mf{P}$ is $\mf{p}$, so $\mf{p}\,||\,\Delta(K/K_2)$. Since the norm of $\mf{p}$ is $p$, we have $p\,||\,\mc{N}$, so that $v_p(\Delta_K)=3$, as claimed.
	
	This implies that if $2$ is unramified in $K$, then $\Delta_2=D$ and $\mc{N}=A^2|D|$, so that $\dfrac{|\Delta_2|}{\mc{N}}=A^{-2}$, as desired.
	
	Let us now consider when $2$ ramifies in $K$. By definition, $2$ ramifies in $K_2$ if and only if $2\mid D$. Since $D$ is a squarefree sum of two squares, $2\nmid D$ if and only if $D\equiv1\mod{4}$. By construction, $K_2=\QQ(\sqrt{D})$. Combining these facts gives that $2$ is unramified in $K_2$ if and only if $\Delta_2=D$. Otherwise, $\Delta_2=4D$ and $2^3\,||\,\Delta_2$. We may now solve for $\mc{N}$ in \eqref{eqn:disctowers}. We obtain
	\[
		\mc{N}=\begin{cases}
					2^4A^2D&\text{in cases (i) and (iii)},\\
					2^6A^2D&\text{in case (ii)}.
				\end{cases}
	\]
	This yields the claimed formulas for $\left(\frac{4|\Delta_2|}{\mc{N}}\right)^{1/4}$.
\end{proof}

\subsection{$K$ unramified at $2$}
When $2$ is unramified in $K$, the elements $\gamma_0^\perp,\gamma_1^\perp,\gamma_2^\perp,\gamma_3^\perp$ form an obtuse superbase of $\mc{O}_K^\perp$. By Lemma~\ref{lem:C4_Gamma}, we see that $j(\mc{O}_K^\perp)$ is a body-centered tetragonal lattice with side lengths $a=4\sqrt{|A|D}$ and $c=4\sqrt{D}$. Thus, $\frac{c}{a}=|A|^{-1/2}\leq1$ with equality exactly when $|A|=1$. Since $\mc{O}_K=\Gamma$, its discriminant is $A^2D^3$, and hence is a cube exactly when $|A|=1$. This completes the proof of part (a) of Theorem~\ref{thm:C4_main_theorem}.

\subsection{$K$ ramified at $2$}
\subsubsection{Case (i): $D$ even}
\begin{proposition}\label{prop:C4_case_i}
	The elements $-4\gamma_0^\perp,2(\gamma_0^\perp+\gamma_1^\perp),2(\gamma_0^\perp+\gamma_2^\perp),2(\gamma_0^\perp+\gamma_3^\perp)$ form an obtuse superbase of $\mc{O}_K^\perp$. Its Gram matrix (scaled by $2^{-6}$) is
	\[
		\begin{pmatrix}
			D(2|A|+1)&-|A|D&-D&-|A|D\\
			-|A|D&|A|D&0&0\\
			-D&0&D&0\\
			-|A|D&0&0&|A|D
		\end{pmatrix}
	\]
	yielding a conorm diagram as in Figure~\ref{fig:conorms_tP} with $a=\sqrt{|A|D}$ and $c=\sqrt{D}$.
\end{proposition}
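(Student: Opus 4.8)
The plan is to verify the three assertions of the proposition—that the four vectors form an obtuse superbase of $\mc{O}_K^\perp$, that their scaled Gram matrix is the displayed one, and that the associated conorm diagram is the primitive tetragonal one—by a direct computation built on the orthogonality recorded in Lemma~\ref{lem:C4_trace_is_0}. First I would simplify the four vectors. Writing $w_0=-4\gamma_0^\perp$ and $w_i=2(\gamma_0^\perp+\gamma_i^\perp)$ for $i=1,2,3$ and substituting the explicit expressions from Lemma~\ref{lem:C4_Gamma}, the cross terms collapse: $\gamma_0^\perp+\gamma_1^\perp=2\epsilon\beta$, $\gamma_0^\perp+\gamma_2^\perp=2\sqrt{D}$, and $\gamma_0^\perp+\gamma_3^\perp=2\alpha$, so that $w_1=4\epsilon\beta$, $w_2=4\sqrt{D}$, $w_3=4\alpha$, while $w_0=-4(\sqrt{D}+\alpha+\epsilon\beta)$. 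This presentation makes the later Gram computation transparent.

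To see that $(w_1,w_2,w_3)$ is a $\ZZ$-basis of $\mc{O}_K^\perp$ (and not merely of $\Gamma^\perp$), I would invoke Lemma~\ref{lem:C4_bases}: in case (i) an integral basis of $\mc{O}_K$ is $\left(1,2(\gamma_0+\gamma_2),2(\gamma_0+\gamma_3),2(\gamma_0+\gamma_1)\right)$. Since the perp map $\xi\mapsto 4\xi-\tr(\xi)$ is $\QQ$-linear and annihilates $1$, applying it to the three nontrivial basis elements produces exactly $w_2,w_3,w_1$, which by the discussion in \S\ref{sec:shape} form a $\ZZ$-basis of $\mc{O}_K^\perp$. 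The superbase relation $w_0+w_1+w_2+w_3=0$ then follows from $\sum_{i=0}^3\gamma_i^\perp=0$, which holds because $\sum_i\gamma_i=1$ forces $\sum_i\gamma_i^\perp=4-\tr(1)=0$; indeed $w_1+w_2+w_3=4\gamma_0^\perp=-w_0$.

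Next I would compute the Gram matrix. Because $j(\sqrt{D})$, $j(\alpha)$, $j(\beta)$ are pairwise orthogonal with $\langle j(\alpha),j(\alpha)\rangle=\langle j(\beta),j(\beta)\rangle=4|A|D$ by Lemma~\ref{lem:C4_trace_is_0}, and $\langle j(\sqrt{D}),j(\sqrt{D})\rangle=4D$ (immediate, as all four embeddings send $\sqrt{D}$ to $\pm\sqrt{D}$, of modulus $\sqrt{D}$), each inner product $\langle j(w_i),j(w_j)\rangle$ reduces to a one-term sum; dividing by $2^6$ reproduces the displayed matrix. Its off-diagonal entries are $-|A|D,-D,-|A|D$ and three zeros, all $\leq 0$ since $D>0$ and $|A|\geq 1$, which is precisely the obtuse condition.

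Finally, reading the conorms $p_{ij}=-\langle j(w_i),j(w_j)\rangle/2^6$ off the matrix gives $p_{12}=p_{13}=p_{23}=0$ together with $p_{01}=p_{03}=|A|D$ and $p_{02}=D$, which matches the labelling of Figure~\ref{fig:conorms_tP} under $a^2=|A|D$ and $c^2=D$, i.e.\ $a=\sqrt{|A|D}$ and $c=\sqrt{D}$. I expect no genuine obstacle: the computation is entirely bilinear bookkeeping. The only two points that require a moment's care are confirming, via Lemma~\ref{lem:C4_bases}, that the integral basis of $\mc{O}_K$ perps onto the stated triple (so that the lattice in question is really $\mc{O}_K^\perp$), and noting that $|A|\geq 1$—automatic since $A$ is a nonzero squarefree odd integer—secures the obtuseness.
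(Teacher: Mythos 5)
Your proposal is correct and follows essentially the same route as the paper: the paper's proof likewise obtains the superbase structure from Lemma~\ref{lem:C4_bases}, the obtuseness/Gram entries from the bilinear data in Lemmas~\ref{lem:C4_trace_is_0} and~\ref{lem:C4_Gamma}, and dismisses the Gram matrix as a simple computation. Your write-up merely fills in that computation explicitly (via the clean simplification $w_1=4\epsilon\beta$, $w_2=4\sqrt{D}$, $w_3=4\alpha$), which is a faithful expansion rather than a different argument.
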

\begin{proof}
	The superbaseness follows from Lemma~\ref{lem:C4_bases} and the obtuseness from Lemma~\ref{lem:C4_Gamma}. Determining the Gram matrix is a simple computation and the conorm diagram is exactly as stated.
\end{proof}
This shows that $j(\mc{O}_K^\perp)$ is a primitive tetragonal lattice with side lengths $a=\sqrt{|A|D}$ and $c=\sqrt{D}$. Thus, $\frac{c}{a}=|A|^{-1/2}\leq1$ with equality exactly when $|A|=1$. Since the discriminant of $\mc{O}_K$ is $2^{11}A^2\left(\frac{D}{2}\right)^3$ in this case, we have completed the proof of part (b) of Theorem~\ref{thm:C4_main_theorem} in case (i).

It remains to deal with cases (ii) and (iii); in particular, we must show that neither of these cases give cubic lattices.

\subsubsection{Case (ii): $D$ and $B$ odd}
\begin{proposition}\label{prop:C4_case_ii}
	The elements $\gamma_2^\perp-3\gamma_0^\perp,2(\gamma_0^\perp+\gamma_1^\perp),\gamma_0^\perp+\gamma_2^\perp,2(\gamma_0^\perp+\gamma_3^\perp)$ form an obtuse superbase of $\mc{O}_K^\perp$. Its Gram matrix (scaled by $2^{-4}$) is
	\[
		\begin{pmatrix}
			D(8|A|+1)&-4|A|D&-D&-4|A|D\\
			-4|A|D&4|A|D&0&0\\
			-D&0&D&0\\
			-4|A|D&0&0&4|A|D
		\end{pmatrix}
	\]
	yielding a conorm diagram as in Figure~\ref{fig:conorms_tP} with $a=2\sqrt{|A|D}$ and $c=\sqrt{D}$. In particular, $a\neq c$.
\end{proposition}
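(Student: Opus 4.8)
The plan is to mirror the proof of Proposition~\ref{prop:C4_case_i}. The first step is to check that the four displayed vectors form a superbase of $\mc{O}_K^\perp$. Since the perp map $\gamma\mapsto\gamma^\perp=4\gamma-\tr(\gamma)$ is $\QQ$-linear, I would apply it to the case~(ii) integral basis $(1,\gamma_0+\gamma_2,2(\gamma_0+\gamma_3),2(\gamma_0+\gamma_1))$ of Lemma~\ref{lem:C4_bases}: it annihilates the rational $1$ (as $\tr(1)=4$) and sends the remaining three generators to $\gamma_0^\perp+\gamma_2^\perp$, $2(\gamma_0^\perp+\gamma_3^\perp)$, and $2(\gamma_0^\perp+\gamma_1^\perp)$, which are exactly the three vectors other than $\gamma_2^\perp-3\gamma_0^\perp$ appearing in the statement. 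Hence those three form a $\ZZ$-basis of $\mc{O}_K^\perp$. A short computation using $\gamma_0^\perp+\gamma_1^\perp+\gamma_2^\perp+\gamma_3^\perp=0$ from Lemma~\ref{lem:C4_Gamma} then shows that $\gamma_2^\perp-3\gamma_0^\perp$ equals the negative of the sum of the other three (the coefficient of each $\gamma_i^\perp$ in the total sum of all four vectors is $2$), so the full quadruple is a superbase.

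For the Gram matrix, the cleanest route is to expand in the orthogonal system $\{\sqrt{D},\alpha,\beta\}$ of Lemma~\ref{lem:C4_trace_is_0}, whose members satisfy $\langle j(\sqrt D),j(\sqrt D)\rangle=4D$, $\langle j(\alpha),j(\alpha)\rangle=\langle j(\beta),j(\beta)\rangle=4|A|D$, and have vanishing pairwise inner products. Writing the four superbase vectors in these coordinates gives $(-2,-4,-4\epsilon)$, $(0,0,4\epsilon)$, $(2,0,0)$, and $(0,4,0)$; since $\epsilon^2=1$, the inner products are immediate, and after scaling by $2^{-4}$ they reproduce the claimed matrix. (Alternatively one could conjugate the Gram matrix of Lemma~\ref{lem:C4_Gamma} by the integer change-of-basis matrix, but the coordinate computation avoids the $4\times4$ multiplication.)

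The superbase is obtuse because the only nonzero off-diagonal entries, namely the inner products of $\gamma_2^\perp-3\gamma_0^\perp$ with the other three vectors, equal $-4|A|D$, $-D$, and $-4|A|D$, all negative since $A\neq0$ and $D>0$. Consequently no Voronoi reduction is needed and the conorm diagram is read directly from the negatives of the off-diagonal entries: the conorms emanating from $\gamma_2^\perp-3\gamma_0^\perp$ are $4|A|D$, $D$, $4|A|D$, while all others vanish. This matches Figure~\ref{fig:conorms_tP} with $a^2=4|A|D$ and $c^2=D$, i.e.\ $a=2\sqrt{|A|D}$ and $c=\sqrt D$. Finally, $a=c$ would force $4|A|=1$, which is impossible, so $a\neq c$ and the shape is genuinely primitive tetragonal rather than cubic.

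I do not anticipate a serious obstacle: once the superbase is handed to us by the statement, the content is the bookkeeping identifying the three nontrivial vectors with the perp-image of an honest integral basis and confirming that this particular combination is obtuse. The only point requiring a little care is that the chosen superbase is the obtuse one, so that its putative conorm diagram is already a genuine conorm diagram and Figure~\ref{fig:conorms_tP} may be read off without reduction; the nonpositivity of all six off-diagonal inner products settles this.
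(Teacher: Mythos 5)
Your proposal is correct and follows essentially the same route as the paper's (very terse) proof: superbaseness via the case~(ii) integral basis of Lemma~\ref{lem:C4_bases} and the relation $\sum_i\gamma_i^\perp=0$, the Gram matrix by direct computation from the orthogonality relations of Lemma~\ref{lem:C4_trace_is_0}, obtuseness and the conorm diagram read off from that matrix, and $a\neq c$ because $a=c$ would force $4|A|=1$, contradicting $A\in\ZZ$. The only cosmetic difference is that you compute inner products in the coordinates $\{\sqrt{D},\alpha,\beta\}$ rather than conjugating the Gram matrix of Lemma~\ref{lem:C4_Gamma} by the change-of-basis matrix; these are equivalent bookkeeping choices.
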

\begin{proof}
	Again, the superbaseness follows from Lemma~\ref{lem:C4_bases} and the obtuseness from Lemma~\ref{lem:C4_Gamma}. Determining the Gram matrix is again a simple computation and the conorm diagram is exactly as stated. If $a=c$, then $A$ would not be an integer.
\end{proof}
\subsubsection{Case (iii): $D$ odd, $A+B\equiv3\mod{4}$}
\begin{proposition}\label{prop:C4_case_iii}
	The elements $\gamma_1^\perp-\gamma_3^\perp-2\gamma_0^\perp,\gamma_3^\perp-\gamma_1^\perp,\gamma_0^\perp+\gamma_2^\perp,\gamma_0^\perp-\gamma_2^\perp$ form an obtuse superbase of $\mc{O}_K^\perp$. Its Gram matrix (scaled by $2^{-4}$) is
	\[
		\begin{pmatrix}
			D(4|A|+1)&-2|A|D&-D&-2|A|D\\
			-2|A|D&2|A|D&0&0\\
			-D&0&D&0\\
			-2|A|D&0&0&2|A|D
		\end{pmatrix}
	\]
	yielding a conorm diagram as in Figure~\ref{fig:conorms_tP} with $a=\sqrt{2|A|D}$ and $c=\sqrt{D}$. In particular, $a\neq c$.
\end{proposition}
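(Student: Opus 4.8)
The plan is to mirror the proofs of Propositions~\ref{prop:C4_case_i} and~\ref{prop:C4_case_ii}, since the only thing that changes between the cases is the relevant integral basis from Lemma~\ref{lem:C4_bases}. First I would establish that the four proposed vectors form a superbase of $\mc{O}_K^\perp$. By linearity of the perp map, the vectors $v_1=\gamma_3^\perp-\gamma_1^\perp$, $v_2=\gamma_0^\perp+\gamma_2^\perp$, and $v_3=\gamma_0^\perp-\gamma_2^\perp$ are precisely the perp images $(\gamma_3-\gamma_1)^\perp$, $(\gamma_0+\gamma_2)^\perp$, and $(\gamma_0-\gamma_2)^\perp$ of the three non-rational elements of the case~(iii) integral basis $(1,\gamma_0+\gamma_2,\gamma_0-\gamma_2,\gamma_3-\gamma_1)$ of Lemma~\ref{lem:C4_bases}. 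By the general fact recorded in \S\ref{sec:shape} --- that the perp images of $(\gamma_1,\dots,\gamma_{n-1})$ form a $\ZZ$-basis of $\mc{O}_K^\perp$ whenever $(1,\gamma_1,\dots,\gamma_{n-1})$ is an integral basis --- the triple $(v_1,v_2,v_3)$ is a $\ZZ$-basis of $\mc{O}_K^\perp$. The remaining vector is $v_0=\gamma_1^\perp-\gamma_3^\perp-2\gamma_0^\perp$, and a direct check gives $v_0+v_1+v_2+v_3=0$, so $(v_0,v_1,v_2,v_3)$ is a superbase.

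The second step is the Gram matrix computation, which I expect to be routine. Substituting $\epsilon=1$ (valid since case~(iii) is not case~(v)) into the formulas of Lemma~\ref{lem:C4_Gamma} for the $\gamma_i^\perp$ and simplifying yields $v_0=-2\sqrt{D}-4\alpha$, $v_1=2\alpha-2\beta$, $v_2=2\sqrt{D}$, and $v_3=2\alpha+2\beta$, where the $\beta$-terms conveniently cancel in $v_0$. Lemma~\ref{lem:C4_trace_is_0} then does all the work: the pairwise inner products of $j(\sqrt{D}),j(\alpha),j(\beta)$ all vanish, while $\langle j(\sqrt{D}),j(\sqrt{D})\rangle=4D$ (as $D>0$) and $\langle j(\alpha),j(\alpha)\rangle=\langle j(\beta),j(\beta)\rangle=4|A|D$. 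Expanding each $\langle v_i,v_j\rangle$ bilinearly against this diagonal reproduces the claimed matrix after dividing by $2^4$. Since $|A|,D>0$, every off-diagonal entry is $\leq0$, which is exactly the obtuseness condition, so $(v_0,v_1,v_2,v_3)$ is indeed an obtuse superbase.

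Finally, reading the scaled conorms $-p_{ij}=\langle v_i,v_j\rangle/2^4$ off the matrix gives $p_{01}=p_{03}=2|A|D$, $p_{02}=D$, and $p_{12}=p_{13}=p_{23}=0$, matching the primitive tetragonal pattern of Figure~\ref{fig:conorms_tP} with $a^2=2|A|D$ and $c^2=D$, i.e.\ $a=\sqrt{2|A|D}$ and $c=\sqrt{D}$. The claim $a\neq c$ is immediate: $a=c$ would force $2|A|=1$, impossible for the nonzero integer $A$. The only point requiring genuine care --- and the closest thing to an obstacle --- is the basis bookkeeping at the start: one must use the correct case~(iii) integral basis rather than the $\gamma_i$ themselves (which only span the overlattice $\Gamma\supseteq\mc{O}_K$), and confirm that the stated integer combinations of the $\gamma_i^\perp$ are exactly its perp images. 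Everything downstream is mechanical.
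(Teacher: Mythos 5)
Your proposal is correct and follows essentially the same route as the paper: the paper's proof is just "The proof is the same as the previous case," which in turn invokes Lemma~\ref{lem:C4_bases} for superbaseness, Lemmas~\ref{lem:C4_Gamma} and~\ref{lem:C4_trace_is_0} for the (obtuse) Gram matrix computation, and the integrality of $A$ to rule out $a=c$. You have simply written out the computations the paper leaves implicit (the simplification $v_0=-2\sqrt{D}-4\alpha$, $v_1=2\alpha-2\beta$, $v_2=2\sqrt{D}$, $v_3=2\alpha+2\beta$, and the resulting inner products), and all of them check out.
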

\begin{proof}
	The proof is the same as the previous case.
\end{proof}
This ends the proof of Theorem~\ref{thm:C4_main_theorem}

\section{The distribution of shapes of $C_4$-quartic fields}\label{sec:C4shapes_distribution}
\newcommand{\s}{\Sigma}

The shapes of $C_4$-fields form a discrete set of points that has no accumulation point in the space of shapes and as such they cannot be equidistributed in some (positive-dimensional) submanifold of the space of shapes. In this section, we will therefore determine asymptotics for the set of $C_4$-fields of given shape (and signature). Given the description we have of $C_4$-fields from \S\ref{sec:C4fields_description}, this question reduces to certain asymptotics of well-known arithmetic functions that we now describe.

\subsection{Some notation}\label{sec:C4_dist_notation}

For $\s$ a set of prime numbers and $n\in\ZZ$, we write $(n,\s)=1$ to mean that $n$ is relatively prime to every element of $\s$. Let $\s_0$ be the set of primes congruent to $2$ or $3$ modulo $4$ and let $\s=\s_0\sqcup\s_1$, where $\s_1$ is a finite set of primes disjoint from $\s_0$. For a non-zero integer $A$, let $\s_A=\s_0\cup\{p\mid A\}$.

Given an arithmetic function $f:\ZZ_{\geq1}\rightarrow\CC$, we let
\[
	L(s,f):=\sum_{n\geq1}\frac{f(n)}{n^s}.
\]

For $n\in\ZZ_{\geq1}$, let $\omega(n)$ denote the number of distinct prime divisors of $n$ and let $\mu(n)$ denote the M\"obius $\mu$-function. For one of the subsets $U=\{1\},\{5\}$, or $\{1,5\}$ of $(\ZZ/8\ZZ)^\times$, let
\[
	f_{\s,U}(n):=\begin{cases}
				|\mu(n)|2^{\omega(n)}&(n,\s)=1\text{ and }n\mod{8}\in U\\
				0&\text{otherwise}.
			\end{cases}
\]
(we will allow ourselves to drop the subscript $U$ when $U=\{1,5\}$; indeed, in this case, $U$ does not impose any extra condition since $\Sigma$ contains all primes that are not $1\mod{4}$). In the next section, we will reduce the determination of the asymptotics for $C_4$-quartic fields of a given shape to that of the following functions:
\[
	F_{\s,U}(Y)=\sum_{1\leq n\leq Y}f_{\s,U}(n).
\]

\subsection{Reduction to asymptotics of simpler arithmetic functions}
Recall from \S\ref{sec:C4fields_description} that a $C_4$-quartic field is given uniquely by $K=\QQ(\alpha)$, where $\alpha=\sqrt{A(D+B\sqrt{D})}$ with $A,B,C,D\in\ZZ$ satisfying
\begin{itemize}
	\item $A$ is squarefree and odd,
	\item $D=B^2+C^2$ is squarefree and $B,C>0$,
	\item $\gcd(A,D)=1$.
\end{itemize}
Then, Theorem~\ref{thm:C4_main_theorem} tells use that the shape of $K$ depends only on $A$ and whether the field is in case (i), (ii), (iii), or the unramified-at-$2$ cases (iv) and (v) (which we will combine here). We denote the corresponding lattice shapes by $\Lambda_A^{(\mathrm{i})},\Lambda_A^{(\mathrm{ii})},\Lambda_A^{(\mathrm{iii})}$, and $\Lambda_A^{(\nr)}$, respectively (with $\nr$ denoting the unramified cases). For each $?\in\{(\mathrm{i}),(\mathrm{ii}),(\mathrm{iii}),\nr\}$, let
\[
	N^{?,\pm}_A(X)=\#\{K\text{ a }C_4\text{-quartic field such that }|\Delta_K|\leq X,\sgn(A)=\pm1,\sh(K)=\Lambda^?_A\}.
\]

\begin{theorem}\label{thm:C4_asymptotics}
	Let  $A$ be squarefree and odd. Let
	\[C_{\s}:=\prod_p\left(1+\frac{f_{\s}(p)}{p}\right)\left(1-\frac{1}{p}\right),
	\]
	so that
	\[C_{\s_A}:=C_{\s_0}\cdot\left(\prod_{\substack{p\mid A\\p\equiv1\mod{4}}}\frac{p}{p+2}\right).
	\]
	Then, for all $\epsilon>0$,
	\begin{align}
		N^{(\mathrm{i}),\pm}_A(X)&=\frac{C_{\s_A}}{\left(2^{14}A^2\right)^{1/3}}X^{1/3}+O(X^{1/3}/(\log X)^{1-\epsilon}),\\
		N^{(\mathrm{ii}),\pm}_A(X)&=\frac{C_{\s_A}}{\left(2^{9}A^2\right)^{1/3}}X^{1/3}+O(X^{1/3}/(\log X)^{1-\epsilon}),\\
		N^{(\mathrm{iii}),\pm}_A(X)&=\frac{C_{\s_A}}{\left(2^{10}A^2\right)^{1/3}}X^{1/3}+o(X^{1/3}),\\
		N^{(\nr),\pm}_A(X)&=\frac{C_{\s_A}}{\left(2^{6}A^2\right)^{1/3}}X^{1/3}+o(X^{1/3}).
	\end{align}
\end{theorem}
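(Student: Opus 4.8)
The plan is to fix a squarefree odd $A$ and exploit Theorem~\ref{thm:C4_main_theorem}, which says that the shape of a $C_4$-quartic field depends only on $A$ together with which of the cases (i)--(v) the field falls into. Counting fields of a prescribed shape $\Lambda^?_A$ then amounts to counting the defining tuples $(A,B,C,D)$ of \cite{HHRWH} with $A$ fixed and the congruence conditions of case $?$ imposed. Since in each case $\Delta_K=2^{e_?}A^2D^3$ with $e_?$ constant, the bound $|\Delta_K|\le X$ is equivalent to $D\le Y_?$ (or $D/2\le Y_?$ in case (i)) for an explicit $Y_?=\bigl(X/(2^{e_?}A^2)\bigr)^{1/3}$. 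As each field is determined by its tuple, I would first reduce $N^{?,\pm}_A(X)$ to a sum over admissible squarefree $D$ of the number of representations $D=B^2+C^2$ with $B,C>0$ satisfying the case conditions.

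The heart of the reduction is to convert this representation count into the arithmetic functions of \S\ref{sec:C4_dist_notation}. For squarefree $D$ every representation is primitive, and the number of positive ordered representations is $2^{\omega(D)}$ when $D$ is odd and $2^{\omega(D/2)}$ when $D$ is even, each odd prime factor being $\equiv1\pmod4$; this is exactly the support and value of $f_{\Sigma_A}$ once $\gcd(D,A)=1$ is imposed (primes dividing $A$ go into $\Sigma_A$). The delicate point is deciding which representations land in which case. When $D$ is odd, exactly one of $B,C$ is even, and reducing $D=B^2+C^2$ modulo $8$ shows that the residue of the even member is forced by $D\bmod8$: the even member is $\equiv0\pmod4$ iff $D\equiv1\pmod8$ and $\equiv2\pmod4$ iff $D\equiv5\pmod8$. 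Combined with $A\bmod4$, this assigns each admissible $D$ to case (ii), (iii), or the combined unramified case (iv)/(v), and pins down the residue set $U_?\subseteq(\ZZ/8\ZZ)^\times$. The outcome is $N^{?,\pm}_A(X)=c_?\,F_{\Sigma_A,U_?}(Y_?)$ with $c_?\in\{\tfrac12,1\}$; the factor $\tfrac12$ records either restricting to the correct parity of $B$ among the $2^{\omega}$ ordered representations, or (in case (i)) the symmetry $B\leftrightarrow C$, which there preserves the case. Matching the powers $2^{14},2^{9},2^{10},2^{6}$ in the theorem is precisely this bookkeeping of $c_?$, $e_?$, and $U_?$.

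For the analytic input I would evaluate $F_{\Sigma,U}(Y)$ through its Dirichlet series. As $f_\Sigma$ is supported on squarefree integers all of whose prime factors are $\equiv1\pmod4$ and coprime to $A$ and $\Sigma_1$, one has $L(s,f_\Sigma)=\prod_{p\equiv1(4),\,p\nmid A,\,p\notin\Sigma_1}(1+2p^{-s})$. Extracting the half-density via the nontrivial character $\chi_4$ modulo $4$, I would factor $L(s,f_\Sigma)=\zeta(s)L(s,\chi_4)H(s)$, where $H(s)$ is given by an Euler product (with local factors $1-3p^{-2s}+2p^{-3s}$ and $1-p^{-2s}$) absolutely convergent for $\Re s>1/2$. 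Hence $L(s,f_\Sigma)$ continues past $\Re s=1$ with a simple pole at $s=1$ of residue $L(1,\chi_4)H(1)$, which after cancelling Euler factors equals $C_{\Sigma}$ exactly. The Wiener--Ikehara theorem then gives $F_{\Sigma}(Y)\sim C_{\Sigma}Y$. For a singleton $U=\{u\}$ the mod-$8$ constraint is detected by the real character $\chi_8$ (with $\chi_8(n)=1$ iff $n\equiv\pm1\pmod8$) through $f_{\Sigma,\{u\}}(n)=\tfrac12\bigl(f_\Sigma(n)\pm\chi_8(n)f_\Sigma(n)\bigr)$; the twisted series $L(s,\chi_8 f_\Sigma)$ involves only nonprincipal characters and so is holomorphic and nonvanishing at $s=1$, whence $\sum_{n\le Y}\chi_8(n)f_\Sigma(n)=o(Y)$ and $F_{\Sigma,\{u\}}(Y)\sim\tfrac12 C_{\Sigma}Y$. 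Substituting $Y=Y_?$ and the constants $c_?$ yields the four stated asymptotics.

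I expect the main obstacle to be the reduction step, namely the representation-counting and the modulo-$8$ analysis that determines $U_?$ and the constants $c_?$ (together with the $B\leftrightarrow C$ subtlety isolating case (i)): this is what forces the exact powers of $2$ and the correct residue sets. A secondary technical issue is the quality of the error term, which also explains why it differs across cases. For $U=\{1,5\}$ (cases (i) and (ii)) one can apply the Wirsing--Odoni method directly to the nonnegative multiplicative function $f_\Sigma$, obtaining $O\!\bigl(Y/(\log Y)^{1-\epsilon}\bigr)=O\!\bigl(X^{1/3}/(\log X)^{1-\epsilon}\bigr)$; for singleton $U$ (cases (iii) and (iv)/(v)) the $\chi_8$-twisted sum is governed only by the ineffective Wiener--Ikehara theorem, so the best one obtains there is $o(X^{1/3})$.
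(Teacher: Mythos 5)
Your proposal retraces the paper's proof almost step for step: the same reduction of $N^{?,\pm}_A(X)$ to counting representations $D=B^2+C^2$ subject to the case conditions of the parametrization in \cite{HHRWH}, the same mod-$8$ observation (this is precisely Lemma~\ref{lem:QD_with_cong}), the same passage to the sums $F_{\Sigma_A,U}$, and the same analytic split: Wirsing--Odoni applied to the nonnegative multiplicative function $f_{\Sigma_A}$ when $U=\{1,5\}$, which is exactly where the $O\!\left(X^{1/3}/(\log X)^{1-\epsilon}\right)$ errors in cases (i) and (ii) come from, and a factorization $L(s,f_{\Sigma})=\zeta(s)L(s,\chi)H(s)$ together with its character twist and the Wiener--Ikehara theorem when $U$ is a singleton, which is why the remaining cases only get $o(X^{1/3})$. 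Your characters $\chi_4$ and $\chi_8$ are the paper's $\chi_5$ and $\chi_7$ modulo $8$, and your Euler factors for $H$ agree with the paper's $h_{\Sigma_0}(p^k)$; these differences are purely notational.

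There is, however, one genuine gap, in your justification of the factor $\tfrac12$ in case (i). For odd $D$ your argument is sound: exactly one of $B,C$ is odd, so imposing the parity of $B$ selects exactly one ordered representative from each unordered representation. But for even $D$ both orderings $(A,B,C,D)$ and $(A,C,B,D)$ are admissible tuples lying in case (i), and your claim that the swap $B\leftrightarrow C$ ``preserves the case'' and hence identifies fields contradicts the uniqueness of the parametrization on which your whole reduction rests: distinct admissible tuples give non-isomorphic fields. Concretely, one has the identity $(D+B\sqrt D)(D+C\sqrt D)=\tfrac{D}{2}\bigl((B+C)+\sqrt D\bigr)^2$, so the two radicands differ, modulo squares in $\QQ(\sqrt D)$, by $2D$, which is not a square there for squarefree $D\neq2$; for instance $\QQ\bigl(\sqrt{10+\sqrt{10}}\bigr)$ and $\QQ\bigl(\sqrt{10+3\sqrt{10}}\bigr)$ are distinct totally real $C_4$-fields, both with $A=1$, $D=10$, the same discriminant $2^{11}5^3$, and the same shape. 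Thus the swap does not identify fields, and counting ordered pairs --- as the bijection between fields and tuples demands --- yields $2Q(D)$ fields per even $D$, i.e.\ no factor $\tfrac12$ and a case-(i) constant $C_{\Sigma_A}/(2^{11}A^2)^{1/3}$, twice what you (and the theorem) state. Your formula agrees with the paper's only because the paper, too, counts unordered representations $Q(D)$ at this point, and it does so without invoking any symmetry; so either there is an identification of fields that neither you nor the paper supplies (the example above appears to rule this out), or the case-(i) constant must be doubled. Either way, the symmetry argument as you state it cannot serve as the justification, and this step needs to be repaired.
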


\begin{remark}
	As alluded to in the introduction, these counts present arithmetic behaviour that is not compatible with being well-behaved with respect to a measure inherited by a $G$-action for any (non-trivial) subgroup $G$ of $\gl_n(\RR)$. Indeed, within a given case, if, e.g., $A_2=pA_1$, for some prime $p\nmid A_1$, the proportion of fields with shape given by $A_1$ versus shape given by $A_2$ is
	\[
		\begin{cases}
			p^{2/3}+\dfrac{2}{p^{1/3}}	& p\equiv1\mod{4}\\
			p^{2/3}				& p\equiv3\mod{4}.
		\end{cases}
	\]
	Since the parameter $A$ is a scaling parameter, an invariant measure would require that this proportion not depend on the congruence class of $p$ modulo $4$.
\end{remark}

We reduce the asymptotics of these functions to asymptotics of the functions $F_{\s,U}(Y)$ of the previous section case-by-case. First, let us note that basically what we are trying to do comes down to counting how many ways a given $D$ can be written as a sum of two squares. Let us briefly recall what is known about this. Let $Q(D)$ denote the number of ways of writing $D$ as a sum of two squares $B^2+C^2$ without regard to the order or signs of $B$ and $C$. It has been known for quite some time that when $D$ is squarefree and not divisible by any primes that are $3\mod{4}$, we have that
\[
	Q(D)=\begin{cases}
			2^{\omega(D)-2}&D\text{ even}\\
			2^{\omega(D)-1}&D\text{ odd}.
		\end{cases}
\]
For the cases (iii) and $\nr$, we will need the following lemma.
\begin{lemma}\label{lem:QD_with_cong}
	Let $D\in\ZZ_{\geq1}$ be odd. Then, $D$ can be written as $B^2+C^2$ with ${B\equiv 0\mod{4}}$ if and only if $D\equiv1\mod{8}$. In particular, if $D$ can be written in this way, then all ways of writing $D$ as a sum of two squares have $B$ (or $C$) $\equiv0\mod{4}$.
\end{lemma}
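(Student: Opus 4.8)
The plan is to reduce the whole statement to a single congruence computation modulo $8$, the point being that the parity structure of a representation $D=B^2+C^2$ of an odd number is rigidly controlled by $D\mod{8}$.

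First I would observe that, since $D$ is odd, exactly one of $B$ and $C$ is even in any representation $D=B^2+C^2$; say $B$ is even and $C$ is odd. (This is the only point needing a word of care, as it is what legitimizes treating $B$ and $C$ asymmetrically.) I would then record the two elementary facts: the square of an odd integer is $\equiv1\mod{8}$, and the square of an even integer is $\equiv0\mod{8}$ when that integer is divisible by $4$ but $\equiv4\mod{8}$ when it is $\equiv2\mod{4}$. Combining these gives $D\equiv B^2+1\mod{8}$, whence the clean dichotomy: $B\equiv0\mod{4}$ forces $D\equiv1\mod{8}$, while $B\equiv2\mod{4}$ forces $D\equiv5\mod{8}$.

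This dichotomy immediately yields the biconditional. The forward implication is the first half of the dichotomy applied to a representation with $B\equiv0\mod{4}$. For the converse, the relevant $D$ are those expressible as a sum of two squares (the only case arising from the parametrization, and the only case in which the statement has content); given such a $D$ with $D\equiv1\mod{8}$, the contrapositive of the second half of the dichotomy rules out $B\equiv2\mod{4}$ in every representation, so the even member of any representation is $\equiv0\mod{4}$, and taking any representation produces the desired one. The same observation gives the ``in particular'' clause at once: for a fixed odd $D$ the residue modulo $4$ of the even term is determined solely by $D\mod{8}$ (it is $0$ exactly when $D\equiv1\mod{8}$), independently of the representation chosen, so if one representation has its even term divisible by $4$ then so does every representation.

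The computation carries no genuine obstacle; the only subtlety worth flagging is the implicit restriction to $D$ expressible as a sum of two squares in the ``if'' direction, which is harmless since those are precisely the $D$ arising from the parametrization in \S\ref{sec:C4fields_description}.
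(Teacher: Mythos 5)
Your proof is correct and follows essentially the same route as the paper's: both rest on the mod $8$ dichotomy that an odd representation $D=B^2+C^2$ (with $B$ even, $C$ odd) satisfies $D\equiv1\mod{8}$ when $B\equiv0\mod{4}$ and $D\equiv5\mod{8}$ when $B\equiv2\mod{4}$, with the converse obtained by contrapositive. Your explicit flagging of the implicit restriction to $D$ actually expressible as a sum of two squares is a nice touch of care that the paper's own proof leaves tacit, but it is not a different argument.
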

\begin{proof}
	If $D=B^2+C^2$ with $B\equiv0\mod{4}$, then $D\equiv1\mod{8}$ since $1$ is the only odd square modulo $8$. Conversely, since $D$ is odd, exactly one of $B$ or $C$ is even, so that if $D$ can't be written as $B^2+C^2$ with $B\equiv0\mod{4}$, then it must be that $D$ can be written as $B^2+C^2$ with $B\equiv2\mod{4}$ and $C$ odd. But then $B=2B^\prime$, with $B^\prime$ odd and
	\begin{align*}
		D&=(2B^\prime)^2+C^2\\
		&\equiv4\cdot1+1\mod{8}\\
		&\equiv5\mod{8}.
	\end{align*}
\end{proof}

We now proceed case-by-case to relate asymptotics of $N^{?,\pm}_A(X)$ to those of $F_{\s_A}(Y)$.

Case (i): $D$ even. We have that
\begin{align}
	N^{(\mathrm{i}),\pm}_A(X)&=\sum_{\substack{2\leq D\leq\left(\frac{X}{2^8A^2}\right)^{1/3}\\ D\text{ squarefree}\\ (D/2,\s_A)=1\\D\text{ even} }} Q(D)\\
		&=\sum_{\substack{1\leq D^\prime\leq\frac{1}{2}\left(\frac{X}{2^8A^2}\right)^{1/3}\\ D^\prime\text{ squarefree}\\ (D^\prime,\s_A)=1\\D^\prime\text{ odd} }} 2^{\omega(D^\prime)-1}\\
		&=\frac{1}{2}F_{\s_A}\!\left(\frac{1}{2}\left(\frac{X}{2^8A^2}\right)^{1/3}\right).
\end{align}

Case (ii): $D,B$ odd. Note that if $D=B^2+C^2$ is odd, then exactly one of $B$ or $C$ is odd, so that we are again counting the appropriate $D$ with multiplicity $Q(D)$, i.e.
\begin{align}
	N^{(\mathrm{ii}),\pm}_A(X)&=\sum_{\substack{1\leq D\leq\left(\frac{X}{2^6A^2}\right)^{1/3}\\ D\text{ squarefree}\\ (D,\s_A)=1 }} Q(D)\\
		&=\sum_{\substack{1\leq D\leq\left(\frac{X}{2^6A^2}\right)^{1/3}\\ D\text{ squarefree}\\ (D,\s_A)=1 }} 2^{\omega(D)-1}\\
		&=\frac{1}{2}F_{\s_A}\!\left(\left(\frac{X}{2^6A^2}\right)^{1/3}\right).
\end{align}

Case (iii): $D$ odd, $A+B\equiv3\mod{4}$. In this case, $B$ is required not only to be even, but to satisfy a congruence condition modulo $4$. If $A\equiv3\mod{4}$, then $B\equiv0\mod{4}$, and if $A\equiv1\mod{4}$, then $B\equiv2\mod{4}$. In view of Lemma~\ref{lem:QD_with_cong}, let $U=\{1\}$ or $\{5\}$ according to whether $A$ is $3$ or $1\mod{4}$, so that for a given choice of $A$, we must count those $D$ whose congruence class module 8 is in $U$, i.e
\begin{align}
	N^{(\mathrm{iii}),\pm}_A(X)&=\sum_{\substack{1\leq D\leq\left(\frac{X}{2^4A^2}\right)^{1/3}\\ D\text{ squarefree}\\ (D,\s_A)=1\\D\mod{8}\,\in\, U }} Q(D)\\
		&=\sum_{\substack{1\leq D\leq\left(\frac{X}{2^4A^2}\right)^{1/3}\\ D\text{ squarefree}\\ (D,\s_A)=1\\D\mod{8}\,\in\, U }} 2^{\omega(D)-1}\\
		&=\frac{1}{2}F_{\s_A,U}\!\left(\left(\frac{X}{2^4A^2}\right)^{1/3}\right).
\end{align}

Case $\nr$: $2$ unramified in $K$. In this case, $D$ is odd and $A+B\equiv1\mod{4}$. As such, the answer is along the same lines as in case (iii), but with the opposite choice of $U$, i.e.\ $U=\{1\}$ or $\{5\}$ according to whether $A$ is $1$ or $3\mod{4}$. Then,
\begin{align}
	N^{(\mathrm{\nr}),\pm}_A(X)&=\sum_{\substack{1\leq D\leq\left(\frac{X}{A^2}\right)^{1/3}\\ D\text{ squarefree}\\ (D,\s_A)=1\\D\mod{8}\,\in\, U }} Q(D)\\
		&=\sum_{\substack{1\leq D\leq\left(\frac{X}{A^2}\right)^{1/3}\\ D\text{ squarefree}\\ (D,\s_A)=1\\D\mod{8}\,\in\, U }} 2^{\omega(D)-1}\\
		&=\frac{1}{2}F_{\s_A,U}\!\left(\left(\frac{X}{A^2}\right)^{1/3}\right).
\end{align}

\subsection{Asymptotics of some arithmetic functions}
In this section, we will use the Wirsing--Odoni method and Wiener--Ikehara Tauberian Theorem to obtain asymptotics for $F_{\s,U}(Y)$. We will need the following lemma on the absolute convergence of Dirichlet series of certain multiplicative functions.
\begin{lemma}\label{lem:convergence}
	Suppose $h(n)$ is a multiplicative function satisfying the following three properties for all primes $p$ and all positive integers $k$:
	\begin{enumerate}
		\item there is an $M\geq1$ such that $|h(p^k)|\leq M$;
		\item there is an integer $K>0$ such that $h(p^k)=0$ for all $k>K$;
		\item $h(p)=0$.
	\end{enumerate}
	Then, the Dirichlet series
	\[
		H(s)=\sum_{n\geq1}\frac{h(n)}{n^s}
	\]
	converges absolutely for $\Re(s)>\frac{K-1}{K}$.
\end{lemma}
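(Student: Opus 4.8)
The plan is to pass to the Euler product of the non-negative multiplicative function $g(n):=|h(n)|n^{-\sigma}$, where $\sigma=\Re(s)$. Since $h$ is multiplicative, so is $|h|$, and hence so is $g$; moreover $g\geq0$. For such a function the standard Euler-product identity
\[
	\sum_{n\geq1}g(n)=\prod_p\left(\sum_{k\geq0}g(p^k)\right)
\]
holds in $[0,+\infty]$, the left side being finite exactly when the right side is. Thus absolute convergence of $H(s)$ at a given $s$ is equivalent to convergence of the product $\prod_p L_p$, where $L_p:=\sum_{k\geq0}|h(p^k)|p^{-k\sigma}$. So first I would reduce the whole question to estimating these local factors.

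Next I would compute $L_p$ using the three hypotheses. Because $h(1)=1$, the $k=0$ term is $1$; because $h(p)=0$, the $k=1$ term vanishes; and because $h(p^k)=0$ for $k>K$, all terms with $k>K$ vanish. Hence
\[
	L_p=1+\sum_{k=2}^{K}|h(p^k)|p^{-k\sigma}\leq 1+M\sum_{k=2}^{K}p^{-k\sigma}\leq 1+M(K-1)p^{-2\sigma},
\]
the last step using that $p^{-k\sigma}\leq p^{-2\sigma}$ for every $k\geq2$ when $\sigma>0$. (When $K=1$ the middle sum is empty, so $L_p=1$ and $H(s)\equiv1$ converges for all $s$, matching $\tfrac{K-1}{K}=0$.)

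Finally, since each $L_p\geq1$, the product $\prod_p L_p$ converges if and only if $\sum_p(L_p-1)$ converges, and by the bound above
\[
	\sum_p(L_p-1)\leq M(K-1)\sum_p p^{-2\sigma},
\]
which is finite as soon as $2\sigma>1$, i.e.\ $\sigma>\tfrac12$. Thus $H(s)$ converges absolutely for $\Re(s)>\tfrac12$. To conclude the stated lemma I would simply note that for $K\geq2$ one has $\tfrac{K-1}{K}\geq\tfrac12$, so the half-plane $\Re(s)>\tfrac{K-1}{K}$ is contained in $\Re(s)>\tfrac12$, while the $K=1$ case was handled above.

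The argument is essentially routine; the only point requiring a little care is the first reduction, namely the justification of the Euler-product identity for $g$ in the extended sense (finiteness of the sum being equivalent to finiteness of the product), which is where the non-negativity and multiplicativity of $|h(n)|n^{-\sigma}$ are used. The one genuinely noteworthy feature is that the leading local contribution is $p^{-2\sigma}$, thanks to hypothesis (3) killing the $p^{-\sigma}$ term; consequently the bound $\tfrac{K-1}{K}$ in the statement is valid but not sharp for $K\geq3$, and no work beyond observing $\tfrac{K-1}{K}\geq\tfrac12$ is needed to obtain it.
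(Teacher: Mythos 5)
Your proof is correct, and it takes a genuinely different route from the paper's. The paper never forms an Euler product: instead it observes that hypothesis (c) confines the support of $h$ to squarefull integers $n=\prod_{i}p_i^{e_i}$ (all $e_i\geq2$), sets up the bijection $n\mapsto n'=\prod_i p_i^{e_i-1}$ between these and the integers $n'>1$, bounds $|h(n)|\leq M^{\omega(n)}=O\!\left((n')^{\epsilon/2}\right)$, and uses $e_i\leq K$ (hypothesis (b)) to obtain $n^{\sigma}\geq (n')^{1+\epsilon}$ when $\sigma=\frac{K-1}{K}+\epsilon$, so that the series is dominated by $\sum_{n'}(n')^{-1-\epsilon/2}<\infty$. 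Your argument instead exploits multiplicativity through the Euler product of the nonnegative function $|h(n)|n^{-\sigma}$ and estimates the local factors, where hypothesis (c) kills the $p^{-\sigma}$ term. What this buys is a strictly stronger, and in fact sharp, conclusion: absolute convergence on $\Re(s)>\frac12$ independently of $K$ (sharp, as the example $h(p^2)=1$ and $h(p^k)=0$ otherwise shows); moreover hypothesis (b) becomes superfluous in your approach, since without it one still has $L_p\leq 1+Mp^{-2\sigma}/(1-p^{-\sigma})=1+O(p^{-2\sigma})$ uniformly in $p$ on $\sigma>\frac12$. The paper's exponent $\frac{K-1}{K}$ is an artifact of its comparison $n^{\sigma}\geq(n')^{1+\epsilon}$, which degrades as the exponents $e_i$ approach $K$; since $\frac{K-1}{K}\geq\frac12$ for $K\geq2$ and the case $K=1$ is trivial (as you note), your stronger statement implies the lemma as stated, and it also suffices for the paper's application, which invokes the lemma with $K=3$. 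The only step requiring care is the one you flag yourself: the identity $\sum_{n\geq1}g(n)=\prod_p\sum_{k\geq0}g(p^k)$ in $[0,+\infty]$ for nonnegative multiplicative $g$, which follows by monotone limits of finite subsums and subproducts, both inequalities being routine.
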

\begin{proof}	
	Given
	\[
		n=\prod_{i=1}^rp_i^{e_i}
	\] for distinct primes $p_i$ and $e_i\in\ZZ_{\geq2}$, let
	\[
		n^\prime=\prod_{i=1}^rp_i^{e_i-1}>1.
	\]
	This gives a bijection between squarefull $n$ and integers $n^\prime>1$. We have that $|h(n)|,|h(n^\prime)|\leq M^r$. Let $\epsilon>0$. For sufficiently large $n^\prime$,
	\[r\leq \log_{M}({n^\prime}^{\epsilon/2}),
	\]
	so that
	\[
		|h(n^\prime)|=O({n^\prime}^{\epsilon/2}).
	\]
	If $n$ is such that $e_i\leq K$ for all $i$, then
	\[
		(e_i-1)\leq e_i\frac{K-1}{K},
	\]
	so that if $\sigma=\frac{K-1}{K}+\epsilon$, then
	\[
		n^\sigma=\prod_{i=1}^rp_i^{e_i\sigma}=\prod_{i=1}^rp_i^{e_i(K-1)/K+e_i\epsilon}\geq \prod_{i=1}^rp_i^{e_i-1+(e_i-1)\epsilon}={n^{\prime}}^{1+\epsilon}
	\]
	We therefore obtain
	\[
		\sum_{n\geq1}\frac{|h(n)|}{n^\sigma}\leq\sum_{n^\prime\geq1}\frac{|h(n^\prime)|}{{n^\prime}^{1+\epsilon}}=O\left(\sum_{n^\prime\geq1}\frac{1}{{n^\prime}^{1+\epsilon/2}}\right)<\infty.
	\]

\end{proof}
Throughout this section, for $j=3,5,7$, we let $\chi_j$ denote the (unique) Dirichlet character modulo $8$ whose kernel is generated by $j$ mod $8$. We heartily thank Robert Lemke Oliver for pointing us to the following wonderfully simple approach using the Wirsing--Odoni method!

\begin{proposition}
	With the notation of \S\ref{sec:C4_dist_notation}, we have that, for all $\epsilon>0$,
	\begin{equation}
		F_{\s}(Y)=C_\s Y+O(Y/(\log Y)^{1-\epsilon})
	\end{equation}
	where
	\begin{equation}
		C_\s=\prod_p\left(1+\frac{f_\s(p)}{p}\right)\left(1-\frac{1}{p}\right).
	\end{equation}
\end{proposition}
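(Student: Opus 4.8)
The plan is to study the Dirichlet series $L(s,f_\s):=\sum_{n\ge1}f_\s(n)n^{-s}$ and to extract the partial sums $F_\s(Y)$ by the Wirsing--Odoni method. Since $f_\s$ is multiplicative with $f_\s(p)=2$ when $p\equiv1\mod{4}$ and $p\nmid\s_1$, with $f_\s(p)=0$ otherwise, and $f_\s(p^k)=0$ for all $k\ge2$ (as $f_\s$ is supported on squarefree integers), its Euler product is
\[
	L(s,f_\s)=\prod_{\substack{p\equiv1\mod{4}\\p\nmid\s_1}}\left(1+\frac{2}{p^s}\right),
\]
which converges for $\Re(s)>1$. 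First I would isolate the behaviour near $s=1$ by comparing against $\zeta(s)L(s,\chi_5)$, where $\chi_5$ is the character modulo $8$ from \S\ref{sec:C4_dist_notation}, so that $\chi_5(p)=1$ exactly when $p\equiv1\mod{4}$, $\chi_5(p)=-1$ when $p\equiv3\mod{4}$, and $\chi_5(2)=0$.

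The exponents here are forced: matching the $p^{-s}$--coefficient of $1+2p^{-s}$ requires the exponent of $\zeta$ plus that of $L(\cdot,\chi_5)$ to be $2$ at $p\equiv1\mod{4}$, while the vanishing of $f_\s$ at $p\equiv3\mod{4}$ forces those two exponents to be equal; hence both are $1$. Setting $G(s):=L(s,f_\s)\big/\bigl(\zeta(s)L(s,\chi_5)\bigr)$ and matching local factors prime by prime, $G$ has Euler factor $(1+2p^{-s})(1-p^{-s})^2=1-3p^{-2s}+2p^{-3s}$ at each $p\equiv1\mod{4}$ with $p\nmid\s_1$, factor $1-p^{-2s}$ at each $p\equiv3\mod{4}$, the single entire factor $1-2^{-s}$ at $p=2$, and finitely many entire factors $(1-p^{-s})^2$ at $p\in\s_1$. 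After pulling out the finitely many entire factors (at $2$ and at the primes dividing $\s_1$), the remaining product has $h(p)=0$ for every prime and satisfies the hypotheses of Lemma~\ref{lem:convergence} with $K=3$, so it converges absolutely for $\Re(s)>2/3$. Thus $G$ is holomorphic and nonvanishing in a neighborhood of $s=1$, and the identity $L(s,f_\s)=\zeta(s)L(s,\chi_5)G(s)$ provides the meromorphic continuation of $L(s,f_\s)$ past the line $\Re(s)=1$, with a single simple pole at $s=1$ coming from $\zeta$.

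Next I would compute the residue. Since $\zeta$ has residue $1$ at $s=1$ while $L(s,\chi_5)$ is entire and $G$ is holomorphic there, the residue of $L(s,f_\s)$ at $s=1$ is $L(1,\chi_5)G(1)$, which is nonzero by the nonvanishing $L(1,\chi_5)\neq0$ and positivity of the convergent product $G(1)$. A routine rearrangement of Euler products—collecting, for each prime, the contributions of $L(1,\chi_5)$ and $G(1)$ and using $(1-3p^{-2}+2p^{-3})(1-p^{-1})^{-1}=(1+2p^{-1})(1-p^{-1})$ at $p\equiv1\mod{4}$ and $(1+p^{-1})^{-1}(1-p^{-2})=(1-p^{-1})$ at $p\equiv3\mod{4}$—shows that this residue equals
\[
	\prod_{\substack{p\equiv1\mod{4}\\p\nmid\s_1}}\left(1+\tfrac{2}{p}\right)\left(1-\tfrac1p\right)\cdot\prod_{p\in\s}\left(1-\tfrac1p\right)=\prod_p\left(1+\frac{f_\s(p)}{p}\right)\left(1-\frac1p\right)=C_\s,
\]
so the pole is simple with residue exactly $C_\s$.

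Finally, with the simple pole of residue $C_\s$ and the holomorphy of $G$ for $\Re(s)>2/3$ in hand, I would invoke the effective Tauberian theorem underlying the Wirsing--Odoni method: feeding in the analytic continuation of $\zeta$ and $L(s,\chi_5)$ together with their classical zero-free regions $\sigma>1-c/\log(|t|+2)$ and polynomial growth on vertical lines, a contour shift in Perron's formula yields $F_\s(Y)=C_\s Y+O\!\left(Y/(\log Y)^{1-\epsilon}\right)$ for every $\epsilon>0$. I expect this last step to be the main obstacle: whereas the Wiener--Ikehara theorem alone would deliver only $F_\s(Y)=C_\s Y+o(Y)$, obtaining the power-of-log savings requires controlling $L(s,f_\s)$ just to the left of $\Re(s)=1$, i.e.\ verifying the growth bounds and exploiting the zero-free regions of \emph{both} $\zeta$ and $L(s,\chi_5)$. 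By contrast, the algebraic steps—the factorization and the residue identification with $C_\s$—are routine Euler-product manipulations.
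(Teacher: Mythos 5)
Your proposal is correct, but it proves the proposition by a genuinely different route than the paper. The paper stays entirely on the elementary side: it feeds the multiplicative function $f_\Sigma$ directly into the Wirsing--Odoni mean-value theorem as packaged in \cite[Proposition~4]{FMS}, whose only nontrivial hypothesis is the prime-sum asymptotic $\sum_{p<X}f_\Sigma(p)=\tfrac{X}{\log X}+O\!\left(\tfrac{X}{(\log X)^{1+\beta}}\right)$, verified via Dirichlet's theorem together with Siegel--Walfisz; the Euler product $C_\Sigma$ and the error $O(Y/(\log Y)^{1-\epsilon})$ then come out of that black box, with no analytic continuation of any Dirichlet series. You instead factor $L(s,f_\Sigma)=\zeta(s)L(s,\chi_5)G(s)$, continue $G$ to $\Re(s)>2/3$ via Lemma~\ref{lem:convergence}, identify the residue $L(1,\chi_5)G(1)=C_\Sigma$ (your Euler-factor manipulations and the residue identity are all correct), and finish by Perron's formula with a contour shift; this factorization is in fact the very one the paper uses in the \emph{next} proposition for the non-multiplicative sums $F_{\Sigma,\{1\}}$ and $F_{\Sigma,\{5\}}$, where it settles for Wiener--Ikehara and hence only an $o(Y)$ error. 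Two corrections to your final step, neither of which breaks the argument. First, the Wirsing--Odoni method is not ``an effective Tauberian theorem'' proved by contour shifting --- it is an elementary convolution argument --- so what you are really invoking is the classical Perron/Landau contour-shift machinery. Second, the zero-free regions you flag as the main obstacle are not needed at all: $\zeta(s)$ and $L(s,\chi_5)$ enter your identity as \emph{factors}, not divisors, and the exponent of $\zeta$ is exactly $1$, so there is no Selberg--Delange branch issue and zeros of $\zeta$ or $L(\cdot,\chi_5)$ cost nothing; the contour shift only requires polynomial growth (convexity bounds) of $\zeta$ and $L(\cdot,\chi_5)$ on vertical lines together with the absolute convergence (hence boundedness) of $G$ in $\Re(s)>2/3+\epsilon$. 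Carried out this way, your method actually yields a power saving $O(Y^{1-\delta})$ for some $\delta>0$, which is strictly stronger than the stated $O(Y/(\log Y)^{1-\epsilon})$ and beyond what the paper's elementary approach can produce.
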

\begin{proof}
	We use the Wirsing--Odoni method as laid out in \cite[Proposition~4]{FMS}.
	The first stipulation of the Wirsing--Odoni method is that it applies to multiplicative functions of which $f_\s(n)$ is an example. Next, since
	\[
		0\leq f_{\s}(p^r)\leq2
	\]
	for all prime powers $p^r$, we may take $u=2$ and $v=0$ in \cite[Proposition~4]{FMS}. Finally, we must find real numbers $\xi>0$ and $0<\beta<1$ such that
	\[
		\sum_{p<X}f_{\s}(p)=\xi\frac{X}{\log X}+O\!\left(\frac{X}{(\log X)^{1+\beta}}\right).
	\]
	The left-hand side is simply $2$ times the sum of all primes less than $X$ that are not in $\s$. The condition of not being in $\s$ only excludes finitely many primes beyond the congruence condition of being $1$ modulo $4$, so that all we need is Dirichlet's theorem on primes in arithmetic progressions, as well as the Siegel--Walfisz Theorem (see e.g.\ \cite[Corollary~5.29]{Iwaniec-Kowalski}) for the error term, to conclude that we may take any $\beta\in(0,1)$ and $\xi=1$. Plugging these numbers into the conclusion of \cite[Proposition~4]{FMS} yields the stated result.
\end{proof}

To deal with $F_{\s,\{1\}}(Y)$ and $F_{\s,\{5\}}(Y)$, we will use the Wiener--Ikehara Tauberian Theorem as in \cite[Exercise~3.3.3]{Murty}.
\begin{proposition}
	If $U=\{1\}$ or $\{5\}$, then
	\begin{equation}
		F_{\s,U}(Y)=\frac{1}{2}C_\s Y+o(Y)
	\end{equation}
\end{proposition}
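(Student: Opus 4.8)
The plan is to locate the rightmost pole of the Dirichlet series $L(s,f_{\s,U})=\sum_{n\geq1}f_{\s,U}(n)n^{-s}$ and feed this into the Wiener--Ikehara Tauberian Theorem, whose hypotheses are available because $f_{\s,U}(n)\geq0$. The essential structural fact is that $f_\s(n)$ vanishes unless $n$ is a squarefree product of primes $\equiv1\pmod4$, and every such $n$ satisfies $n\equiv1$ or $5\pmod8$; thus $f_\s$ is supported precisely on the two residue classes making up $\{1,5\}$, and $f_{\s,\{u\}}(n)=f_\s(n)\cdot\mathbf{1}_{n\equiv u\,(8)}$ for $u\in\{1,5\}$. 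Writing $\chi_0$ for the principal character modulo $8$ and using orthogonality of the characters modulo $8$, I would first record the identity
\[
	L(s,f_{\s,U})=\frac14\sum_{\chi}\overline{\chi(u)}\,L(s,f_\s\chi),\qquad L(s,f_\s\chi):=\sum_{n\geq1}\frac{f_\s(n)\chi(n)}{n^s},
\]
the sum running over the four characters $\chi\in\{\chi_0,\chi_3,\chi_5,\chi_7\}$ modulo $8$, with $U=\{u\}$.

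Next I would analyze each twisted series by its Euler product. Since $f_\s(p)=2$ exactly when $p\equiv1\pmod4$ and $(p,\s)=1$, while $f_\s(p^k)=0$ for $k\geq2$, one has $L(s,f_\s\chi)=\prod_{p\equiv1(4),\,(p,\s)=1}\bigl(1+2\chi(p)p^{-s}\bigr)$. Comparing local factors (using $\chi_5(p)=1$ for $p\equiv1\pmod4$ and $\chi_5(p)=-1$ for $p\equiv3\pmod4$, together with $\chi_5^2=\chi_0$) yields a factorization
\[
	L(s,f_\s\chi)=L(s,\chi)\,L(s,\chi\chi_5)\,R_\chi(s),
\]
where $R_\chi(s)$ is an Euler product that converges absolutely, hence is holomorphic, for $\Re(s)>\tfrac12$. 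The only source of a pole at $s=1$ is a principal Dirichlet $L$-factor, and $\chi$ or $\chi\chi_5$ is principal precisely when $\chi\in\{\chi_0,\chi_5\}$; for these two characters $L(s,f_\s\chi)$ has a simple pole at $s=1$, whereas for $\chi\in\{\chi_3,\chi_7\}$ both $\chi$ and $\chi\chi_5$ (which equal $\chi_7,\chi_3$) are nontrivial, so the series is holomorphic at $s=1$. Because $\chi_0$ and $\chi_5$ are both identically $1$ on the support $\{1,5\}$ of $f_\s$, we have $f_\s\chi_0=f_\s\chi_5=f_\s$, so each of these poles has residue $C_\s$, recovering the residue implicit in the previous proposition. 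Substituting into the displayed identity and using $\chi_0(u)=\chi_5(u)=1$ for $u\in\{1,5\}$ shows that $L(s,f_{\s,U})$ has a simple pole at $s=1$ with residue $\tfrac14(C_\s+C_\s)=\tfrac12C_\s$.

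Finally, I would apply Wiener--Ikehara as in \cite[Exercise~3.3.3]{Murty}. Its hypothesis requires that $L(s,f_{\s,U})-\tfrac{C_\s/2}{s-1}$ extend continuously to the closed half-plane $\Re(s)\geq1$; this follows from the factorization above together with the classical non-vanishing of $\zeta(s)$ and of the Dirichlet $L$-functions $L(s,\chi)$ on the line $\Re(s)=1$, which rules out any further singularity on that line. The theorem then yields $F_{\s,U}(Y)=\tfrac12C_\s Y+o(Y)$, as claimed. The main obstacle — indeed the entire content of the statement — is showing that the imbalance $F_{\s,\{1\}}-F_{\s,\{5\}}$ is of lower order; analytically this is exactly the assertion that the twists by the sign-changing characters $\chi_3$ and $\chi_7$ contribute no pole at $s=1$, which the factorization makes transparent. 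Since those twisted coefficient sequences change sign, it is cleaner to apply the Tauberian theorem directly to the nonnegative functions $f_{\s,\{1\}}$ and $f_{\s,\{5\}}$ rather than to $f_\s\chi_3$, and that is the route I would take.
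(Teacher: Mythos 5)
Your proof is correct and is essentially the paper's argument: both use orthogonality of the Dirichlet characters modulo $8$ to reduce to the twisted series $L(s,f_\Sigma\chi)$, factor these as Dirichlet $L$-functions times a factor holomorphic for $\Re(s)>\tfrac12$ (the paper obtains this factor via Dirichlet convolution with $\mu\chi_3\ast\mu\chi_7$ and a convergence lemma, you via the Euler product directly---the same computation), and conclude with the Wiener--Ikehara theorem, where your choice to apply it to the nonnegative function $f_{\Sigma,U}$ rather than to the sign-changing twist $f_\Sigma\chi_3$ is exactly the careful reading of the paper's slightly loose phrasing. One small correction: the non-vanishing of $\zeta(s)$ and $L(s,\chi)$ on the line $\Re(s)=1$ plays no role in continuing $L(s,f_{\Sigma,U})-\frac{C_\Sigma/2}{s-1}$ to $\Re(s)\geq1$, since zeros of the factors in your factorization cannot create singularities of the product; holomorphy of the factors alone suffices.
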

\begin{proof}
	For $U=\{1\}$ or $\{5\}$, $f_{\s,U}(n)$ is not multiplicative so the Wirsing--Odoni method does not apply. Since
	\[
		F_\s(Y)=F_{\s,\{1\}}(Y)+F_{\s,\{5\}}(Y),
	\]
	it suffices to prove the result for $F_{\s,\{1\}}(Y)$. By elementary mathematics (or the orthogonality of Dirichlet characters, if you're not into that whole brevity thing), we have that
	\begin{equation}
		F_{\s,\{1\}}(Y)=\frac{1}{2}\left(F_\s(Y)+\sum_{1\leq n\leq Y}\chi_3(n)f_\s(n)\right).
	\end{equation}
	We therefore concentrate on the Dirichlet series $L(s,f_{\s,\chi_3})$, where $f_{\s,\chi_3}(n)=\chi_3(n)f_\s(n)$. It suffices to show that
	\[ \sum_{1\leq n\leq Y}\chi_3(n)f_\s(n)=o(Y).
	\]
	Since $|f_{\s,\chi_3}(n)|\leq f_\s(n)$, we first study $L(s,f_\s)$.
	
	Let
	\[
		H_{\s_0}(s):=L(s,f_{\s_0})\big(\zeta(s)L(s,\chi_5)\big)^{-1}=\sum_{n\geq1}\frac{h_{\s_0}(n)}{n^s},
	\]
	where $h_{\s_0}=f_{\s_0}\ast\mu\ast\mu\chi_5$, where $\ast$ denotes Dirichlet convolution. Then, $h_{\s_0}(n)$ is multiplicative and
	\[
		h_{\s_0}(p^k)=\begin{cases}
					1&\text{if }k=0\\
					-3&\text{if }k=2\text{ and }p\equiv1\mod{4}\\
					-1&\text{if }k=2\text{ and }p\not\equiv1\mod{4}\\
					2&\text{if }k=3\text{ and }p\equiv1\mod{4}\\
					0&\text{otherwise.}
				\end{cases}
	\]
	Therefore, by the previous lemma, the Dirichlet series for $H_{\s_0}(s)$ converges absolutely for $\Re(s)>2/3$ and defines $H_{\s_0}(s)$ as a non-zero analytic function in that region. Letting
	\[
		H_\s(s):=H_{\s_0}(s)\cdot\prod_{p\in\s\setminus\s_0}\left(1+2p^{-s}\right)^{-1},
	\]
	we obtain the factorization
	\begin{equation}\label{eqn:Lsfs_factorization}
		L(s,f_\s)=\zeta(s)L(s,\chi_5)H_\s(s).
	\end{equation}
	Since $\zeta(s)$ and $L(s,\chi_5)$ (and $H_\s(s)$) converge absolutely for $\Re(s)>1$, so does $L(s,f_\s)$. Furthermore, $L(s,\chi_5)$ has analytic continuation to the entire complex plane and $L(1,\chi_5)\neq0$, so that $L(s,f_\s)$ has meromorphic continuation to $\Re(s)>2/3$ with a simple pole at $s=1$. In order to apply the Wiener--Ikehara Tauberian theorem to $f_{\s,\chi_3}$ and obtain our result, it now suffices to show that $L(s,f_{\s,\chi_3})$ converges absolutely for $\Re(s)>1$ and extends to an analytic function on $\Re(s)\geq1$.
	
	We proceed along the same lines as the previous paragraph, letting
	\[
		H_{\s_0,\chi_3}(s):=L(s,f_{\s_0,\chi_3})\big(L(s,\chi_3)L(s,\chi_7)\big)^{-1}=\sum_{n\geq1}\frac{h_{\s_0,\chi_3}(n)}{n^s},
	\]
	where $h_{\s_0,\chi_3}=f_{\s_0,\chi_3}\ast\mu\chi_3\ast\mu\chi_7$ and is again a multiplicative function. We have that
	\[
		h_{\s_0,\chi_3}(p^k)=\begin{cases}
					1&\text{if }k=0\\
					-3&\text{if }k=2\text{ and }p\equiv1\mod{4}\\
					-1&\text{if }k=2\text{ and }p\not\equiv1\mod{4}\\
					2&\text{if }k=3\text{ and }p\equiv1\mod{8}\\
					-2&\text{if }k=3\text{ and }p\equiv5\mod{8}\\
					0&\text{otherwise.}
				\end{cases}
	\]
	This implies that $H_{\s_0,\chi_3}(s)$ converges absolutely for $\Re(s)>2/3$, and similarly for
	\[
		H_{\s,\chi_3}(s):=H_{\s_0,\chi_3}(s)\cdot\prod_{p\in\s\setminus\s_0}\left(1+2\chi_3(p)p^{-s}\right)^{-1}.
	\]
	Since $L(s,\chi_3)$ and $L(s,\chi_5)$ both converge absolutely for $\Re(s)>1$ and extend to analytic functions on the entire complex plane, $L(s,f_{\s,\chi_3})$ converges absolutely for $\Re(s)>1$ and extends to an analytic function on $\Re(s)>2/3$. The Wiener--Ikehara Tauberian Theorem applies to yield the result.
\end{proof}

\subsection*{Acknowledgments}
We would like to thank Jamal Hassan, Erik Holmes, Robert Lemke Oliver, Jacob Tsimerman, Ila Varma, and Melanie Matchett Wood for some helpful conversations.

\bibliographystyle{amsalpha}
\bibliography{quartic}

\end{document}